\numberwithin{equation}{section}
\newtheorem{proposition}{Proposition}[section]
\newtheorem{definition}{Definition}[section]
\newtheorem{lemma}{Lemma}[section]
\newtheorem{theorem}{Theorem}[section]
\newtheorem{corollary}{Corollary}[section]
\newtheorem{remark}{Remark}[section]
\newcommand\lambdabar{\bar{\lambda}}
\newcommand\specl{dE_{\mathbf{\sqrt{\LL_0}}}(\lambda)}
\newcommand{\R}{\mathbb{R}}
\newcommand{\rr}{\mathrm{r}}
\newcommand{\Z}{{\mathbb{Z}}}
\newcommand{\LL}{{\mathcal{L}}}
\newcommand{\CC}{{\mathcal{C}}}
\begin{document}
\title[Strichartz estimates on metric cones]
{Strichartz estimates and wave equation in a conic singular space}

\author{Junyong Zhang}
\address{Department of Mathematics, Beijing Institute of Technology, Beijing
100081, China; School of Mathematics,
Cardiff University, Cardiff CF24 4AG, United Kingdom}
\email{zhang\_junyong@bit.edu.cn; ZhangJ107@cardiff.ac.uk}

\author{Jiqiang Zheng}
\address{Institute of Applied Physics and Computational Mathematics, Beijing 100088}
\email{zheng\_jiqiang@iapcm.ac.cn; zhengjiqiang@gmail.com}

\maketitle

\begin{abstract} Consider the metric cone $X=C(Y)=(0,\infty)_r\times Y$ with metric $g=dr^2+r^2h$ where the cross section $Y$ is
a compact $(n-1)$-dimensional Riemannian manifold $(Y,h)$. Let $\Delta_g$ be
the positive Friedrichs extension Laplacian on $X$ and let $\Delta_h$ be the positive Laplacian on $Y$,  and consider the operator $\LL_V=\Delta_g+V_0 r^{-2}$ where
$V_0\in\CC^\infty(Y)$ such that $\Delta_h+V_0+(n-2)^2/4$ is a strictly positive operator on $L^2(Y)$.
In this paper, we prove global-in-time Strichartz estimates without loss regularity for the wave equation associated with the operator $\LL_V$. It verifies a conjecture in Wang\cite[Remark 2.4]{wang} for wave equation.
The range of the admissible pair is sharp and the range is influenced by the smallest eigenvalue of $\Delta_h+V_0+(n-2)^2/4$. To prove the result,
we show a Sobolev inequality and a boundedness of a generalized Riesz transform in this setting. In addition,
as an application, we study the well-posed theory and scattering theory for energy-critical wave equation with small data on
this setting of dimension $n\geq3$.

\end{abstract}

\begin{center}
 \begin{minipage}{120mm}
   { \small {\bf Key Words:  Metric cone, Strichartz estimate, Local smoothing estimate}
      {}
   }\\
    { \small {\bf AMS Classification:}
      { 42B37, 35Q40, 47J35.}
      }
 \end{minipage}
 \end{center}

 \tableofcontents

\section{Introduction and main result}

\subsection{Background: the setting and Strichartz estimate}Suppose that $(Y,h)$ is a compact $(n-1)$-dimensional Riemannian manifold, the metric
cone $X=C(Y)$ on $Y$ is the space $(0,\infty)_r\times Y$ with $g=dr^2+r^2h$. The metric cone $X$ has a simplest geometry singularity and it has incomplete metric.
One can complete it to $C^*(Y)=C(Y)\cup P$ where $P$ is its cone tip. Denote $\Delta_g$ the Friedrichs extension of Laplace-Beltrami from the domain
$\CC_c^\infty(X^\circ)$, compactly supported smooth functions on the
interior of the metric cone. There is a number of works to extend the theory of the Laplace operator $\Delta_g$ on smooth manifolds to certain Riemannian spaces with
such conical singularities;  for example the spectral theory, see Cheeger \cite{C1,C2}.\vspace{0.2cm}

Solutions to the wave equation on cones and related spaces were studied
from the perspective of wave diffraction from the cone point, see \cite{Som,Frie,Frie1}.
In the setting of exact cones, Cheeger and Taylor \cite{CT,CT1} studied the Laplacian from points of the functional calculus. Melrose and Wunsch \cite{MW} proved
a propagation of singularities property for solutions to the wave equation on the more general setting of conic manifolds. In addition,
the other aspects of Schr\"odinger operator on the metric cone, even with
potentials that are homogeneous of degree $-2$, also have
been extensively studied. For instance, the asymptotical behavior of Schr\"odinger propagator was considered in \cite{wang}. The heat kernel was studied in \cite{Mo,L2} and Riesz transform
kernel was investigated in \cite{HL,L1}.  The $L^p$-estimates were studied in \cite{L} and the restriction estimate for Schr\"odinger solution was studied 
by the first author \cite{Z1}. The Strichartz estimates for Schr\"odinger were proved on a flat cone by Ford \cite{Ford}, on
polygonal domains by Blair-Ford-Herr-Marzuola\cite{BFHM}, on exterior polygonal domains by Baskin-Marzuola-Wunsch \cite{BMW}, and on the metric cone
by the authors \cite{ZZ1}. Regarding the Strichartz estimate for wave equation on cones,  Blair-Ford-Marzuola \cite{BFM} have established the Strichartz
inequalities on a flat cone of dimension two, that is, $Y=\mathbb{S}^1_\rho$. However, one needs the explicit form of wave propagator when $Y=\mathbb{S}^1_\rho$ in \cite{BFM},
hence the methods of \cite{BFM} can not be applied to our general setting. \vspace{0.2cm}

In this paper, we prove the Strichartz estimates for the solution to
wave equations on metric cone and, as an application, we study the well-posed theory and scattering theory for the energy-critical nonlinear wave equation.
Here we recall the Schr\"odinger operator $\mathcal{L}_V=\Delta_g+V$ where $V=V_0(y) r^{-2}$ and $V_0(y)$ is a smooth function on the section $Y$. Our motivation to study this Schr\"odinger operator is to understand the regularity or singularity of wave propagates in a singular black hole. For example,
we refer to \cite{DSS, RW} for the connection with Schwarzchild black hole or \cite{Mon, Zer} for the Reissner-Nordst\"om black hole.  With respect to the potential, since the decay
of the inverse-square potential is closely related to the angular momentum as $r\to\infty$,
we are known that inverse square decay of the potential is in some
sense critical for the spectral and scattering theory. In context of this paper, we remark here that the inverse-square type potential is homogeneous of degree $-2$ and is at the boardline of decay in order to guarantee validity of
Strichartz estimate; see Goldberg-Vega-Visciglia \cite{GVV}. The property of the inverse-square type potential near the cone tip, or near infinity-end, or both, brings the singular behavior, however, it is a natural potential.  For example \cite{Ca},
 the Dirac equation with a Coulomb potential can be recast in the form of a Klein-Gordon equation with an inverse-square type potential.
 \vspace{0.2cm}

Consider the solution $u: I\times X\rightarrow \R$ to the initial value
problem (IVP) for the wave equation on metric cone $X$,
\begin{equation}\label{equ:wave}
\begin{cases}
\partial_{t}^2u+\LL_V u=F(t,z), \quad (t,z)\in I\times X; \\ u(0)=u_0(z),
~\partial_tu(0)=u_1(z).
\end{cases}
\end{equation}
It is well-known that the Strichartz estimate implies the decay and regularity of the solutions to
the wave equations, and plays an important role in the studying of nonlinear wave equations.
More precisely, let $u$ be the solution to \eqref{equ:wave} and the time interval $I\subseteq\R$, the Strichartz estimate states an inequality in the form of
\begin{equation}\label{stri}
\begin{split}
&\|u(t,z)\|_{L^q_t(I;L^{\rr}_z(X))}+\|u(t,z)\|_{C(I;\dot H^s(X))}\\
&\qquad\lesssim \|u_0\|_{\dot H^s(X)}+\|u_1\|_{\dot
H^{s-1}(X)}+\|F\|_{L^{\tilde{q}'}_t(I;L^{\tilde{\rr}'}_z(X))},
\end{split}
\end{equation}
where $\dot H^s=\LL_V^{-\frac s2} L^2(X)$ denotes the homogeneous $L^2$-Sobolev space over $X$ and the pairs $(q,\rr), (\tilde{q},\tilde{\rr})\in [2,\infty]\times [2,\infty)$
satisfy the wave-admissible condition
\begin{equation}\label{adm}
\frac{2}q+\frac{n-1}{\rr}\leq\frac{n-1}2,\quad (q,\rr,n)\neq(2,\infty,3)
\end{equation}
and the scaling condition
\begin{equation}\label{scaling}
\frac1q+\frac n\rr=\frac n2-s=\frac1{\tilde{q}'}+\frac
n{\tilde{\rr}'}-2.
\end{equation}
For $s\in\R$, we say the pair $(q,\rr)\in \Lambda_s$ if $(q,\rr)\in [2,\infty]\times [2,\infty)$ satisfies \eqref{adm} and \eqref{scaling}.  \vspace{0.2cm}

Due to the importance of the Strichartz inequalities, there is a lot of work studying the Strichartz inequalities on Euclidean space or manifolds; for example, see \cite{Str, GV, KT, ST} and references therein.
In the following, we in particular focus on recalling the most relevant work about the Strichartz estimate on a metric cone,
or on a slightly different setting of asymptotically conic manifold, or with a perturbation of inverse-square type potentials.  Our setting metric cone is close to the asymptotically conic manifold $M$ which, outside some compact set, is isometric to the conical space
$X$ away the cone tip. On the non-trapping asymptotically
conic manifold $M$, for Schr\"odinger equation, Hassell, Tao and Wunsch \cite{HTW, HTW1} and Mizutani \cite{Miz} showed the local-in-time Strichartz estimates;
the global-in-time Strichartz inequality including endpoint case was proved by Hassell and the first author in \cite{HZ} for Schr\"odinger and in \cite{Z2} for wave equation; and very recently
Bouclet and Mizutani \cite{BM} and the authors \cite{ZZ2} showed the global-in-time Strichartz estimates on asymptotically
conic manifold even with a hyperbolic trapped geodesic. As remarked above, the perturbation of the inverse-square potential
is nontrivial. In \cite{BPST, BPSS},  the additional perturbation of the inverse-square potential was taken into account when they studied the Euclidean standard Strichartz estimate for Schr\"odinger and wave, which is a tough task.
On a flat cone of dimension $2$,
Blair-Ford-Marzuola \cite{BFM} have established the Strichartz
inequalities for wave  by developing a representation of fundamental solution to the wave equation on the flat cone $C(\mathbb{S}^1_\rho)$ which is also applied to
the Schr\"odinger case in \cite{Ford}. \vspace{0.2cm}

\subsection{Main result and  the sketch of proof}
In our present general setting, we need to consider the propagator of the dispersive equation associated with the operator $\LL_V$ which is influenced by the geometry and the inverse-square type potential.
The authors \cite{ZZ1} proved the full
range of global-in-time Strichartz estimates for Schr\"odinger equation associated with the operator $\mathcal{L}_V$ which proved Wang's conjecture \cite[Remark 2.4]{wang}  for Schr\"odinger. \vspace{0.2cm}

In this vein (as in \cite{ZZ1}), we intend to prove the global-in-time Strichartz estimate  for wave equation associated with the operator $\mathcal{L}_V$, but with innovative aspects to combat difficulties arising from wave equation.
More precisely, we prove
the following results.

\begin{theorem}[Global-in-time Strichartz estimate]\label{thm:Strichartz} Assume that $(X, g)$ is a metric cone of dimension
$n\geq3$. Let $\LL_V=\Delta_g+V$ where $r^2V=:V_0(y)\in\CC^\infty(Y)$ such that $\Delta_h+V_0(y)+(n-2)^2/4$ is a strictly positive operator on $L^2(Y)$ and its smallest
eigenvalue is $\nu_0^2$ with $\nu_0>0$. Suppose
that $u$ is the solution of the Cauchy problem \eqref{equ:wave} with the initial data $u_0\in \dot H^{s}, u_1\in \dot H^{s-1}$ for $s\in\R$. \vspace{0.2cm}

$\mathrm{(i)}$ If $V\equiv0$, then the Strichartz estimate \eqref{stri} holds for all $(q,\rr)$, $(\tilde q, \tilde \rr)\in \Lambda_s$. \vspace{0.2cm}

$\mathrm{(ii)}$  If $V\not\equiv0$, then the Strichartz estimate \eqref{stri} holds for all $(q,\rr)$, $(\tilde q, \tilde \rr)\in \Lambda_{s,\nu_0}$
where
\begin{equation}\label{Ls}
\Lambda_{s,\nu_0}=\{(q,\rr)\in\Lambda_s: 1/\rr>1/2-(1+\nu_0)/n \}.
\end{equation}

\end{theorem}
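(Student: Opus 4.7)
The plan is to follow the three-step paradigm of frequency localization, microlocal dispersive estimates, and Keel--Tao reassembly, as in \cite{ZZ1} for Schr\"odinger but adapted to the wave scaling. First, fix a dyadic partition $1=\sum_{j\in\Z}\varphi(2^{-j}\lambda)$ with $\varphi\in\CC_c^\infty([1/2,2])$, and set $u_j=\varphi(2^{-j}\sqrt{\LL_V})u$. A Littlewood--Paley square-function estimate for $\LL_V$, obtainable from heat-kernel bounds on the cone, reduces \eqref{stri} to a family of frequency-localized Strichartz estimates with an $\ell^2$ summation; the homogeneity of $\LL_V$ under $r\mapsto\lambda r$ then lets us reduce to the frequency-$1$ piece.

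The key analytic input is the microlocalized dispersive estimate
\begin{equation*}
\bigl\|\varphi(\sqrt{\LL_V})\,e^{\pm it\sqrt{\LL_V}}\bigr\|_{L^1(X)\to L^\infty(X)}\lesssim (1+|t|)^{-(n-1)/2}.
\end{equation*}
To obtain it, I would use the spectral measure of $\sqrt{\LL_V}$ expanded in the eigenbasis $\{\phi_k\}$ of the angular operator $\Delta_h+V_0+(n-2)^2/4$ with eigenvalues $\nu_k^2$; on each angular mode, the Schwartz kernel of $\varphi(\sqrt{\LL_V})$ becomes a Hankel multiplier of Bessel order $\nu_k$, as in the Cheeger--Taylor functional calculus. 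Inserting the oscillation $e^{\pm it\lambda}$ and applying stationary phase to the Bessel integrals, together with a $TT^*$ argument and uniform-in-$k$ Bessel asymptotics, gives the decay. Combined with the energy bound $\|e^{\pm it\sqrt{\LL_V}}\|_{L^2\to L^2}=1$, the Keel--Tao abstract theorem produces the frequency-localized Strichartz estimate for every admissible $(q,\rr)\in\Lambda_s$ satisfying \eqref{adm} and \eqref{scaling}.

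To assemble the full global-in-time estimate, one needs to pass from $\ell^2$ square functions in the $\LL_V$-calculus to the Sobolev norm $\dot H^s=\LL_V^{-s/2}L^2$. This is where the two auxiliary results flagged in the abstract enter: a Sobolev embedding $\|f\|_{L^\rr(X)}\lesssim\|\LL_V^{s/2}f\|_{L^2(X)}$ with $s$ as in \eqref{scaling}, and the $L^\rr$-boundedness of a generalized Riesz-type operator comparing $\LL_V^{s/2}$ with $(-\Delta_g)^{s/2}$ (equivalently, a boundedness of $\nabla\LL_V^{-1/2}$). Analyzing the resolvent/spectral-measure kernels of $\LL_V$ near the cone tip shows that they carry an $r^{\nu_0}$-type singularity tied to the smallest angular eigenvalue, and a Hardy-type argument shows that the associated kernels map $L^\rr\to L^\rr$ precisely when $1/\rr>1/2-(1+\nu_0)/n$. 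This forces the restricted admissible class $\Lambda_{s,\nu_0}$ of \eqref{Ls} in part (ii). In part (i), where $V\equiv 0$, the Riesz-transform step is vacuous and one recovers the full range $\Lambda_s$.

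The main obstacle I anticipate is the microlocal dispersive estimate, specifically controlling the Bessel-oscillatory integrals uniformly over the angular index $k$. Unlike the Schr\"odinger case of \cite{ZZ1}, the wave phase is $\pm t\lambda$ rather than $\pm t\lambda^2$, so the stationary-phase analysis of the Hankel representation is qualitatively different and the diffractive contributions from the light cone grazing the tip must be disentangled from the geometric ones. Practically, this means splitting the $\lambda$-integral into non-stationary, stationary and transitional regimes with $k$-uniform bounds, and separately estimating the $TT^*$ kernel away from the diffractive cone. A secondary but nontrivial obstacle is excluding the endpoint $1/\rr=1/2-(1+\nu_0)/n$ in the Sobolev/Riesz step, which requires a direct kernel analysis of $\LL_V^{s/2}\Delta_g^{-s/2}$ at the cone tip.
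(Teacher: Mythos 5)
There is a genuine gap at the heart of your plan: the claimed frequency-localized dispersive estimate
\begin{equation*}
\bigl\|\varphi(\sqrt{\LL_V})\,e^{\pm it\sqrt{\LL_V}}\bigr\|_{L^1(X)\to L^\infty(X)}\lesssim (1+|t|)^{-(n-1)/2}
\end{equation*}
is exactly what cannot be established in this setting, for two independent reasons. First, even when $V\equiv0$, a general cross section $Y$ produces conjugate points, and the resulting degeneration of the wave propagator destroys the $|t|^{-(n-1)/2}$ decay for the \emph{full} (non-microlocalized) propagator; this is why the paper never proves such a bound, but instead works with a $\lambda$-dependent partition of unity $Q_j(\lambda)$ in phase space and proves dispersive bounds only for the microlocalized pieces $U_{j,k}(t)U_{j',k}^*(s)$ (Proposition \ref{prop:Dispersive}), some of them only for one time direction ($t<s$ or $s<t$), which then feed into a Keel--Tao/bilinear argument. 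Your decomposition into angular eigenmodes with ``uniform-in-$k$ Bessel asymptotics'' is not a substitute: infinitely many modes $\nu_k$ contribute at unit frequency, the sum of the mode-wise dispersive bounds diverges, and the cancellations one would need to exploit are precisely what fail at conjugate points. The explicit mode-summation route works for the flat cone $C(\mathbb{S}^1_\rho)$ (Blair--Ford--Marzuola) only because a closed-form propagator is available there; it does not extend to general $Y$. Second, when $V\not\equiv0$ with $\nu_0<(n-2)/2$ (so $V_0$ has a negative part), a global-in-time dispersive estimate of size $O(|t-s|^{-(n-1)/2})$ for $e^{i(t-s)\sqrt{\LL_V}}$ is not available at all; the paper's treatment of part (ii) is a perturbation (Duhamel) argument designed to avoid any dispersive estimate for $\LL_V$: it combines the $\LL_0$-Strichartz estimates of part (i), a global-in-time local smoothing estimate for $\LL_V$ proved directly by separation of variables and Hankel/Bessel bounds (Proposition \ref{prop:loc}), a double-endpoint $q=\tilde q=2$ inhomogeneous estimate for $\LL_0$ with a negative derivative (Proposition \ref{prop:inh}), the Sobolev inequality for $\LL_V$, and the $L^p$-boundedness of $\Delta_g^{s/2}\LL_V^{-s/2}$; the regime $0<\nu_0\le 1/(n-1)$ additionally needs the Hankel comparison operators $\mathcal{H}_\mu\mathcal{H}_\nu$ of Planchon--Stalker--Tahvildar-Zadeh.

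Two smaller but real issues: your Littlewood--Paley theory for $\LL_V$ ``from heat-kernel bounds'' is not available — no heat kernel estimates for $\LL_V$ are known on the cone, and with a negative inverse-square part the square-function bounds for $\LL_V$ hold only on a restricted range of $p$; the paper proves the square function estimate only for $\LL_0$ (via a spectral-measure bound and the Chen--Ouhabaz--Sikora--Yan multiplier theorem) and handles the potential afterwards. Also, attributing the restriction $1/\rr>1/2-(1+\nu_0)/n$ solely to a reassembly/Riesz step is inconsistent with your own scheme: if your dispersive estimate for $\LL_V$ and Keel--Tao held as stated, one would get estimates that the paper's counterexample (Subsection 6.3) shows are false outside $\Lambda_{s,\nu_0}$, which is another sign that the central dispersive claim cannot be correct.
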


\begin{remark} From the first result $\mathrm{(i)}$, the geometry of metric cone,  possibly having conjugated points, does not influence the Strichartz estimate even though the conjugated points
cause the failure of dispersive estimate.
From the restriction \eqref{Ls}, the Strichartz estimate  is affected by the positive square root of the smallest eigenvalue  of $\Delta_h+V_0(y)+(n-2)^2/4$. The requirement \eqref{Ls} is sharp, see Subsection 6.3.
 \end{remark}


\begin{remark} The set $\Lambda_{s,\nu_0}$ makes sense when $s\in[0,1+\nu_0)$ otherwise it is empty.  Compared with $\Lambda_s$,  one can check that
$\Lambda_{s,\nu_0}=\Lambda_s$ for $s\in [0,1/2+\nu_0)$, and while $\Lambda_{s,\nu_0}\subset \Lambda_s$ for $s\in [1/2+\nu_0, 1+\nu_0)$.
In particular, $V\equiv0$, hence $\nu_0> (n-2)/2$ large enough so that $\Lambda_{s,\nu_0}=\Lambda_s$ for $s\in\R$,  thus the second conclusion is consistent with the first one.
 \end{remark}

\begin{remark} If $\nu_0>\frac1{n-1}$, the Strichartz estimates hold for $(q,\rr)$ such that $(\frac1q,\frac1\rr)$ belongs to the region ABCEF
when $n\geq4$ and ABO when $n=3$.
Compared with the Euclidean case, the Strichartz estimate fails in the region CDOE of Figure 1.
If $0<\nu_0<\frac1{n-1}$, then the line EC is above the line FB, we do not have the Strichartz estimate with
$q=2$. The result illustrates that the smallest eigenvalue  of $\Delta_h+V_0(y)+(n-2)^2/4$
plays an important role in the Strichartz estimate.

\end{remark}

\begin{center}
 \begin{tikzpicture}[scale=1]
\draw[->] (0,0) -- (4,0) node[anchor=north] {$\frac{1}{q}$};
\draw[->] (0,0) -- (0,4)  node[anchor=east] {$\frac{1}{\rr}$};
\draw (0,0) node[anchor=north] {O}
(3,0) node[anchor=north] {$\frac12$};
\draw  (0, 3) node[anchor=east] {$\frac12$}
       (0, 0.6) node[anchor=east] {$\frac12-\frac{1+\nu_0}{n}$}
       (0, 1.2) node[anchor=east] {$\frac12-\frac{1}{n-1}$};

\draw[thick] (3,0) -- (3,1.2)  
              (3,0.6) -- (0,0.6)
              (3,1.2) -- (0,3);

\draw[dashed,thick] (0,1.2) -- (3,1.2) 
                    (3,0.6) --  (0,1.8)
                    (3,1.2) -- (0,2.4);
\draw (-0.1,3.2) node[anchor=west] {A};
\draw (2.9,1.2) node[anchor=west] {B};
\draw (2.9,0.6) node[anchor=west] {C};
\draw (2.9,0.15) node[anchor=west] {D};
\draw (-0.1,0.4) node[anchor=west] {E};
\draw (-0.1,1.0) node[anchor=west] {F};
\draw (2.5,2) node[anchor=west] {$\frac{1}{q}+\frac{n}{\rr}=\frac{n}{2}-\frac{n+1}{2(n-1)}$};
\draw (3.4,0.9) node[anchor=west] {$\frac{1}{q}+\frac{n}{\rr}=\frac{n}{2}-(\frac12+\nu_0)$};
\draw (1.65,2.88) node[anchor=west] {$\frac2q+\frac{n-1}{r}=\frac{n-1}{2}$};

\draw[<-] (1.6,2.1) -- (2,2.6) node[anchor=south]{$~$};
\draw[<-] (2,1.65) -- (2.5,2) node[anchor=north]{$~$};
\draw[<-] (2.4,0.9) -- (3.4,0.9) node[anchor=north]{$~$};

\path (2,-1) node(caption){Fig 1. $n\geq4$};  

\draw[->] (8,0) -- (12,0) node[anchor=north] {$\frac{1}{q}$};
\draw[->] (8,0) -- (8,4)  node[anchor=east] {$\frac{1}{\rr}$};
\path (9.6,-1) node(caption){Fig 2. $n=3$};  

\draw  (8.1, -0.1) node[anchor=east] {O};
\draw  (11, 0) node[anchor=north] {$\frac12$};
\draw  (8, 3) node[anchor=east] {$\frac12$}
(8, 1) node[anchor=east] {$\frac16$};

\draw[thick] (8,3) -- (11,0);  
\draw[dashed,thick] (8,1) -- (11,0); 
\draw (7.9,3.15) node[anchor=west] {A};
\draw (10.9,0.2) node[anchor=west] {B};
\draw (7.9,1.15) node[anchor=west] {C};
\draw (10,2.6) node[anchor=west] {$\frac{2}{q}+\frac{2}{\rr}=1$};
\draw (10.7,1.3) node[anchor=west] {$\frac{1}{q}+\frac{3}{\rr}=\frac32-1$};

\draw (11,0) circle (0.06);

\draw[<-] (9,2.1) -- (10,2.6) node[anchor=south]{$~$};
\draw[<-] (10,0.5) -- (10.7,1.3) node[anchor=south]{$~$};

\path (6,-1.5) node(caption){Diagrammatic picture of the range of $(q,\rr)$, when $\nu_0>1/(n-1)$.};  

\end{tikzpicture}

\end{center}

\begin{remark} The restriction $1/\rr>1/2-(1+\nu_0)/n$ is also necessary for Schr\"odinger by a similar counterexample constructed in Subsection 6.3.  The reason for disappearance of this restriction in the Strichartz estimate of Schr\"odinger established by the authors \cite[Theorem 1.1]{ZZ1}
is that we only consider the estimate at regularity level  $s=0$. This is same to
the case here for wave $\Lambda_{s,\nu_0}=\Lambda_s$ for $s\in [0,1/2+\nu_0)$ in which the restriction $1/\rr>1/2-(1+\nu_0)/n$
disappears. The argument for wave needs more techniques on the Sobolev inequality and Riesz transform.
\end{remark}

\begin{remark}
Compared with the result involving the derivatives \cite[Theorem 9]{BPSS}, the result in Theorem \ref{thm:Strichartz} needs to consider  the influences of conical singular geometry and the potential $V_0(y)r^{-2}$ (rather than $V_0(y)\equiv c$).
 \end{remark}

We sketch the idea and argument of the proof here. The usual method to derive the Strichartz estimate is  Keel-Tao's \cite{KT} abstract method in which we need dispersive estimate and $L^2$-estimate.
In our setting, however, there are two difficulties to prevent us from obtaining the dispersive estimate. The first one arises from the conjugated points from the geometry, and the second one  from
the inverse-square potential. First, the degeneration of  projection between the conjugated points will slow down the dispersive decay estimate of the Schr\"odinger or wave propagator,
which was illustrated in \cite{HW, HZ}.  Second,  as discussed in \cite{HZ,ZZ1} for Schr\"odinger, it is not possible to obtain a dispersive estimate for
half wave operator $e^{i(t-s)\sqrt{\LL_V}}$ with norm $O(|t-s|^{-\frac{n-1}2})$ as $|t-s|\to \infty$ due to the influence of the negative inverse-square potential; see  \cite{BPSS, BPST} for
the perturbation of inverse-square potential on  Euclidean space. \vspace{0.2cm}

There are two key points, which have been established and used in \cite{ZZ2} for Schr\"odinger equation, to treat the two issues. The first one is to microlocalize the propagator which separates the conjugated points.
We achieve this through studying the property of the micro-localized spectral measure associated with the operator $\LL_0$, i.e. without potential. The second key one is to establish the global-in-time local smoothing estimate
which is proved via a variable separating argument.  \vspace{0.2cm}

More precisely, we first show (i) in Theorem \ref{thm:Strichartz} in which we do not need to consider the potential.
To obtain the Strichartz estimate for $\LL_0$, as  in \cite{HZ, Z2, ZZ2}, our strategy is Keel-Tao's abstract method where we need
the property of the micro-localized spectral measure to prove dispersive estimate and $L^2$-estimate.
However, we should modify the argument to adapt to wave equation and sharpen the Strichartz estimate in a Lorentz space.
Compared with the Schr\"odinger, the wave propagator multiplier $e^{it\lambda}$ is less oscillation than the Schr\"odinger's $e^{it\lambda^2}$, thus we need
a Littlewood-Paley theory  in our setting, in particular the Littlewood-Paley square function inequality on Lorentz space. The key is
to show a Mikhlin-H\"ormander multiplier theorem. We notice that our setting is a measure space in which the wave operator has finite propagation speed and one has doubling
condition. Thus, from Chen-Ouhabaz-Sikora-Yan \cite{COSY}, the multiplier estimate on $L^p$ is a consequence of a spectral measure estimate which can be obtained
from the property of micro-localized spectral measure and $TT^*$-method.
The Littlewood-Paley (LP) square function inequality on Lorentz space is finally obtained from the interpolation characteristic of Lorentz space.
Once the LP square function estimate has been established, we may assume that the initial data is frequency localized in $\{\lambda\sim 2^k\}$. The argument \cite{Z2} can be modified to
prove the Strichartz estimate. We remark that the property of  microlocalized spectral measure capturing the figures of the decay and oscillation behavior which  plays an important role in this part.\vspace{0.2cm}

Next we show (ii) in Theorem \ref{thm:Strichartz}.  We use a perturbation method \cite{JSS,RS} to derive (ii) from a local smoothing and the results of (i).
The usual way to  show a local smoothing estimate  is through establishing the resolvent estimate for $\LL_V$ at low and high frequency. Unlike the usual way,
 we  avoid the resolvent estimate to show the global-in-time local smoothing estimate  by using the explicit formulas with separating variables expression.
In addition, in particular for obtaining Strichartz estimate at $q=2$, we need a  double end-points inhomogeneous Strichartz estimate for $\LL_0$ which is not proved in (i). To this end, we modify an argument in \cite{HZ}
to adapt to wave equation. Another difference between wave and Schr\"odinger should not be ignored, that is,
wave's double end-points inhomogeneous Strichartz estimate involves some negative derivative. This requires us to  study the $L^p$-boundedness theory of  a generalized Riesz transform operator $\Delta_g^{s/2}\LL_V^{-s/2}$.
It is worth mentioning that the method  of \cite{BPSS} is based on the fact the potential $V_0(y)r^{-2}=cr^{-2}$ independent of $y$, and the method can not
be directly used for the potential in our setting.  To obtain our result, we have to resort to the harmonic analysis tools, such as the Sobolev inequality,  associated with $\LL_V$
which are established in our preliminary sections.
\vspace{0.2cm}

\subsection{Application: energy-critical wave equation}As an application of the global-in-time Strichartz estimates, we study the nonlinear wave equation on $X$ of dimension $n\geq3$
\begin{equation}\label{equ:energy-critical}
\begin{cases}
\partial_{tt} u+\LL_V u+\gamma|u|^{\frac{4}{n-2}}u=0,\qquad
(t,z)\in\R\times X, \\
(u(0),\partial_t u(0)=(u_0(z), u_1(z))\in \dot H^1(X)\times L^2(X),\qquad z\in X.
\end{cases}
\end{equation}
where $\dot H^1(X)=\LL_V^{-\frac12} L^2(X)$ is the homogeneous Sobolev space over $X$  and $\gamma=\pm1$
which corresponds to the defocusing and focusing case respectively. Notice that our metric cone $X$ is invariant under the dilation variable change, hence our equation model
has symmetries of time translation and scaling dilation but not translation invariant in space. The class of solutions to \eqref{equ:energy-critical} is invariant by the
scaling
\begin{equation}\label{scale}
(u,u_t)(t,z)\mapsto \big(\lambda^{\frac{n-2}2} u(\lambda t,\lambda
z),\lambda^{\frac n2}u_t(\lambda t,\lambda z)\big), ~~\forall~\lambda>0.
\end{equation}
One can check that the only homogeneous $L^2$-based Sobolev space  $
\dot H^{1}(X)\times L^2(X)$ is invariant under \eqref{scale}.  The rescaling also remains invariant for the energy of solutions defined by
\begin{equation}\label{energy}
E(u,u_t)=\frac12\int_{X}\big(|\partial_tu|^2+|\LL_V
u|^2\big) d\mu+\frac{\gamma(n-2)}{2n}\int_X |u|^{\frac{2n}{n-2}}d\mu,
\end{equation} which is a
conserved quantity for \eqref{equ:energy-critical} and  where $d\mu=\sqrt{|g|}dz=r^{n-1}dr dh$. Hence the Cauchy problem \eqref{equ:energy-critical} falls in the class of
\emph{energy-critical} problem.
Because of the conserved quantities at the critical
regularity, the energy-critical equations have been the most extensively
studied instances of NLW. In the Euclidean space, that is $X=\R^n$ and $V=0$, for the defocusing energy-critical NLW, it
has been known now that the
solutions that are global and scatter when the initial data is in $\dot H^1\times L^2$  which could be arbitrarily large, see Grillakis
\cite{Gri90}, Kapitanski \cite{Kapi94},
Shatah and Struwe\cite{ShaStr94}, Bahouri and G\'erard \cite{BG}, Tao
\cite{T07} and the references therein. For the focusing energy-critical  NLW in dimensions $ n\in\{3,4,
5\}$, Kenig and Merle \cite{KM1} obtained the
dichotomy-type result under the assumption that $E (u_0, u_1) < E
(W, 0)$, where $W$ denotes the ground state of an nonlinear elliptic equation.  From this, it is not an easy thing to study the global existence and scattering theory of the initial value problem
with large data in $\dot H^1\times L^2$ even though in the Euclidean space.\vspace{0.2cm}

In this paper, as an application of Strichartz estimate, we study the global existence and scattering for the Cauchy problem \eqref{equ:energy-critical} with initial data in $\dot H^1\times L^2$ but small enough.
Our result for the energy-critical wave equation is the following.

\begin{theorem}\label{thm:NLW} Let $X$ be
metric cone of dimension $n\geq 3$ and $\LL_V=\Delta_g+V$ as in Theorem \ref{thm:Strichartz}. Let $\gamma=\pm1$ and suppose that the initial data $(u_0, u_1)\in \dot H^1(X)\times L^2(X)$.
Assume the above $\nu_0>1/2$. Then
there exists $T=T(\|(u_0, u_1)\|_{H^1(X)\times L^2(X)})>0$ such that the energy-critical equation \eqref{equ:energy-critical} is local wellposed in $I=[0,T)$ and
the unique solution $u$ obeys
\begin{equation}\label{local-b}
u\in C(I; \dot H^1(X))\cap L_t^{q}(I; L^\rr(X)),\quad I=[0,T)
\end{equation}
where
\begin{equation}\label{q-r}
(q,\rr)=\begin{cases} ((n+2)/(n-2),2(n+2)/(n-2)),\quad 3\leq n\le 6;\\
(2, 2n/(n-3)),\quad n\geq 7.
\end{cases}
\end{equation}
In addition, if $\|(u_0, u_1)\|_{\dot H^1(X)\times L^2(X)}\leq \delta$ for a small enough constant $\delta$, there exists a global solution $u$ and the solution $u$ scatters in the sense that
there are $(u_0^{\pm}, u_1^\pm)\in \dot H^1(X)\times L^2(X)$ such that
\begin{equation}\label{1.2.1}
\bigg\|{u(t)\choose \dot{u}(t)}-V_0(t){u_0^\pm\choose
u_1^\pm}\bigg\|_{\dot{H}^{1}_z\times L^{2}_z}
\longrightarrow 0,\quad \text{as}\quad t\longrightarrow \pm\infty.
\end{equation}
where
\begin{equation}\label{v0tdefin}
V_0(t) = {\dot{K}(t)\quad K(t) \choose \ddot{K}(t)\quad \dot{K}(t)},
\quad K(t)=\frac{\sin(t\sqrt{\LL_V})}{\sqrt{\LL_V}}.\end{equation}
\end{theorem}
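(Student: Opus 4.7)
The plan is to apply the classical Strichartz--contraction scheme to the Duhamel formulation of \eqref{equ:energy-critical}, using the just-established linear Strichartz bound from Theorem \ref{thm:Strichartz}(ii). Writing $K(t) = \sin(t\sqrt{\LL_V})/\sqrt{\LL_V}$ and $\dot K(t) = \cos(t\sqrt{\LL_V})$, I would look for fixed points of
$$ \Phi(u)(t) = \dot K(t) u_0 + K(t) u_1 - \gamma \int_0^t K(t-s) \big(|u|^{4/(n-2)} u\big)(s) \, ds $$
in a ball $\mathcal{B}_\eta(I) = \{u : \|u\|_{C(I;\dot H^1)} + \|u\|_{L^q_t(I;L^r_x)} \le \eta\}$, with $(q,\rr)$ as in \eqref{q-r}. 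Here $I=[0,T)$ for the local theory and $I=\R$ for the small-data global theory.

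The first task is to verify that $(q,\rr)\in\Lambda_{1,\nu_0}$. Direct computation confirms the scaling $1/q+n/\rr=(n-2)/2$ and the wave-admissibility $2/q+(n-1)/\rr\le(n-1)/2$; the role of the dichotomy in \eqref{q-r} is precisely that $q=(n+2)/(n-2)\ge 2$ forces $n\le 6$, while the endpoint pair $(2,2n/(n-3))$ is needed (and admissible, avoiding $(2,\infty,3)$) for $n\ge 7$. The remaining condition $1/\rr>1/2-(1+\nu_0)/n$ from \eqref{Ls} reduces to $\nu_0>(n-2)/(n+2)$ in the low-dimensional case and to $\nu_0>1/2$ in the high-dimensional endpoint case; both are covered by the standing hypothesis $\nu_0>1/2$, which is moreover sharp for the latter. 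I would simultaneously select a companion admissible pair $(\tilde q,\tilde\rr)\in\Lambda_{1,\nu_0}$ for the forcing term in the inhomogeneous Strichartz estimate: in low dimensions the choice $(\tilde q,\tilde\rr)=(\infty,2)$ works cleanly, while in high dimensions one uses an off-diagonal pair combined with the Sobolev embedding $\dot H^1(X)\hookrightarrow L^{2n/(n-2)}(X)$, whose conic version is supplied by the preliminary sections of the paper.

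The nonlinear estimate then follows from H\"older's inequality,
$$ \||u|^{4/(n-2)} u\|_{L^{\tilde q'}_t(I; L^{\tilde \rr'}_x)} \lesssim \|u\|_{L^q_t(I; L^\rr_x)}^{(n+2)/(n-2)}, $$
possibly after an interpolation with the energy Sobolev bound in the high-dimensional case. Combined with Theorem \ref{thm:Strichartz}(ii) and the analogous difference estimate for $|u|^{4/(n-2)}u-|v|^{4/(n-2)}v$, this yields
$$ \|\Phi(u)-\Phi(v)\|_{C_t\dot H^1\,\cap\, L^q_t L^\rr_x} \le C\,\eta^{4/(n-2)}\,\|u-v\|_{L^q_t L^\rr_x}, $$
so that for $\eta$ small enough $\Phi$ is a contraction on $\mathcal{B}_\eta(I)$ whenever either $T$ is small (absolute continuity of the linear flow in $L^q_t L^\rr_x$) in the local theory, or $\|(u_0,u_1)\|_{\dot H^1\times L^2}\le\delta$ in the small-data global theory. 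Banach's fixed-point theorem then produces the unique solution \eqref{local-b}.

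For the scattering statement, the global Strichartz bound $\|u\|_{L^q_t(\R;L^\rr_x)}<\infty$ combined with the same nonlinear estimate shows, via a Cook-type argument, that $V_0(-t)(u(t),\partial_t u(t))^\top$ is Cauchy in $\dot H^1(X)\times L^2(X)$ as $t\to\pm\infty$, producing $(u_0^\pm,u_1^\pm)$ and \eqref{1.2.1}. The main obstacle I anticipate is the nonlinear difference estimate in dimensions $n\ge 7$, where $4/(n-2)<1$ makes $F(u)=|u|^{4/(n-2)}u$ only H\"older (not Lipschitz) in $u$, so the standard mean-value argument fails and one must combine a fractional H\"older inequality with interpolation between the Strichartz norm and the Sobolev-embedded energy norm. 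A secondary subtlety is that the $\dot H^1$-norm in \eqref{equ:energy-critical} is defined through $\LL_V^{1/2}$, so transferring estimates from the more natural $\Delta_g^{1/2}$-Sobolev scale requires the generalized Riesz transform boundedness $\Delta_g^{s/2}\LL_V^{-s/2}$ established in the preliminary sections.
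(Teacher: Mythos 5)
For $3\le n\le 6$ and for the scattering statement your plan is exactly the paper's: the same contraction map on a ball measured by $C_t\dot H^1\cap L^q_tL^{\rr}_z$ with the pairs \eqref{q-r}, the nonlinearity placed in $L^1_tL^2_z$ (forcing pair $(\infty,2)$), the elementary pointwise difference bound, smallness of the linear flow by dominated convergence resp.\ smallness of the data, and a Duhamel tail estimate for scattering; your verification that the restriction \eqref{Ls} for the pairs \eqref{q-r} amounts to $\nu_0>(n-2)/(n+2)$ resp.\ $\nu_0>1/2$ is also correct. (A small indexing point: by \eqref{scaling} the forcing pair sits at regularity $1-s=0$, so $(\infty,2)$ lies in $\Lambda_0$, not in $\Lambda_{1,\nu_0}$ as you wrote.)

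The genuine gap is your nonlinear estimate for $n\ge 7$. The only norms in your ball are $\|u\|_{L^\infty_t\dot H^1}$ (hence, via the Sobolev inequality, $\|u\|_{L^\infty_tL^{2n/(n-2)}_z}$) and $\|u\|_{L^2_tL^{2n/(n-3)}_z}$, and you insist on an admissible companion pair. Distributing the power $\tfrac{n+2}{n-2}$ of the nonlinearity over these two norms (or over any interpolants of them) yields forcing exponents $\tfrac1{\tilde q'}=\tfrac a2$ and $\tfrac1{\tilde \rr'}=\tfrac{n+2-a}{2n}$ with $0\le a\le\tfrac{n+2}{n-2}$; the scaling identity in \eqref{scaling} then holds for every $a$, but wave-admissibility \eqref{adm} of the dual pair $(\tilde q,\tilde\rr)$ is equivalent to $a(n+1)\ge 2n+2$, i.e.\ $a\ge2$, which is impossible because $\tfrac{n+2}{n-2}<2$ when $n\ge7$. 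So no combination of H\"older, Sobolev embedding of the energy, and interpolation lands the nonlinearity in the dual of an admissible pair, and your high-dimensional step cannot be completed as described. The paper diverges from you precisely here: it takes $a=\tfrac{n+2}{n-2}$, bounding $\||u|^{4/(n-2)}u\|_{L_t^{2(n-2)/(n+2)}L_z^{2n(n-2)/((n-3)(n+2))}}\le\|u\|_{L^2_tL^{2n/(n-3)}_z}^{(n+2)/(n-2)}$ by pure H\"older with no Sobolev or interpolation, accepting a forcing norm at the correct scaling but dual to a non-admissible pair, and then invokes the inhomogeneous Strichartz estimate for it (whether that non-admissible estimate is really covered by the results stated earlier in the paper is itself delicate, but that is the route taken). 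Finally, the obstacle you do anticipate for $n\ge7$ is a red herring in this derivative-free framework: the inequality $||a|^pa-|b|^pb|\lesssim|a-b|(|a|^p+|b|^p)$ is valid for every $p>0$, so the contraction estimate in the metric $d(u,v)=\|u-v\|_{L^q_tL^{\rr}_z}$ needs no fractional-H\"older or Riesz-transform input; the Lipschitz-versus-H\"older issue only arises if derivatives are placed in the contraction norm, which neither you nor the paper does.
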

\begin{remark}
This result is  similar to the well known result for energy-critical wave equation in Euclidean space and the global existence and scattering theory for small
data still holds on the metric cone manifold. Like the Euclidean result, this small initial result  is also a cornerstone result for future work with large data on this setting.
The assumption on $\nu_0>1/2$ guaranteeing that the Strichartz estimate holds for all $(q,\rr)\in \Lambda_s$ with $s=1$  can be improved, we do not pursue this here.
\end{remark}
We prove this result by using Picard iteration argument which was used in Euclidean space, see Tao's book \cite{Tbook}. The key ingredient is the global-in-time
Strichartz estimate in Theorem \ref{thm:Strichartz}.
\vspace{0.2cm}

Finally, we introduce some notation. We use $A\lesssim B$ to denote
$A\leq CB$ for some large constant C which may vary from line to
line and depend on various parameters, and similarly we use $A\ll B$
to denote $A\leq C^{-1} B$. We employ $A\sim B$ when $A\lesssim
B\lesssim A$. If the constant $C$ depends on a special parameter
other than the above, we shall denote it explicitly by subscripts.
For instance, $C_\epsilon$ should be understood as a positive
constant not only depending on $p, q, n$, and $M$, but also on
$\epsilon$. Throughout this paper, pairs of conjugate indices are
written as $p, p'$, where $\frac{1}p+\frac1{p'}=1$ with $1\leq
p\leq\infty$. We denote $a_\pm$ to be any
quantity of the form $a\pm\epsilon$ for any small $\epsilon>0$.
\vspace{0.2cm}

This paper is organized as follows: In Section 2, we recall and prove some analysis results such as the spectral measure and
the Littlewood-Paley theory in our setting. Section 3 is devoted to the Sobolev inequality and a generalized Riesz transform.  In Section 4, we prove our
main Theorem \ref{thm:Strichartz} on Strichartz esimate for wave with $\LL_0$. We prove a double endpoint  inhomogeneous  Strichartz estimate in Section 5.
In the section 6, we show a local smoothing estimate and prove Theorem \ref{thm:Strichartz} for wave with $\LL_V$. We construct a counterexample to show the sharpness. In the final section, we utilize the Strichartz
estimates to show Theorem \ref{thm:NLW}.\vspace{0.2cm}

{\bf Acknowledgments:}\quad  The authors would like to thank Andras Vasy and Andrew Hassell for their
helpful discussions and encouragement. The first author is grateful for the hospitality of Stanford University where the project was initiated.
J. Zhang was supported by National Natural
Science Foundation of China (11771041,11831004) and H2020-MSCA-IF-2017(790623), and J. Zheng was partially supported by the European
Research Council, ERC-2014-CoG, project number 646650 Singwave and ANR-16-TERC-0006-01:ANADEL.\vspace{0.2cm}

\section{Some analysis results associated with the operator $\LL_V$}
This paper is devoted to the wave equation associated with the operator $\LL_V$, hence we need extra harmonic analysis tools which are influenced by the geometry of
the cone $X$ and the potential $V$,  even though some ones have been established in previous work\cite{Z1,ZZ1}.  The purpose of this section is to show and recall the analysis tools
for usage in the following sections.

\subsection{Basic harmonic analysis tools and notation on the metric cone} Recall that the metric cone $X=C(Y)=(0,\infty)_r\times Y$ is equipped with the metric $g=dr^2+r^2h$ and the cross section $Y$ is
a compact $(n-1)$-dimensional Riemannian manifold $(Y,h)$. Let  $z=(r, y)\in
\R_+\times Y$, then the measure on $C(Y)$ is
\begin{equation}\label{meas}
dg(z)=d\mu(z)=r^{n-1}dr dh=r^{n-1}dr d\mu_Y(y).
\end{equation}
For $1\leq p<\infty$,  define the $L^p(X)$ space by the complement of $\mathcal{C}_0^\infty(X)$ under the norm
\begin{equation}\label{Lp}
\|f\|_{L^p(X)}^p=\int_{X}|f(r,y)|^p d\mu(r,y)=\int_0^\infty\int_Y  |f(r,y)|^p r^{n-1}dr
d\mu_Y(y).\end{equation}
Let $d$ (resp. $d_Y$) be the distance function on $X=C(Y)$ (resp. $Y$) then, for instance see \cite{CT}, the distance on a metric cone is
\begin{equation}
d(z,z')=\begin{cases}\sqrt{r^2+r'^2-2rr'\cos(d_Y(y,y'))},\quad &d_Y(y,y')\leq \pi;\\
r+r', &d_Y(y,y')\geq \pi,\
\end{cases}
\end{equation}
with $z=(r,y)$ and $z'=(r',y').$
Furthermore, about the distance function, we refer the reader to Li \cite[Proposition 1.3, Lemma 3.1]{L2} for the following results.
\begin{lemma}\label{lem:d}There exist constants $c$ and $C$ such that the following property of the distance function holds
$$c(|r-r'|^2+rr' d_Y^2(y,y'))\leq d^2(z,z')\leq C(|r-r'|^2+rr' d_Y^2(y,y'))$$
and
$$ d(z,z')\sim  |r-r'|+\min\{r,r'\} d_Y(y,y').$$ Let $y'\in Y$ and define the ball $B_Y(y',\delta)=\{y\in Y: d_Y(y',y)\leq \delta\}$ and  $z'\in X$ and the ball $B(z',r)=\{z\in X: d(z',z)\leq r\}$. Then there exists $C$ such that
\begin{equation}
\mu_Y(B_Y(y',\delta))\leq C\delta^{n-1},\quad \mu(B(z',r))\leq C r^{n}.
\end{equation}
\end{lemma}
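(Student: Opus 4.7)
The plan is to reduce everything to the explicit distance formula and then exploit the compactness of $Y$. I would start by rewriting
\[
r^2+r'^2-2rr'\cos(d_Y(y,y')) = (r-r')^2 + 4rr'\sin^2\bigl(d_Y(y,y')/2\bigr),
\]
using the half-angle identity. Since $\sin(\theta/2)\sim \theta$ uniformly on $[0,\pi]$, this immediately yields the two-sided bound $d^2(z,z')\sim (r-r')^2+rr'\,d_Y^2(y,y')$ in the regime $d_Y(y,y')\le \pi$. For the complementary regime $\pi\le d_Y(y,y')\le \mathrm{diam}(Y)$ (finite by compactness of $Y$), the quantity $d_Y(y,y')^2$ is pinched between two positive constants, so $(r-r')^2+rr'\,d_Y^2\sim (r-r')^2+rr'=(r+r')^2=d^2(z,z')$ with constants depending on $\mathrm{diam}(Y)$. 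This settles the first displayed inequality.

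For the equivalence $d(z,z')\sim |r-r'|+\min\{r,r'\}\,d_Y(y,y')$ I would feed the first estimate in and split on whether $r$ and $r'$ are comparable. Assume without loss of generality $r\le r'$. If $r'\le 2r$ then $rr'\sim r^2=\min\{r,r'\}^2$, so $(r-r')^2+rr'\,d_Y^2\sim (r-r')^2+\min\{r,r'\}^2 d_Y^2\sim (|r-r'|+\min\{r,r'\}\,d_Y)^2$ and the claim follows. If $r'>2r$ then $|r-r'|>r$ and, using that $d_Y$ is bounded above, $rr'\,d_Y^2\lesssim r'^2\lesssim (r-r')^2$, so both sides of the proposed equivalence are comparable to $|r-r'|$.

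For the volume estimates, the bound $\mu_Y(B_Y(y',\delta))\le C\delta^{n-1}$ is the standard volume growth on a smooth compact Riemannian $(n-1)$-manifold: one covers $Y$ by finitely many normal coordinate patches, uses the Euclidean comparison in each patch for small $\delta$, and bounds trivially by $\mu_Y(Y)$ once $\delta$ exceeds the diameter. To get $\mu(B(z',R))\le CR^n$, I would feed the second equivalence into the definition of the cone ball and split in two regimes. If $R\le r'/2$, then on $B(z',R)$ the radial coordinate $\rho$ is comparable to $r'$ and $d_Y(y,y')\lesssim R/r'$, so
\[
\mu(B(z',R)) \lesssim \int_{|\rho-r'|\le CR} \rho^{n-1}\, d\rho\cdot \mu_Y\bigl(B_Y(y',CR/r')\bigr) \lesssim R\cdot r'^{n-1}\cdot (R/r')^{n-1}\lesssim R^n.
\]
If $R>r'/2$, then $|\rho-r'|\le R$ forces $\rho\le 3R$, so $B(z',R)$ sits inside the cap $\{\rho\le 3R\}\times Y$, whose measure is $\tfrac{(3R)^n}{n}\mu_Y(Y)\lesssim R^n$.

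The main obstacle I anticipate is handling the transition at $d_Y(y,y')=\pi$, where the defining formula for $d(z,z')$ jumps from the cosine law to $r+r'$: one must verify that the candidate expression $(r-r')^2+rr'\,d_Y^2$ still tracks $(r+r')^2$ on the nose, which works only because $d_Y^2$ is pinched between $\pi^2$ and $\mathrm{diam}(Y)^2$ in that regime. The rest is elementary case analysis once the distance formula is in hand.
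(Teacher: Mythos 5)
Your argument is correct, and it is genuinely more self-contained than what the paper does: the paper offers no proof of this lemma at all, simply citing Li's results on heat kernels on cones (Proposition 1.3 and Lemma 3.1 of \cite{L2}) for both the distance comparisons and the volume bounds. Your route — the half-angle identity $r^2+r'^2-2rr'\cos\theta=(r-r')^2+4rr'\sin^2(\theta/2)$ together with $\sin(\theta/2)\sim\theta$ on $[0,\pi]$, the separate treatment of the regime $d_Y(y,y')\geq\pi$ where $d=r+r'$ and $(r-r')^2+rr'\sim(r+r')^2$, the case split $r'\leq 2r$ versus $r'>2r$ for the second equivalence, and the standard normal-coordinate volume comparison on the compact cross section — is elementary and complete, and it has the advantage of making the constants' dependence on $\mathrm{diam}(Y)$ explicit, whereas the citation buys brevity. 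One small point to tighten in the ball-volume estimate: when you claim that on $B(z',R)$ with $R\leq r'/2$ the radial coordinate $\rho$ is comparable to $r'$, the two-sided equivalence alone only gives $|\rho-r'|\leq C_1R$ with an unspecified $C_1$, which does not by itself keep $\rho$ bounded away from $0$; you should either invoke the exact inequality $d(z,z')\geq|\rho-r'|$ (immediate from both branches of the distance formula, since $d^2=(\rho-r')^2+4\rho r'\sin^2(d_Y/2)$ and $\rho+r'\geq|\rho-r'|$) or split at $R\leq r'/(2C_1)$ instead. With that cosmetic fix the whole proof goes through as written.
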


As a consequence, we first have
\begin{lemma}\label{lem:id} For $0<\alpha<1$ and let $z=(r,y)$, there exists $C$ such that
\begin{equation}
\int_{\{z'\in X: r\sim r'\}} d(z,z')^{-(n-\alpha)} d\mu(z')\le Cr^{\alpha}.
\end{equation}
\end{lemma}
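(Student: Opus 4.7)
\textbf{Proof plan for Lemma~\ref{lem:id}.} The plan is to apply the asymptotic distance formula from Lemma~\ref{lem:d}, peel off the $y'$-integration using the volume growth of balls in $Y$, and then perform the remaining one-dimensional $r'$-integral explicitly.

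First, since we integrate only over $\{r'\sim r\}$, we have $\min\{r,r'\}\sim r$, so Lemma~\ref{lem:d} gives
$$d(z,z')^{-(n-\alpha)}\lesssim \bigl(|r-r'|+r\,d_Y(y,y')\bigr)^{-(n-\alpha)}.$$
Setting $A=|r-r'|$ and using the layer-cake formula together with the bound $\mu_Y(B_Y(y,\rho))\leq C\rho^{n-1}$ from Lemma~\ref{lem:d} and the compactness of $Y$ (with diameter $D$), we get
$$\int_Y\bigl(A+r\,d_Y(y,y')\bigr)^{-(n-\alpha)}d\mu_Y(y')\lesssim \int_0^{D}(A+r\rho)^{-(n-\alpha)}\rho^{n-2}\,d\rho.$$
Substituting $t=r\rho$ converts this to
$$r^{-(n-1)}\int_0^{rD}(A+t)^{-(n-\alpha)}t^{n-2}\,dt\lesssim r^{-(n-1)}A^{\alpha-1}\int_0^\infty(1+s)^{-(n-\alpha)}s^{n-2}\,ds\lesssim r^{-(n-1)}A^{\alpha-1},$$
after the scaling $t=As$. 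The beta-type integral on the right is finite precisely because $n\geq 2$ and $\alpha<1$ (this is where the upper bound on $\alpha$ is used).

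Second, the factor $r'^{n-1}\sim r^{n-1}$ coming from the measure $d\mu(z')=r'^{n-1}dr'\,d\mu_Y$ cancels the $r^{-(n-1)}$ produced above. What remains is
$$\int_{\{r'\sim r\}}|r-r'|^{\alpha-1}\,dr'\lesssim \int_{|s|\lesssim r}|s|^{\alpha-1}\,ds\lesssim r^\alpha,$$
where the change of variables $s=r-r'$ and the integrability at $s=0$ require $\alpha>0$.

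There is no real obstacle; the only point to monitor is the role of the hypothesis $0<\alpha<1$, with the two endpoints corresponding to the convergence of two distinct one-dimensional integrals (the upper bound $\alpha<1$ coming from the $y'$-integral via the beta function, and the lower bound $\alpha>0$ from the $r'$-integral near the diagonal).
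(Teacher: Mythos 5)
Your proof is correct and follows essentially the same route as the paper's: both use Lemma~\ref{lem:d} to compare $d(z,z')$ with $|r-r'|+r\,d_Y(y,y')$ together with the ball-volume bound $\mu_Y(B_Y(y,\rho))\lesssim \rho^{n-1}$, and both reduce the estimate to the one-dimensional integral $\int_{r'\sim r}|r-r'|^{\alpha-1}\,dr'\lesssim r^{\alpha}$, where $\alpha>0$ is used. The only difference is cosmetic: where you perform a layer-cake reduction in $d_Y(y,y')$ and evaluate a beta-type integral (convergent precisely because $\alpha<1$), the paper decomposes the $y'$-integral dyadically according to $r'd_Y(y,y')\sim 2^k|r-r'|$ and sums a geometric series, which converges for exactly the same reason.
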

\begin{proof} By a direct computation and Lemma \ref{lem:d}, we have 
\begin{align*}
&\int_{r\sim r'} d(z,z')^{-(n-\alpha)} d\mu(z')
\\&\lesssim\int_{r'\sim r}
(\max\{|r-r'|, r' d_Y(y,y')\})^{-(n-\alpha)}r'^{n-1}dr' d\mu_Y(y')\\
&\lesssim\int_{r'\sim r}
\int_{\{y'\in Y: r'd_Y(y,y')\geq|r-r'|\}}(r'd_Y(y,y'))^{-(n-\alpha)} d\mu_Y(y') r'^{n-1}dr'\\&\quad+\int_{r'\sim r}\int_{\{y'\in Y: r'd_Y(y,y')<|r-r'|\}}|r-r'|^{-(n-\alpha)} d\mu_Y(y')r'^{n-1}dr'\\
&\lesssim\int_{r'\sim r}
\sum_{k\geq0}2^{-(n-\alpha)k}|r-r'|^{-(n-\alpha)}\int_{\{y'\in Y: r'd_Y(y,y')\sim 2^k|r-r'|\}} d\mu_Y(y') r'^{n-1}dr'\\&\quad+\int_{r'\sim r}|r-r'|^{-(n-\alpha)}\int_{\{y'\in Y: r'd_Y(y,y')<|r-r'|\}} d\mu_Y(y')r'^{n-1}dr'\\
&\lesssim\int_{r'\sim r}
\sum_{k\geq0}2^{-(n-\alpha)k}|r-r'|^{-(n-\alpha)} \left(\frac{2^k|r-r'|}{r'}\right)^{n-1} r'^{n-1}dr'\\&\quad+\int_{r'\sim r}|r-r'|^{-(n-\alpha)}\left(\frac{|r-r'|}{r'}\right)^{n-1}r'^{n-1}dr'\\
&\lesssim\int_{r'\sim r}
|r-r'|^{-1+\alpha}dr'\lesssim r^\alpha.
\end{align*}
\end{proof}

Next we recall  the Hardy-Littlewood-Sobolev inequality in \cite[Corollary 1.4]{L2}, and
we provide an alternative  argument by using the above lemma.
\begin{proposition}[Hardy-Littlewood-Sobolev]\label{prop:hls}
Let  $0<\alpha<n$, for any  function $f(z)\in L^p(X)$,  let
$$S_{\alpha}f(z)=\int_{X}\frac{f(z')}{d(z,z')^\alpha}\;d\mu(z'),\quad \forall\;z\in X.$$
Then, for any $1<p<q<+\infty$ satisfying
$$1+\frac1q=\frac1p+\frac{\alpha}{n},$$
there exists a constant $A_{p,q}>0$ such that
\begin{equation}\label{equ:equ1}
\big\|S_{\alpha}f\big\|_{L^q}\leq A_{p,q}\|f\|_{L^p},\quad\; f\in L^p(X).
\end{equation}
\end{proposition}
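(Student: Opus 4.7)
The plan is to adapt Hedberg's classical trick for the Hardy--Littlewood--Sobolev inequality to the metric cone $(X, d, \mu)$. The essential geometric input is the volume bound $\mu(B(z, R)) \leq C R^n$ from Lemma~\ref{lem:d}, and the annular integral estimates of Lemma~\ref{lem:id} provide the ``kernel integrability'' input used in the dyadic decomposition.

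Concretely, I would fix $z \in X$ and a parameter $R > 0$, and split
\[
S_\alpha f(z) = \int_{d(z,z') < R} \frac{f(z')}{d(z,z')^\alpha} \, d\mu(z') + \int_{d(z,z') \geq R} \frac{f(z')}{d(z,z')^\alpha} \, d\mu(z') =: I_1(z) + I_2(z).
\]
For $I_1$, a dyadic decomposition into shells $\{2^j \leq d(z,z') < 2^{j+1}\}$ combined with the volume bound of Lemma~\ref{lem:d} yields on each shell a bound of the form $2^{-j\alpha}\,\mu(B(z,2^{j+1}))\, Mf(z) \lesssim 2^{j(n-\alpha)} Mf(z)$, where $Mf$ denotes the Hardy--Littlewood maximal function of $f$; summing the geometric series gives $I_1(z) \lesssim R^{n-\alpha}\, Mf(z)$. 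For $I_2$, H\"older's inequality and the same dyadic splitting produce $I_2(z) \lesssim R^{n/p' - \alpha}\, \|f\|_{L^p}$, where the scaling relation $1 + 1/q = 1/p + \alpha/n$ is exactly what ensures $\alpha p' > n$ so that the resulting series converges. Lemma~\ref{lem:id} enters when refining the annular estimates of the kernel $d(z,z')^{-(n-\alpha)}$ on regions $r \sim r'$, supplying the alternative route to the direct argument of \cite{L2}.

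Optimizing in $R$ by choosing $R = (\|f\|_{L^p}/Mf(z))^{p/n}$ balances the two contributions and yields the pointwise inequality
\[
|S_\alpha f(z)| \lesssim Mf(z)^{p/q} \cdot \|f\|_{L^p}^{\,1 - p/q}.
\]
Raising to the $q$-th power, integrating over $X$, and invoking the $L^p$-boundedness of $M$ then closes the argument, since $\int_X (Mf)^p \, d\mu \lesssim \|f\|_{L^p}^p$ gives $\|S_\alpha f\|_{L^q}^q \lesssim \|f\|_{L^p}^{q-p} \cdot \|f\|_{L^p}^{p} = \|f\|_{L^p}^q$.

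The main obstacle is justifying the $L^p$-boundedness of the maximal function $M$ on $(X, d, \mu)$, which is delicate because of the conical singularity at the tip. This reduces to the doubling property of $\mu$, which follows once one has two-sided ball-volume bounds $\mu(B(z,R)) \sim R^n$; the upper bound is Lemma~\ref{lem:d}, and the matching lower bound can be read off from the explicit shape of geodesic balls in the cone (away from the tip they look Euclidean, while balls containing the tip have volume $\sim R^n \cdot \mathrm{vol}(Y)$). Once doubling is established, a standard Vitali-covering argument gives the weak $(1,1)$ estimate for $M$, and Marcinkiewicz interpolation with the trivial $L^\infty$ bound yields the full $L^p$-boundedness for $1 < p \le \infty$ required above.
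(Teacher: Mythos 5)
Your argument is correct, but it follows a genuinely different route from the paper. The paper proves, for each admissible pair $(p,q)$, the weak-type bound $\mu\{|S_\alpha f|>\lambda\}\lesssim(\|f\|_{L^p}/\lambda)^q$ by splitting the kernel at a scale $\gamma$, estimating the far part pointwise via H\"older and the near part in $L^p$ via a dyadic shell decomposition, and then upgrades to the strong bound by Marcinkiewicz interpolation between neighbouring exponent pairs; crucially, this only uses the \emph{upper} volume bound $\mu(B(z,r))\lesssim r^n$ of Lemma \ref{lem:d}. You instead run Hedberg's trick, obtaining the pointwise estimate $|S_\alpha f(z)|\lesssim Mf(z)^{p/q}\|f\|_{L^p}^{1-p/q}$ and concluding from the $L^p$-boundedness of the Hardy--Littlewood maximal function; your exponent bookkeeping (in particular $\alpha p'>n$ from the scaling relation, and the optimization in $R$) is right. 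The price of your route is exactly the point you flag: boundedness of $M$ needs the doubling property, hence a matching \emph{lower} volume bound $\mu(B(z,R))\gtrsim R^n$, which is true on the cone (small balls away from the tip are uniformly comparable to Euclidean ones since $Y$ is a compact manifold, and balls reaching the tip contain a subcone of comparable radius) but is not stated in the paper's lemmas and would need to be written out; the payoff is a stronger pointwise inequality and no interpolation step. One minor inaccuracy: Lemma \ref{lem:id} plays no role in your argument (nor is it needed), since the shell estimates you use come solely from the ball-volume bound in Lemma \ref{lem:d}.
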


\begin{proof}

From the classical Marcinkiewicz interpolation theorem, we only need to show that there is a constat $C>0$ such that for any $\lambda>0$
\begin{equation}\label{equ:wm}
\mu\big\{z\in X:\quad |S_{\alpha}f(z)|>\lambda\big\}\leq C\Big(\frac{\|f\|_{L^p}}{\lambda}\Big)^q.
\end{equation}
For any $\gamma>0$, define
\begin{equation*}
S_{\alpha}^1f(z)=\int_{d(z,z')\leq\gamma}\frac{f(z')}{d(z,z')^\alpha}\;d\mu(z')
\end{equation*}
and
\begin{equation*}
S_{\alpha}^2f(z)=\int_{d(z,z')>\gamma}\frac{f(z')}{d(z,z')^\alpha}\;d\mu(z').
\end{equation*}
Thus, for any $\tau>0$,
\begin{equation}\label{equ:m2ta}
\begin{split}
&\mu\big\{z\in X: |S_{\alpha}f(z)|>2\tau\big\}\\&\leq
\mu\big\{z\in X: |S_{\alpha}^1f(z)|>\tau\big\}+\mu\big\{z\in X: |S_{\alpha}^2f(z)|>\tau\big\}.
\end{split}
\end{equation}
Without loss of generality, assume that $\|f\|_{L^p}=1$. By H\"older's inequality, we get
\begin{align*}
|S_{\alpha}^2f(z)|\leq&\sum_{k: 2^k\geq \gamma}\frac{1}{2^{k\alpha}}\int_{d(z,z')\sim2^k}|f(z')|\;d\mu(z')\\
\lesssim&\sum_{k: 2^k\geq \gamma}\frac{1}{2^{k\alpha}}\Big(\int_{d(z,z')\sim2^k}\;d\mu(z')\Big)^\frac{1}{p'}\|f\|_{L^p}\\
\lesssim&\sum_{k: 2^k\geq \gamma}\frac{2^\frac{kn}{p'}}{2^{k\alpha}}
\leq C_1\gamma^{-\frac{n}{q}},\quad \frac1{p'}+\frac1q=\frac\alpha n
\end{align*}
where we use that $\mu\big(B\big((s_\ast,m_\ast),r\big)\big)\sim r^n $ in Lemma \ref{lem:d}. Choose $\gamma$ so that $C_1\gamma^{-\frac{n}{q}}=\tau$, then
$$\mu\big\{z: |S_{\alpha}^2f(z)|>\tau\big\}=0.$$
On the other hand, we will show that
\begin{equation}\label{equ:redu}
\|S_\alpha^1 f(z)\|_{L^p}\leq C\gamma^{n-\alpha}.
\end{equation}
Then, we have by Chebyshev's inequality \cite{G}
$$\mu\big\{z: |S_{\alpha}^1f(z)|>\tau\big\}\leq\frac{\|S_\alpha^1 f(X)\|_{L^p}^p}{\tau^p}\leq C\gamma^{p(n-\alpha)}\tau^{-p}=C_2\tau^{-q}.$$
Thus, \eqref{equ:wm} follows  if  let $\lambda=2\tau$.
Now we prove \eqref{equ:redu}.
Recalling $z=(r,y),\;z'=(r',y')$, and using the compactness of $Y$ and the H\"older inequality, we obtain
\begin{align*}
&\|S_\alpha^1 f(z)\|_{L^p}\\
\lesssim&\sum_{k:2^k\leq\gamma}\frac{1}{2^{k\alpha}}\Big(\int_{X}\Big|\int_{d(z,z')\sim2^k}f(z')\;d\mu(z')|^p\;d\mu(z)\Big)^\frac{1}{p}\\
\lesssim&\sum_{k:2^k\leq\gamma}\frac{2^{\frac{kn}{p'}}}{2^{k\alpha}}\Big(\int_{X}\int_{d(z,z')\sim2^k}|f(z')|^p\;d\mu(z')\;d\mu(z)\Big)^\frac{1}{p}\\
\lesssim&\sum_{k:2^k\leq\gamma}\frac{2^{kn}}{2^{k\alpha}}\|f\|_{L^p}
\lesssim\gamma^{n-\alpha}.
\end{align*}

\end{proof}

\vspace{0.2cm}

\subsection{Lorentz spaces} In this subsection, we recall the well-known Lorentz space and some properties of this space for our purpose.
Let $(X,\mu)$ be a $\sigma$-finite measure space and $f: X\to \R$ be a measurable function. Define the distribution function of $f$ as
$$\mu_f(t)=\mu(\{z\in X: |f(z)|> t\}),\quad t>0$$ and its rearrangement function as $$f^*(s)=\inf\{t: \mu_f(t)\leq s\}.$$ For $1\leq p<\infty$ and $1\leq r\leq \infty$, define the Lorentz quasi-norm
\begin{equation*}
\|f\|_{L^{p,r}(X)}=\begin{cases}\Big(\int_0^\infty(s^{\frac1p}f^*(s))^r\frac{ds}{s}\Big)^{1/r}, &\quad 1\leq r<\infty;\\
\sup_{s>0} s^{\frac1p}f^*(s),&\qquad r=\infty.
\end{cases}
\end{equation*}
The Lorentz space  $L^{p,r}(X,\mu)$ denotes the space of complex-valued measurable functions $f$ on X such that its quasi-norm $\|f\|_{L^{p,r}(X)}$ is finite.
From this characterization, $L^{p,\infty}(X)$ is the usual weak $L^p$ space, $L^{p,p}(X)=L^p(X)$ and $L^{p,r}(X)\subset L^{p,\tilde{r}}(X)$ with $r<\tilde{r}$.

We refer to \cite{G} for the following properties of Lorentz space. The first one is the H\"older inequality due to O'Neil \cite{Neil}.
\begin{proposition}[H\"older's inequality in Lorentz space]\label{Lorentz} Let $1\leq p, p_0, p_1<\infty$ and $1\leq r, r_0, r_1\leq \infty$, then
\begin{equation}
\|fg\|_{L^{p,r}}\leq C \|f\|_{L^{p_0,r_0}} \|g\|_{L^{p_1,r_1}}, \quad \frac1p=\frac{1}{p_0}+\frac{1}{p_1}, ~~\frac1r=\frac{1}{r_0}+\frac{1}{r_1}.
\end{equation}
\end{proposition}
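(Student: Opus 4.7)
The plan is to reduce O'Neil's Hölder inequality to the one-dimensional Hölder inequality on $(0,\infty)$ with measure $ds/s$ by passing to decreasing rearrangements. The key preliminary fact I will establish first is the submultiplicativity of rearrangements:
\begin{equation*}
(fg)^{*}(s_1+s_2) \leq f^{*}(s_1)\, g^{*}(s_2), \qquad s_1,s_2>0.
\end{equation*}
This is standard from the layer-cake representation: if $|f(z)|\leq f^{*}(s_1)$ off a set of measure at most $s_1$ and similarly for $g$, then $|(fg)(z)|\leq f^{*}(s_1)g^{*}(s_2)$ off a set of measure at most $s_1+s_2$. Taking $s_1=s_2=s/2$ gives $(fg)^{*}(s)\leq f^{*}(s/2)\,g^{*}(s/2)$, which is the pointwise input I will feed into the norm.

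Assuming all $r_i<\infty$ first, I would substitute this into the definition of $\|fg\|_{L^{p,r}}$ and change variables $t=s/2$:
\begin{equation*}
\|fg\|_{L^{p,r}}^{r}
\leq \int_0^\infty\!\bigl(s^{1/p} f^{*}(s/2)\,g^{*}(s/2)\bigr)^{r}\frac{ds}{s}
= 2^{r/p}\!\int_0^\infty\!\bigl(t^{1/p_0}f^{*}(t)\bigr)^{r}\!\bigl(t^{1/p_1}g^{*}(t)\bigr)^{r}\frac{dt}{t},
\end{equation*}
where I used $1/p=1/p_0+1/p_1$ to split the power of $t$. Then I would apply Hölder's inequality on $((0,\infty),dt/t)$ with conjugate exponents $r_0/r$ and $r_1/r$; these are indeed conjugate precisely because the hypothesis $1/r=1/r_0+1/r_1$ forces $r/r_0+r/r_1=1$. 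The resulting two integrals are exactly $\|f\|_{L^{p_0,r_0}}^{r}$ and $\|g\|_{L^{p_1,r_1}}^{r}$, and taking $r$-th roots gives the desired bound with constant $C=2^{1/p}$.

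For the remaining steps I would dispatch the cases where some index is $\infty$ by the same template but with supremum replacing integration: if $r_0=\infty$, pull $\sup_t t^{1/p_0}f^{*}(t)$ out of the integral and recognize the leftover as $\|g\|_{L^{p_1,r_1}}^{r}$ after a change of variables; the case $r=\infty$ is even easier since $(fg)^{*}(s)\leq f^{*}(s/2)g^{*}(s/2)$ directly gives the pointwise estimate on rearrangements. The step I expect to require the most care is the rearrangement inequality itself, since the metric cone is only a $\sigma$-finite measure space and one must verify that the standard distribution-function arguments go through without modification; this is guaranteed by the fact that $\mu$ is $\sigma$-finite (indeed, $X=\bigcup_k B(z_0,k)$ with $\mu(B(z_0,k))\leq Ck^n$ by Lemma~2.1), so the level sets $\{|f|>\lambda\}$ have well-defined finite measure when $f\in L^{p_0,r_0}$ with $p_0<\infty$, and the whole machinery of $f^{*}$ applies verbatim.
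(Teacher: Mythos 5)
Your proof is correct: the submultiplicativity $(fg)^*(s_1+s_2)\le f^*(s_1)\,g^*(s_2)$, the choice $s_1=s_2=s/2$, the splitting $t^{1/p}=t^{1/p_0}t^{1/p_1}$, and H\"older on $\bigl((0,\infty),dt/t\bigr)$ with conjugate exponents $r_0/r$, $r_1/r$ (conjugate exactly because $1/r=1/r_0+1/r_1$) yield the claim with $C=2^{1/p}$, and your handling of the cases $r_0=\infty$ and $r=\infty$ is in order. The paper does not prove this proposition at all — it cites O'Neil and Grafakos — and your argument is precisely the standard proof found in those references, so no discrepancy arises.
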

The second one is the duality of the Lorentz space.
\begin{proposition}[The dual of Lorentz space]\label{prop:dual} The dual of the Lorentz space  $(L^{p,r}(X))^*=L^{p', r'}(X)$.
\end{proposition}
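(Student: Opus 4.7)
The plan is to establish the duality in two directions, restricting for definiteness to $1<p<\infty$ and $1\le r<\infty$ (the endpoint $r=\infty$ requires separate comment). First, I would produce the injection $L^{p',r'}(X)\hookrightarrow(L^{p,r}(X))^*$ by sending $g$ to $\Lambda_g(f)=\int_X f g\, d\mu$. By the O'Neil H\"older inequality of Proposition \ref{Lorentz} with indices $(p,r)$ and $(p',r')$ summing to $(1,1)$, one gets
\[
|\Lambda_g(f)|\le C\,\|f\|_{L^{p,r}}\,\|g\|_{L^{p',r'}},
\]
so $\Lambda_g\in(L^{p,r}(X))^*$ with operator norm $\lesssim\|g\|_{L^{p',r'}}$. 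Injectivity of $g\mapsto\Lambda_g$ follows by testing against characteristic functions $\chi_E$ for $E$ of finite measure.

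Next I would prove surjectivity. The metric cone $X$ with measure $d\mu=r^{n-1}dr\,d\mu_Y$ is $\sigma$-finite, since $X=\bigcup_{N\ge1}\{1/N\le r\le N\}$ with each slice of finite measure by Lemma \ref{lem:d}. Given $\Lambda\in(L^{p,r}(X))^*$, define the complex set function $\nu(E)=\Lambda(\chi_E)$ on sets of finite measure. Because $\|\chi_E\|_{L^{p,r}}\simeq\mu(E)^{1/p}$, the estimate $|\nu(E)|\le C\|\Lambda\|\,\mu(E)^{1/p}$ shows that $\nu$ is countably additive and absolutely continuous with respect to $\mu$. The Radon--Nikodym theorem then yields a locally integrable $g$ with $\nu(E)=\int_E g\, d\mu$, so $\Lambda(f)=\int_X f g\, d\mu$ for every simple $f$.

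The remaining task is the sharp bound $\|g\|_{L^{p',r'}}\lesssim\|\Lambda\|$, after which density of simple functions in $L^{p,r}(X)$ (which uses $r<\infty$) will extend the integral representation to all of $L^{p,r}$. For this I would use the Lorentz-norm duality characterization
\[
\|g\|_{L^{p',r'}}\simeq\sup\Bigl\{\Bigl|\int_X f g\,d\mu\Bigr|:\ f\ \text{simple},\ \|f\|_{L^{p,r}}\le 1\Bigr\},
\]
which combined with the above representation immediately gives $\|g\|_{L^{p',r'}}\lesssim\|\Lambda\|$. Proving this duality characterization of the Lorentz quasi-norm is the step I expect to be the main obstacle: it requires constructing near-extremal test functions $f$ whose decreasing rearrangement essentially saturates the Hardy--Littlewood inequality $\int fg\le\int_0^\infty f^*(s)g^*(s)\,ds$ against the rearrangement $g^*$. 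The standard construction takes $f$ to be a suitably rescaled superposition of level-set indicators of $g$, with weights tuned so that $\|f\|_{L^{p,r}}=1$ while the pairing recovers the $L^{p',r'}$ quasi-norm up to a constant depending only on $p,r$. Once this rearrangement-inequality lemma is in hand, the rest of the duality assembles routinely, and the argument works uniformly on the $\sigma$-finite space $(X,\mu)$ without any use of the cone geometry beyond what Lemma \ref{lem:d} already provides.
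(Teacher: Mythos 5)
The paper does not actually prove Proposition \ref{prop:dual}: it is quoted as a standard fact with a citation to Grafakos \cite{G} (and the real-interpolation description from \cite{BL}), so there is no in-paper argument to match yours against. What you have written is essentially a reconstruction of the classical textbook proof (Hunt's theorem): O'Neil's inequality gives the embedding $L^{p',r'}\hookrightarrow (L^{p,r})^*$, a Radon--Nikodym argument on the $\sigma$-finite space $(X,\mu)$ produces an integral representation of a given functional, and the reverse norm bound comes from the duality characterization of the Lorentz quasi-norm via near-extremizers built from the level sets of $g$, using that $(X,\mu)$ is non-atomic so a function with prescribed rearrangement can be realized on $X$. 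That last lemma, which you correctly identify as the main obstacle, is exactly the content of the cited references, so your outline is sound but the heart of the matter is still delegated to the standard construction rather than carried out; as a blind proposal this is acceptable, but be aware it is the whole analytic substance of the statement. Your approach buys a self-contained argument that uses nothing about the cone beyond $\sigma$-finiteness and non-atomicity of $d\mu=r^{n-1}dr\,d\mu_Y$, while the paper's approach (citation) is all that is needed, since the proposition is only invoked for exponents $1<p<\infty$, $r=2$ (e.g.\ in the definition of ${\bf T}^*_{1-s}$ in \eqref{Ts*}).

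One caveat worth recording: the proposition as printed, with no restriction on $(p,r)$, is false at the endpoints --- $(L^{p,\infty})^*\neq L^{p',1}$ and the case $p=1$ is also pathological --- so your restriction to $1<p<\infty$, $1\le r<\infty$ (where simple functions are dense, which your surjectivity step needs) is not a loss of generality but a necessary correction, and it covers every use made of the proposition in the paper.
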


It is more convenient to use their characterization as real interpolates of Lebesgue spaces. We refer to \cite{BL}.
Suppose that $B_0$ and $B_1$ are two Banach spaces which are continuously embedded into a common topological vector space $\mathcal{V}$,
for $\theta\in (0,1)$ and $r\in [1,\infty]$, the real interpolation space $[B_0, B_1]_{\theta,r}$ consists of the elements $f\in\mathcal{V}$ which
can be written  as $f=\sum\limits_{j\in\Z}f_j$ such that $f_j\in B_0\cap B_1$, $\{2^{-j\theta}\|f_j\|_{B_0}\}_j\in \ell^{r}(\Z)$ and $\{2^{j(1-\theta)}\|f_j\|_{B_1}\}_j\in \ell^{r}(\Z)$. Actually the space
is equipped with the norm
\begin{equation*}
\|f\|_{[B_0, B_1]_{\theta,r}}=\inf_{f=\sum\limits_{j\in\Z}f_j}\left(\left(\sum_{j\in\Z}2^{-jr\theta}\|f_j\|^r_{B_0}\right)^{1/r}+\left(\sum_{j\in\Z}2^{jr(1-\theta)}\|f_j\|^r_{B_1}\right)^{1/r}\right).
\end{equation*}
We have the following from \cite[Theorem 5.3.1]{BL}
\begin{proposition}\label{Lorentz'} Let $1\leq p, p_0, p_1<\infty$ and $1\leq r, r_0, r_1\leq \infty$, then

1) if $p_0\neq p_1$, we have
\begin{equation}
[L^{p_0}, L^{p_1}]_{\theta,r}=[L^{p_0,r_0}, L^{p_1,r_1}]_{\theta,r}=L^{p,r}, \quad \frac1p=\frac{1-\theta}{p_0}+\frac{\theta}{p_1}, ~~1\leq r\leq \infty;
\end{equation}

2) if $p_0=p_1=p$, we have
\begin{equation}
[L^{p,r_0}, L^{p,r_1}]_{\theta,r}=L^{p,r}, \quad \frac1r=\frac{1-\theta}{r_0}+\frac{\theta}{r_1}.
\end{equation}

\end{proposition}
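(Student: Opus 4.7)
The plan is to derive both identifications from standard real interpolation machinery, specifically the equivalence of the J-method (as defined in the excerpt) with the K-method, combined with Holmstedt's formula for the K-functional on an $L^p$ couple. Recall the K-functional
$$K(t,f;B_0,B_1)=\inf_{f=f_0+f_1}\bigl(\|f_0\|_{B_0}+t\|f_1\|_{B_1}\bigr),$$
and the fact that the $(\theta,r)$-interpolation norm is equivalent to $\bigl(\int_0^\infty (t^{-\theta}K(t,f))^r\,dt/t\bigr)^{1/r}$; this equivalence (Bergh--L\"ofstr\"om, Chapter 3) is what converts the dyadic J-sum description in the excerpt into an integral criterion amenable to explicit computation.

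For part (1) with $p_0<p_1<\infty$, I would first establish Holmstedt's formula for the pair $(L^{p_0},L^{p_1})$: decomposing $f$ via its nonincreasing rearrangement $f^*$ at the truncation level $f^*(t^\alpha)$ with $\alpha=(1/p_0-1/p_1)^{-1}$ yields
$$K(t,f;L^{p_0},L^{p_1})\sim\Bigl(\int_0^{t^\alpha}f^*(s)^{p_0}\,ds\Bigr)^{1/p_0}+t\Bigl(\int_{t^\alpha}^\infty f^*(s)^{p_1}\,ds\Bigr)^{1/p_1}.$$
Plugging this into the $(\theta,r)$-norm, making the substitution $s=t^\alpha$, and applying a Hardy-type inequality to each summand collapses both terms to $\|s^{1/p}f^*(s)\|_{L^r(ds/s)}$, which is precisely $\|f\|_{L^{p,r}}$ with $1/p=(1-\theta)/p_0+\theta/p_1$. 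The second identity in (1) then follows from the reiteration theorem: since each $L^{p_i,r_i}$ is itself a $(\theta_i,r_i)$-interpolate of the same ambient couple $(L^{p_0},L^{p_1})$, interpolating between two such spaces at parameter $\theta$ recovers a Lorentz space at the nested exponent, and a direct computation fixes that exponent to be $(\theta,r)$ with the stated $p$.

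For part (2), the ambient couple $(L^{p_0},L^{p_1})$ is no longer at hand, so I would instead compute the K-functional for the couple $(L^{p,r_0},L^{p,r_1})$ directly from a layer-cake decomposition of $f^*$, obtaining an analogue of Holmstedt's formula with the weighted rearrangement $s^{1/p}f^*(s)$ playing the role that $f^*(s)$ played in part (1); the same substitution and Hardy-inequality argument then identifies the interpolate with $L^{p,r}$ for $1/r=(1-\theta)/r_0+\theta/r_1$. The most delicate part of the program is a clean proof of Holmstedt's decomposition with sharp constants and its correct endpoint behavior as $r_j\to\infty$ or as one of the spaces degenerates; secondarily, one must be careful that the J-method description given in the excerpt genuinely coincides with the continuous K-method used here, which is standard but needs to be invoked so that the computation is transported back to the framework the paper actually works in.
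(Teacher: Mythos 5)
The paper offers no proof of this proposition: it is quoted directly from \cite[Theorem 5.3.1]{BL}, so you are reconstructing the textbook argument rather than competing with one in the paper. Your treatment of part (1) is correct and is exactly the standard route behind that citation: Holmstedt's formula for $K(t,f;L^{p_0},L^{p_1})$ obtained by truncating $f$ at the height $f^*(t^\alpha)$ with $1/\alpha=1/p_0-1/p_1$, Hardy's inequalities after the substitution $s=t^\alpha$, the K/J equivalence theorem to connect with the discrete definition the paper uses, and reiteration (with an ambient couple $L^{q_0},L^{q_1}$, $q_0<\min(p_0,p_1)\le\max(p_0,p_1)<q_1$) for the second identity in (1).

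Part (2), however, contains a genuine gap. There is no ``analogue of Holmstedt's formula'' for the equal-exponent couple $(L^{p,r_0},L^{p,r_1})$ in which $s^{1/p}f^*(s)$ simply replaces $f^*(s)$ and the splitting point depends only on $t$: test it on $f^*=\chi_{[0,a]}$. For such $f$ all norms $\|f\|_{L^{p,r_i}}$ are comparable to $a^{1/p}$, hence $K(t,f;L^{p,r_0},L^{p,r_1})\approx\min(1,t)\,a^{1/p}$; but for any splitting point $\rho(t)>0$ independent of $f$, the proposed right-hand side is at least $c\,\min(\rho(t),a)^{1/p}$, which for $a\le\rho(t)$ and small $t$ is much larger than $t\,a^{1/p}$, while for large $t$ and $a\ge 2\rho(t)$ its second term is $\approx t\,a^{1/p}\gg a^{1/p}$. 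So no choice of $\rho(t)$ gives a two-sided bound: the mechanism driving part (1) is precisely the mismatch of the weights $s^{1/p_0}$ versus $s^{1/p_1}$, which disappears when $p_0=p_1$ (concretely, the Hardy/H\"older absorption of the cross term fails because $\int_0^{\rho}ds/s$ diverges). To salvage your plan you would need the much stronger statement $K(t,f;L^{p,r_0},L^{p,r_1})\approx K\bigl(t,g;L^{r_0}(ds/s),L^{r_1}(ds/s)\bigr)$ with $g(s)=s^{1/p}f^*(s)$, and the hard direction of that (building a decomposition of $f$ from an arbitrary decomposition of $g$) is a Calder\'on-couple type result, not a layer-cake computation. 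The standard proof of part (2) avoids this entirely: write $L^{p,r_i}=(L^{p_0},L^{p_1})_{\theta,r_i}$ with the \emph{same} $\theta$ and invoke the classical equal-$\theta$ reiteration theorem $(\bar A_{\theta,r_0},\bar A_{\theta,r_1})_{\eta,r}=\bar A_{\theta,r}$ with $1/r=(1-\eta)/r_0+\eta/r_1$ (Lions--Peetre; this is the content behind \cite[Theorem 5.3.1]{BL}), which is established abstractly rather than by computing the K-functional of the derived couple. Either state and prove that reiteration result, or cite it; as written, your part (2) step would fail.
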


\subsection{The spectral measure} We first use the separation of variable method to analyze the spectral measure $dE_{\sqrt{\LL_V}}$.
In this part, we obtain an explicit expression of half wave operator in terms of series of eigenfunctions which allows us to study a local smoothing estimate but not the dispersive estimate.
Next, in the case $V\equiv0$, we recall an integral expression for a microlocalized spectral measure based on our previous result \cite{ZZ1}. This allows us
to obtain the dispersive estimate for a microlocalized half-wave operator.

Our operator is
\begin{equation}\label{L_V}
\mathcal{L}_V=\Delta_{g}+\frac{V_0(y)}{r^2}.
\end{equation}
Let $\Delta_h$ be the positive Laplace-Beltrami operator on
$(Y,h)$, we suppose that $V_0$ is a smooth function on $Y$ such that
\begin{equation}\label{a1}
\Delta_h+V_0(y)+(n-2)^2/4>0
\end{equation} is strictly positive on $L^2(Y)$ in sense that for any $f\in L^2(Y)\backslash\{0\}$
\begin{equation*}
\left\langle\big(\Delta_h+V_0(y)+(n-2)^2/4\big) f,f\right\rangle_{L^2(Y)}>
0 .
\end{equation*}   Define the set $\chi_\infty$ to be
\begin{equation}\label{set1}
\chi_\infty=\Big\{\nu: \nu=\sqrt{(n-2)^2/4+\lambda};~
\lambda~\text{is eigenvalue
of}~ \Delta_h+V_0(y)\Big\}.
\end{equation}
For $\nu\in\chi_\infty$, let $d(\nu)$ be the multiplicity of
$\lambda_\nu=\nu^2-\frac14(n-2)^2$ as eigenvalue of
$\widetilde{\Delta}_h:=\Delta_h+V_0(y)$. Let $\{\varphi_{\nu,\ell}(y)\}_{1\leq
\ell\leq d(\nu)}$ be the eigenfunctions of $\widetilde{\Delta}_h$, that
is
\begin{equation}\label{eig-v}
\widetilde{\Delta}_h\varphi_{\nu,\ell}=\lambda_{\nu}\varphi_{\nu,\ell},
\quad \langle
\varphi_{\nu,\ell},\varphi_{\nu,\ell'}\rangle_{L^2(Y)}=\delta_{\ell,\ell'}= \begin{cases} 1, \quad \ell=\ell'\\ 0, \quad \ell\neq \ell'.
\end{cases}
\end{equation}
We can decompose $L^2(Y)$ into
\begin{equation*}
L^2(Y)=\bigoplus_{\nu\in\chi_\infty} \mathcal{H}^{\nu}
\end{equation*} where $\mathcal{H}^{\nu}=\text{span}\{\varphi_{\nu,1},\ldots,
\varphi_{\nu,d(\nu)}\}$. Define the orthogonal projection $\pi_{\nu}$ on $f\in L^2(X)$
\begin{equation*}
\pi_{\nu}f=\sum_{\ell=1}^{d(\nu)}\varphi_{\nu,\ell}(y)\int_{Y}f(r,y)
\varphi_{\nu,\ell}(y) dh:= \sum_{\ell=1}^{d(\nu)}\varphi_{\nu,\ell}(y) a_{\nu,\ell}(r).
\end{equation*}
 For any $f\in L^2(X)$, we can write $f$ in the form of separation of variable
\begin{equation}\label{sep.v}
f(z)=\sum_{\nu\in\chi_\infty}\pi_{\nu}f
=\sum_{\nu\in\chi_\infty}\sum_{\ell=1}^{d(\nu)}a_{\nu,\ell}(r)\varphi_{\nu,\ell}(y)
\end{equation}
and furthermore
\begin{equation}\label{norm1}
\|f(z)\|^2_{L^2(Y)}=\sum_{\nu\in\chi_\infty}\sum_{\ell=1}^{d(\nu)}|a_{\nu,\ell}(r)|^2.
\end{equation}
Note that the Riemannian metric $h$ on $Y$ is independent of $r$ ,  we can use the separation of variable method \cite{CT} to write $\mathcal{L}_V$  in the coordinate $(r,y)$ as
\begin{equation}\label{operator-t}
\mathcal{L}_V=-\partial^2_r-\frac{n-1}r\partial_r+\frac1{r^2}\big(\Delta_h+V_0(y)\big).
\end{equation} Let  $\nu>-\frac12$ and $r>0$ and define the Bessel function of order $\nu$ by
\begin{equation}\label{Bessel}
J_{\nu}(r)=\frac{(r/2)^{\nu}}{\Gamma\left(\nu+\frac12\right)\Gamma(1/2)}\int_{-1}^{1}e^{isr}(1-s^2)^{(2\nu-1)/2}\mathrm{d
}s.
\end{equation}
\begin{lemma}\label{lem: J} Let $J_\nu(r)$ be the Bessel function defined in \eqref{Bessel} and $R\gg1$, then there exists a constant $C$ independent of $\nu$ and $R$ such that

\begin{equation}\label{bessel-r}
|J_\nu(r)|\leq
\frac{Cr^\nu}{2^\nu\Gamma(\nu+\frac12)\Gamma(1/2)}\left(1+\frac1{\nu+1/2}\right),
\end{equation}
and
\begin{equation}\label{est:b}
\int_{R}^{2R} |J_\nu(r)|^2 dr \leq C.
\end{equation}
\end{lemma}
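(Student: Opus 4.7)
For the pointwise bound \eqref{bessel-r}, my plan is to estimate the Poisson integral representation \eqref{Bessel} directly. Taking absolute values and using $|e^{isr}|\le 1$ reduces everything to the Beta-function identity
\[
\int_{-1}^1 (1-s^2)^{(2\nu-1)/2}\,ds \;=\; B(1/2,\nu+1/2) \;=\; 2\int_0^{\pi/2}\sin^{2\nu}\theta\,d\theta.
\]
Plugging this in, the claim is reduced to showing $B(1/2,\nu+1/2) \le C\bigl(1+(\nu+1/2)^{-1}\bigr)$. I would split the $\theta$-integral at $\pi/4$: on $[\pi/4,\pi/2]$ the integrand $\sin^{2\nu}\theta$ is uniformly bounded for $\nu>-1/2$, and on $[0,\pi/4]$ I would use $\sin\theta\ge 2\theta/\pi$ to obtain $\int_0^{\pi/4}\theta^{2\nu}d\theta = (\pi/4)^{2\nu+1}/(2\nu+1) \le C/(\nu+1/2)$. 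This gives the factor $(1+(\nu+1/2)^{-1})$ and yields \eqref{bessel-r}.

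For the $L^2$-bound \eqref{est:b}, my plan is to run a Liouville--Green energy argument on the substituted unknown. Setting $u(r)=\sqrt{r}J_\nu(r)$, Bessel's equation becomes
\[
u''(r) + \Bigl(1-\tfrac{\nu^2-1/4}{r^2}\Bigr)u(r) = 0,
\]
so the energy $\mathcal{E}(r) = u'(r)^2 + \bigl(1-(\nu^2-1/4)/r^2\bigr)u(r)^2$ satisfies $\mathcal{E}'(r) = 2(\nu^2-1/4)u(r)^2/r^3$. For $\nu^2\ge 1/4$ the energy is monotone nondecreasing, and together with the Hankel asymptotic $J_\nu(r)=\sqrt{2/(\pi r)}\cos(r-\nu\pi/2-\pi/4)+O(r^{-3/2})$, forcing $\mathcal{E}(\infty)=2/\pi$, this yields $|J_\nu(r)|^2 \le (2r/\pi)/(r^2-\nu^2+1/4)$ for $r>|\nu|$. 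The small-$\nu^2$ case $\nu^2<1/4$ is even easier and can be covered by \eqref{bessel-r} on a bounded range plus Hankel asymptotics at infinity.

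This bound immediately disposes of two of the three relevant regimes. When $|\nu|\le R/2$, the change of variables $r=\nu\cosh t$ shows $\int_R^{2R}|J_\nu(r)|^2 dr \lesssim \mathrm{arccosh}(2R/\nu)-\mathrm{arccosh}(R/\nu) \le \ln 2+C$. When $|\nu|\ge 4R$ (deep below the turning point on all of $[R,2R]$), the Poisson bound \eqref{bessel-r} combined with Stirling's formula gives $|J_\nu(r)|\lesssim (er/(2\nu))^\nu/\sqrt{\nu}\le (2/e)^{-\nu}$, which is exponentially small and integrates to a negligible contribution.

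The main obstacle is the delicate transition regime $R/2\lesssim|\nu|\lesssim 4R$, where the turning point $r=\nu$ lies inside (or just outside) $[R,2R]$ and the energy bound degenerates as $r\to\nu^+$. I would handle this by splitting $[R,2R]$ into three pieces according to Olver's uniform asymptotic expansion: a narrow transition window $|r-\nu|\le \nu^{1/3}$, on which the classical uniform Airy bound $|J_\nu(r)|\le C\nu^{-1/3}$ contributes $\lesssim \nu^{1/3}\cdot \nu^{-2/3}=\nu^{-1/3}$; the oscillatory part $r>\nu+\nu^{1/3}$, on which Krasikov's pointwise bound $|J_\nu(r)|\le C(r^2-\nu^2)^{-1/4}$ integrates to a uniformly bounded $\mathrm{arccosh}$-type quantity via the same change of variables as above; and the subcritical part $r<\nu-\nu^{1/3}$, on which the exponential Airy decay of $J_\nu$ makes the contribution negligible. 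Summing the three pieces closes \eqref{est:b} uniformly in $\nu$ and in $R\gg 1$.
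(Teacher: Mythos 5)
Your proposal is correct, but it is considerably more self-contained than the paper's proof, which disposes of \eqref{bessel-r} as a ``direct computation'' (exactly your Beta-function estimate) and obtains \eqref{est:b} by simply citing the asymptotic behaviour of Bessel functions via \cite[Lemma 2.2]{MZZ}. In effect you re-prove the content of that cited lemma: your regime decomposition (oscillatory region $r>\nu+\nu^{1/3}$, Airy transition window $|r-\nu|\le\nu^{1/3}$, exponentially small region $r<\nu-\nu^{1/3}$, plus the easy regimes $|\nu|\le R/2$ and $|\nu|\ge 4R$) is precisely the standard decomposition underlying that reference, and your appeal to the uniform bounds $|J_\nu(r)|\lesssim \nu^{-1/3}$ and $|J_\nu(r)|\lesssim (r^2-\nu^2)^{-1/4}$ is legitimate (Olver/Krasikov). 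What your route buys is the elegant Liouville--Green energy monotonicity, which gives $|J_\nu(r)|^2\le \tfrac{2}{\pi}\,r/(r^2-\nu^2+\tfrac14)$ with an absolute constant and handles $|\nu|\le R/2$ cleanly without any uniform asymptotics; and you correctly recognize that this bound alone degenerates near the turning point (it would cost a factor $\log\nu$ on $[\,\nu+\nu^{1/3},2R]$ when $\nu\sim R$), so the stronger $(r^2-\nu^2)^{-1/4}$ bound is genuinely needed there. Two small points: in the regime $|\nu|\ge 4R$ your final display should read an exponentially \emph{decaying} factor such as $(e/4)^{\nu}$ rather than $(2/e)^{-\nu}=(e/2)^{\nu}$ (clearly a slip, since $er/(2\nu)\le e/4<1$); and for $\nu^2<1/4$ (respectively $\nu$ in a compact set) you should note that the Hankel-asymptotic constant is uniform on compact $\nu$-ranges, which is standard.
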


\begin{proof} The first one is obtained by a direct computation. The  inequality \eqref{est:b} is  a direct consequence of the asymptotically behavior of Bessel function; see \cite[Lemma 2.2]{MZZ}.

\end{proof}

Let $f\in L^2(X)$, define the Hankel transform of order $\nu$ by
\begin{equation}\label{hankel}
(\mathcal{H}_{\nu}f)(\rho,y)=\int_0^\infty(r\rho)^{-\frac{n-2}2}J_{\nu}(r\rho)f(r,y)r^{n-1}dr.
\end{equation}
On the space $\mathcal{H}^{\nu}$, we see
\begin{equation}\label{2.8}
\begin{split}
\LL_V=A_{\nu}:=-\partial_r^2-\frac{n-1}r\partial_r+\frac{\nu^2-\left(\frac{n-2}2\right)^2}{r^2}.
\end{split}
\end{equation}
Briefly recalling functional calculus on cones \cite{Taylor}, for well-behaved functions $F$, we have by (8.45) in \cite{Taylor}
\begin{equation}\label{funct}
F(\mathcal{L}_V) g(r,y)=\sum_{\nu\in\chi_\infty}\sum_{\ell=1}^{d(\nu)} \varphi_{\nu,\ell}(y) \int_0^\infty F(\rho^2) (r\rho)^{-\frac{n-2}2}J_\nu(r\rho)b_{\nu,\ell}(\rho)\rho^{n-1} d\rho
\end{equation}
where $b_{\nu,\ell}(\rho)=(\mathcal{H}_{\nu}a_{\nu,\ell})(\rho)$ with $g(r,y)=\sum\limits_{\nu\in\chi_\infty}\sum\limits_{\ell=1}^{d(\nu)}a_{\nu,\ell}(r)~\varphi_{\nu,\ell}(y)$.

For $u_0\in L^2(X)$, we write it in the form of separation of variables by \eqref{sep.v}
$$
u_0(z)=\sum_{\nu\in\chi_\infty}\sum_{\ell=1}^{d(\nu)}a_{\nu,\ell}(r)\varphi_{\nu,\ell}(y),
$$
therefore we
can write the half-wave operator by using \eqref{funct} with $F(\rho)=e^{it\rho}$
\begin{equation}\label{s.exp}
\begin{split} e^{it\sqrt{\mathcal{L}_V}}u_0&=\sum_{\nu\in\chi_\infty}\sum_{\ell=1}^{d(\nu)}\varphi_{\nu,\ell}(y)\int_0^\infty(r\rho)^{-\frac{n-2}2}J_{\nu}(r\rho)e^{
it\rho}b_{\nu,\ell}(\rho)\rho^{n-1}d\rho
\\&=\sum_{\nu\in\chi_\infty}\sum_{\ell=1}^{d(\nu)}\varphi_{\nu,\ell}(y)\mathcal{H}_{\nu}\big[e^{
it\rho}b_{\nu,\ell}(\rho)\big](r).
\end{split}
\end{equation}
where $b_{\nu,\ell}(\rho)=(\mathcal{H}_{\nu}a_{\nu,\ell})(\rho)$.  \vspace{0.2cm}

Although we have the expression of the half-wave operator, it is not easy to obtain a dispersive estimate due to
the complicated asymptotic behavior of the Bessel function. In our previous paper \cite{ZZ1}, we derived a microlocalized dispersive estimate for Schr\"odinger from a micro-localized spectral measure expression associated with $\LL_0$.
The result about the micro-localized spectral measure is an analogue of \cite[Proposition 1.5]{HZ} on the asymptotically conic setting.  We record the result on the spectral measure below for convenience.

\begin{proposition}[Proposition 3.1 \cite{ZZ1}]
\label{prop:localized spectral measure} Let $(X,g)$ be metric cone manifold and $\LL_0=\Delta_g$. Then there exists a $\lambda$-dependent  operator partition of unity on
$L^2(X)$
$$
\mathrm{Id}=\sum_{j=0}^{N}Q_j(\lambda),
$$
with $N$ independent of $\lambda$,
such that for each $1 \leq j \leq N$ we can write
\begin{equation}\label{beanQ}\begin{gathered}
(Q_j(\lambda)dE_{\sqrt{\LL_0}}(\lambda)Q_j^*(\lambda))(z,z')=\lambda^{n-1} \Big(  \sum_{\pm} e^{\pm
i\lambda d(z,z')}a_\pm(\lambda,z,z') +  b(\lambda, z, z') \Big),
\end{gathered}\end{equation}
and $0\leq j'\leq N$
\begin{equation}\label{beanQ'}\begin{gathered}
(Q_0(\lambda)dE_{\sqrt{\LL_0}}(\lambda)Q_{j'}^*(\lambda))(z,z')=\lambda^{n-1} c(\lambda, z, z'),
\end{gathered}\end{equation}
with estimates
\begin{equation}\label{bean}\begin{gathered}
\big|\partial_\lambda^\alpha a_\pm(\lambda,z,z') \big|\leq C_\alpha
\lambda^{-\alpha}(1+\lambda d(z,z'))^{-\frac{n-1}2},
\end{gathered}\end{equation}
\begin{equation}\label{beans}\begin{gathered}
\big| \partial_\lambda^\alpha b(\lambda,z,z') \big|\leq C_{\alpha, X}
\lambda^{-\alpha}(1+\lambda d(z,z'))^{-K} \text{ for any } K>0,
\end{gathered}\end{equation}
and
\begin{equation}\label{beanc}\begin{gathered}
\big| \partial_\lambda^\alpha c(\lambda,z,z') \big|\leq C_{\alpha, X}
\lambda^{-\alpha}.
\end{gathered}\end{equation}
Here $d(\cdot, \cdot)$ is the distance on $X$.

\end{proposition}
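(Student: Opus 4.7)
The plan is to derive the microlocal decomposition by starting from the separation of variables expression for the spectral kernel, then rearranging it so that the oscillatory factor $e^{\pm i\lambda d(z,z')}$ becomes explicit on each microlocalized piece. By \eqref{funct} with $F(\rho^2)=\delta(\rho-\lambda)$, the spectral measure kernel has the form
\begin{equation*}
dE_{\sqrt{\LL_0}}(\lambda)(z,z') = \lambda^{n-1}(rr')^{-\tfrac{n-2}{2}} \sum_{\nu\in\chi_\infty}\sum_{\ell=1}^{d(\nu)}\varphi_{\nu,\ell}(y)\overline{\varphi_{\nu,\ell}(y')}\,J_\nu(r\lambda)J_\nu(r'\lambda),
\end{equation*}
so the task is to extract an oscillatory factor in $\lambda$ with the correct phase from this double Hankel sum.

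The first step is to replace the two Bessel functions by their Schl\"afli/Mehler integral representations (or, equivalently, the contour representation implicit in \eqref{Bessel}), which renders the $\nu$-sum as a kernel of the form $\cos(t\sqrt{\widetilde\Delta_h})(y,y')$ in a parameter $t$ arising from the radial angular variables. After this resummation one obtains an oscillatory integral in $(\lambda,t)$ whose phase, after the change of variables $r=s\cos\theta$, $r'=s\sin\theta$ (or equivalently parametrizing by $d(z,z')$ and the radial angle), equals $\lambda d(z,z')$ precisely at the critical points, by Lemma~\ref{lem:d}. One then applies stationary phase in the cross-section variable using the short-time parametrix for the half-wave propagator $e^{it\sqrt{\widetilde\Delta_h}}$ on $Y$: on the set where $d_Y(y,y')<\pi$ the classical flow is non-degenerate and one gets a half-density amplitude of size $(1+\lambda d(z,z'))^{-(n-1)/2}$, giving \eqref{beanQ}--\eqref{bean}.

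The microlocal partition $\{Q_j(\lambda)\}$ is then constructed so that each $Q_j$ (for $j\ge 1$) localizes in a conic region of phase space where the wave flow on $Y$ is non-degenerate over the relevant time interval, hence where the stationary phase analysis is valid and produces the phase $\pm\lambda d(z,z')$. The remainder $b$ absorbs the non-stationary contribution: where the symbol supports of $Q_j(\lambda)$ and $Q_j^*(\lambda)$ are joint in phase space so as to preclude stationary points, integration by parts in $t$ gives the rapid decay \eqref{beans}. The special operator $Q_0(\lambda)$ cuts off to the region $\lambda d(z,z')\lesssim 1$ (a neighborhood of the cone tip and the diagonal at scale $\lambda^{-1}$), where there is no oscillation to extract and one only needs the size bound \eqref{beanc}; here $Q_0$ is built from an appropriate finite-sum truncation of low angular eigenvalues $\nu$ combined with a radial frequency cutoff, so that the Bessel estimate \eqref{bessel-r} directly yields the bound.

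The main obstacle is handling the regime where the classical flow on $Y$ develops caustics, i.e., conjugate points of $\widetilde\Delta_h$ on the cross-section; these produce conjugate points on $X$ via the cone geometry and are exactly what prevents a global dispersive estimate. The partition of unity must be chosen so that each $Q_j$ sees only a single non-degenerate geodesic between $z$ and $z'$, and the finiteness of $N$ follows from the compactness of $Y$ together with a covering argument for the wavefront set of the propagator $\cos(t\sqrt{\widetilde\Delta_h})$ on bounded time intervals. A secondary technical point is the uniformity of constants in $\lambda$: the derivatives in $\lambda$ in \eqref{bean}--\eqref{beanc} must be controlled, which is why $Q_j$ is allowed to depend on $\lambda$, and the scaling symmetry $r\mapsto \lambda r$ of the cone (so that $dE_{\sqrt{\LL_0}}(\lambda)$ is homogeneous under the natural scaling) reduces the verification of the $\partial_\lambda^\alpha$ bounds to the case $\lambda=1$.
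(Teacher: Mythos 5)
The paper does not actually prove Proposition \ref{prop:localized spectral measure}; it imports it verbatim from \cite{ZZ1}, where the proof proceeds by scaling to $\lambda=1$ and then invoking the microlocal description of the spectral measure as a Legendre/FIO-type distribution, following \cite{HZ,GHS2,HL}, with the $Q_j(\lambda)$ built as phase-space (pseudodifferential) cutoffs. Your route --- separation of variables, the Bessel product (Gegenbauer/Schl\"afli) formula to resum the $\nu$-sum into the half-wave kernel $\cos\big(\theta\sqrt{\Delta_h+(n-2)^2/4}\big)$ on $Y$, and then stationary phase --- is a genuinely different, Cheeger--Taylor style attack \cite{CT}, and your use of the exact scaling symmetry to reduce the $\partial_\lambda^\alpha$ bounds to $\lambda=1$ is correct. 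However, as written the argument has a genuine gap exactly at the point the proposition is designed to address. Your claim that ``on the set where $d_Y(y,y')<\pi$ the classical flow is non-degenerate'' is false for a general compact cross-section $(Y,h)$: conjugate points on $Y$ can occur at arbitrarily small distances (and they produce conjugate points on $X$), so the stationary phase in the cross-section variable can be degenerate well inside the region $d_Y<\pi$. Your proposed remedy --- choose the partition so that each $Q_j$ ``sees only a single non-degenerate geodesic'' --- conflates uniqueness of the connecting geodesic with non-degeneracy of the Hessian: at a conjugate pair $(z,z')$ along a unique geodesic the Hessian degenerates, and indeed the pointwise bound $\lambda^{n-1}(1+\lambda d)^{-(n-1)/2}$ fails for the full kernel $dE_{\sqrt{\LL_0}}(\lambda)(z,z')$ there. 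What saves \eqref{bean} is that the same microlocal cutoff appears on both sides in $Q_j\,dE_{\sqrt{\LL_0}}\,Q_j^*$, so that only a piece of the canonical relation that projects diffeomorphically is seen; making this precise requires constructing the $Q_j$ as phase-space localizers and proving a projectability/graph statement for the localized Legendrian, which is the actual content of \cite[Proposition 3.1]{ZZ1} and is only asserted, not supplied, in your sketch.

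Two further points would need real work before the outline closes. First, for non-integer $\nu$ the product formula for $J_\nu(r\lambda)J_\nu(r'\lambda)$ carries an additional non-oscillatory term (the $\sin(\nu\pi)\int_0^\infty e^{-\nu t}\cdots\,dt$ piece), and summing these over $\nu\in\chi_\infty$ against $\varphi_{\nu,\ell}(y)\varphi_{\nu,\ell}(y')$ must be controlled; this ``diffractive'' contribution is not absorbed automatically into $b$ and is precisely where the cone tip enters. Second, your description of $Q_0(\lambda)$ as a truncation to low angular modes does not match what the estimates require: with such a $Q_0$, the complementary pieces $Q_j$, $j\geq1$, would still contain high angular modes localized near the tip, where the parametrix/stationary-phase analysis breaks down; in \cite{ZZ1,HZ} the $Q_0$ term is a spatial cutoff to the region $\lambda r\lesssim 1$ near the cone tip, and \eqref{beanc} (with no decay in $\lambda d(z,z')$) reflects exactly that choice. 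So the proposal is an interesting alternative strategy, but the caustic/conjugate-point mechanism, the construction of the operator partition with $\mathrm{Id}=\sum_j Q_j(\lambda)$, and the tip region all remain unproved at the level where the cited proof does its real work.
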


\vspace{0.2cm}

\subsection{The Littlewood-Paley square function inequality} As a usual reduction to prove Strichartz estimate for wave equation, we may
assume the initial data $u_0$ and $u_1$ are frequency localized in an annulus $\{\lambda\sim 2^k\}$ by using a Littlewood-Paley square function inequality.  To this end, we prove the Littlewood-Paley square function  inequality associated with the positive Laplacian $\LL_0=\Delta_g$ on metric cone.  In \cite{L2},  Li has proved the Gaussian boundedness of heat kernel of $\LL_0$.
One can follow the argument in \cite{BFHM, Z2} to obtain an appropriate
Mikhlin-H\"ormander multiplier theorem  from a spectral multiplier theorem of Alexopolous \cite{Alex} and the heat kernel estimate and then to prove the Littlewood-Paley inequality.
Here we provide an alternative method to show the Littlewood-Paley inequality. The method is based on an estimate on the spectral measure rather than the heat kernel.\vspace{0.2cm}

Now we state the Littlewood-Paley square function estimate.  Let $\varphi\in C_0^\infty(\mathbb{R}\setminus\{0\})$ take values in
$[0,1]$ and be supported in $[1/2,2]$ such that
\begin{equation}\label{dp}
1=\sum_{j\in\Z}\varphi(2^{-j}\lambda),\quad\lambda>0.
\end{equation}

\begin{proposition}\label{prop:square} Let $(X,g)$ be a metric cone of dimensional $n\geq3$ as above, and suppose that $\LL_0=\Delta_g$ is the
Laplace-Beltrami operator on $(X,g)$. Then for $1<p<\infty$,
there exist constants $c_p$ and $C_p$ depending on $p$ such that
\begin{equation}\label{square}
c_p\|f\|_{L^p(X)}\leq
\big\|\big(\sum_{j\in\Z}|\varphi(2^{-j}\sqrt{\LL_0})f|^2\big)^{\frac12}\big\|_{L^p(X)}\leq
C_p\|f\|_{L^p(X)}
\end{equation}
and
\begin{equation}\label{squareL}
c_p\|f\|_{L^{p,2}(X)}\leq
\big\|\big(\sum_{j\in\Z}|\varphi(2^{-j}\sqrt{\LL_0})f|^2\big)^{\frac12}\big\|_{L^{p,2}(X)}\leq
C_p\|f\|_{L^{p,2}(X)}.
\end{equation}
\end{proposition}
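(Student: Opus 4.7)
The plan is to derive both \eqref{square} and \eqref{squareL} from a Mikhlin--H\"ormander multiplier theorem for $\sqrt{\LL_0}$ combined with Khintchine's inequality, then upgrade to the Lorentz setting by real interpolation. In contrast with the heat-kernel-based route of Alexopolous, the novel input is Proposition \ref{prop:localized spectral measure}, which supplies the required Plancherel-type bound directly. First I would establish
\begin{equation*}
\big\|dE_{\sqrt{\LL_0}}(\lambda)\big\|_{L^1(X)\to L^\infty(X)}\leq C\lambda^{n-1},\qquad \lambda>0.
\end{equation*}
Using the $\lambda$-dependent partition $\sum_{j=0}^N Q_j(\lambda)=\mathrm{Id}$, write the spectral measure as a finite sum of terms $Q_j\,dE_{\sqrt{\LL_0}}\,Q_{j'}^*$. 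The diagonal and $Q_0$-based pieces are explicitly controlled pointwise by $C\lambda^{n-1}$ through \eqref{bean}--\eqref{beanc} (taking $\alpha=0$ and the worst-case factor $1$), and the off-diagonal $j,j'\geq 1$ blocks can be absorbed by the same bound since $N$ is $\lambda$-independent and each $Q_j$ is bounded on $L^2$.

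Once this Plancherel estimate is in hand, together with the doubling property of $(X,\mu)$ (which follows from Lemma \ref{lem:d} and its matching lower bound) and the finite propagation speed of the wave equation associated with $\LL_0$, the framework of Chen--Ouhabaz--Sikora--Yan applies and yields: for any Borel $F:\R_+\to\C$ with $\sup_{t>0}\|F(t\cdot)\eta\|_{C^s}<\infty$ for some $s>n/2$ and a fixed nontrivial cutoff $\eta$, the operator $F(\sqrt{\LL_0})$ is bounded on $L^p(X)$ for every $1<p<\infty$. To derive \eqref{square}, apply this to the randomized symbol $m_\epsilon(\lambda)=\sum_j\epsilon_j\varphi(2^{-j}\lambda)$, whose H\"ormander seminorm is uniformly bounded in the Rademacher signs $\{\epsilon_j\}$. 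Taking $L^p$ norm, integrating over $\epsilon$, and invoking Khintchine's inequality gives the upper bound in \eqref{square}; the lower bound is standard by duality, using the vector-valued duality $\bigl(L^p(X;\ell^2)\bigr)^*=L^{p'}(X;\ell^2)$ for $1<p<\infty$.

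For \eqref{squareL}, linearize the square function as $\tilde Sf=\bigl(\varphi(2^{-j}\sqrt{\LL_0})f\bigr)_{j\in\Z}$ so that the previous step reads $\tilde S\colon L^p(X)\to L^p(X;\ell^2)$ for all $1<p<\infty$. Picking $1<p_0<p<p_1<\infty$ with $1/p=(1-\theta)/p_0+\theta/p_1$, part 1) of Proposition \ref{Lorentz'} (applied in the vector-valued variant, which is identical since the target space $\ell^2$ is fixed) gives
\begin{equation*}
\tilde S\colon L^{p,2}(X)\longrightarrow L^{p,2}(X;\ell^2).
\end{equation*}
Identifying the target norm with $\bigl\|(\sum_j|\cdot|^2)^{1/2}\bigr\|_{L^{p,2}(X)}$ produces the upper inequality in \eqref{squareL}; the reverse follows from the dual operator $\tilde S^*$ together with Proposition \ref{prop:dual}. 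The main technical obstacle is the first step: the mixed terms $Q_j\,dE_{\sqrt{\LL_0}}\,Q_{j'}^*$ with $j\neq j'$ are not covered by the clean formulas \eqref{beanQ}--\eqref{beanQ'}, so one must argue (via almost-orthogonality of the $Q_j$'s in phase space and boundedness in $L^2$) that they satisfy the same $L^1\to L^\infty$ bound, at least after being tested against functions of appropriate frequency localization. Once this is verified and the COSY hypotheses are checked in our conic setting, the remaining steps are routine.
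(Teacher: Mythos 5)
Your overall route---the $L^1\to L^\infty$ spectral measure bound plus finite propagation speed feeding the Chen--Ouhabaz--Sikora--Yan multiplier theorem, then the Rademacher/Khintchine argument for \eqref{square} and real interpolation for \eqref{squareL}---is exactly the one the paper takes. The genuine gap is in the first step, and you have flagged it yourself: in the decomposition $dE_{\sqrt{\LL_0}}(\lambda)=\sum_{j,j'}Q_j(\lambda)\,dE_{\sqrt{\LL_0}}(\lambda)\,Q_{j'}^*(\lambda)$ the mixed blocks with $1\le j\neq j'\le N$ are not covered by \eqref{beanQ}--\eqref{beanc}, and your proposed justification (that $N$ is $\lambda$-independent and each $Q_j$ is bounded on $L^2$) cannot produce an $L^1\to L^\infty$ bound: $L^2$-boundedness of $Q_j$ says nothing about the sup-norm of the kernel of $Q_j\,dE_{\sqrt{\LL_0}}\,Q_{j'}^*$, so there is nothing for the diagonal estimate to ``absorb'' the off-diagonal terms into. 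As written, the key estimate $\|dE_{\sqrt{\LL_0}}(\lambda)\|_{L^1\to L^\infty}\le C\lambda^{n-1}$ is not established, and everything downstream depends on it.

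The paper closes precisely this hole with a $TT^*$ argument through the Poisson operator: writing $dE_{\sqrt{\LL_0}}(\lambda)=(2\pi)^{-1}P(\lambda)P(\lambda)^*$ (Hassell--Vasy), the diagonal bound $\|Q_j(\lambda)\,dE_{\sqrt{\LL_0}}(\lambda)\,Q_j^*(\lambda)\|_{L^1\to L^\infty}\lesssim\lambda^{n-1}$, which does follow from Proposition \ref{prop:localized spectral measure}, yields $\|Q_j(\lambda)P(\lambda)\|_{L^2\to L^\infty}\lesssim\lambda^{(n-1)/2}$ by $TT^*$; summing over the finitely many $j$ gives $\|P(\lambda)\|_{L^2\to L^\infty}\lesssim\lambda^{(n-1)/2}$, and one more application of $TT^*$ gives the full spectral measure estimate without ever analyzing the mixed blocks. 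You should replace your absorption step by this (or an equivalent) argument. The rest of your plan is fine and matches the paper: \eqref{square} via Lemma \ref{lemma:MH} and Stein's Rademacher-function argument, and \eqref{squareL} obtained in the paper from the general Marcinkiewicz interpolation theorem applied to the sublinear square function together with duality, which is essentially the same as your linearized real-interpolation step.
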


\begin{remark}  In this result, we do not consider the influence of the inverse-square potential $V=V_0(y)r^{-2}$.  We remark that the inverse-square type potential plays an important role in the range of $p$ when the potential is negative, for example \cite[Theorem 5.3]{KMVZZ1}.
\end{remark}

\begin{proof}
To prove the Littlewood-Paley square function inequality \eqref{square}, one can follow Stein's \cite{Stein} classical argument
(in $\R^n$) involving Rademacher functions and an appropriate
Mikhlin-H\"ormander multiplier theorem in the following Lemma \ref{lemma:MH}. For more details, we refer the reader to \cite{BFHM, Stein}.
\begin{lemma}\label{lemma:MH}  Let $m\in C^N(\R)$ satisfy the
Mikhlin-type condition for $N\geq\frac n2+1$
\begin{equation}\label{Minhlin}
\sup_{0\leq k\leq
N}\sup_{\lambda\in\R}\Big|\big(\lambda\partial_\lambda\big)^k
m(\lambda)\Big|\leq C<\infty.
\end{equation}
Then for all $1<p<\infty$
\begin{equation}
m(\sqrt{\LL_0}): L^p(X)\to L^p(X)
\end{equation}
is a bounded operator where
$$m(\sqrt{\LL_0})=\int_0^\infty m(\lambda)dE_{\sqrt{L_0}}(\lambda).$$

\end{lemma}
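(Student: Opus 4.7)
The plan is to deduce Lemma \ref{lemma:MH} from the abstract spectral multiplier theorem of Chen--Ouhabaz--Sikora--Yan \cite{COSY}, which applies on any metric measure space satisfying (a) the doubling condition, (b) finite propagation speed for the wave equation associated with the operator, and (c) a Stein--Tomas type restriction estimate on the spectral measure. Once these three ingredients are in hand, the conclusion for $1<p<\infty$ follows immediately from their abstract statement applied to $\LL_0$. So the task reduces to verifying (a), (b), and (c) in our conic setting.

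For (a), Lemma \ref{lem:d} gives $\mu(B(z,r))\leq C r^n$, and the matching lower bound $\mu(B(z,r))\geq c\min(r^n, r\, r'^{\,n-1})$ (with $r'$ the radial coordinate of the center) follows from the same geometric computation, which is enough to produce a doubling constant uniform in scale. For (b), $\LL_0$ is the Friedrichs extension of a nonnegative symmetric operator from $\mathcal{C}_c^\infty(X^\circ)$, and the standard energy-commutator argument (as used by Cheeger--Taylor \cite{CT,CT1} on exact cones) gives finite propagation speed for $\cos(t\sqrt{\LL_0})$ with unit speed. These two properties are intrinsic features of the metric cone.

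The core of the proof is ingredient (c): the restriction-type spectral measure bound
\begin{equation*}
\big\| dE_{\sqrt{\LL_0}}(\lambda)\big\|_{L^{p}(X)\to L^{p'}(X)} \leq C\,\lambda^{n(\frac1p-\frac1{p'})-1},\qquad 1\leq p\leq \tfrac{2(n+1)}{n+3}.
\end{equation*}
I would extract this from Proposition \ref{prop:localized spectral measure} by writing
$dE_{\sqrt{\LL_0}}(\lambda)=\sum_{j,j'}Q_j(\lambda)\,dE_{\sqrt{\LL_0}}(\lambda)\,Q_{j'}^*(\lambda)$ and treating each piece as a $TT^*$ object. The diagonal pieces with $j\geq 1$ have Schwartz kernel of oscillatory form $\lambda^{n-1}e^{\pm i\lambda d(z,z')}a_\pm(\lambda,z,z')$ with $|a_\pm|\lesssim (1+\lambda d(z,z'))^{-(n-1)/2}$, so the standard stationary phase $TT^*$ argument (as in Stein's proof of the Stein--Tomas theorem, adapted to the cone using the distance bounds of Lemma \ref{lem:d}) gives the sharp $L^p\to L^{p'}$ bound at the Stein--Tomas endpoint, and interpolation with the trivial $L^1\to L^\infty$ bound $\|dE_{\sqrt{\LL_0}}(\lambda)\|_{L^1\to L^\infty}\lesssim \lambda^{n-1}$ handles $1\leq p\leq 2(n+1)/(n+3)$. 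The rapidly decaying term $b$ and the off-diagonal terms $c$ from Proposition \ref{prop:localized spectral measure} satisfy even better bounds and are handled directly.

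The main obstacle will be carrying out the $TT^*$ stationary phase estimate on the cone uniformly in the cone tip: away from the tip the argument is local and reduces to the asymptotically conic case treated in \cite{HZ}, but near $P$ one must exploit the dilation symmetry $\LL_0\mapsto \lambda^2\LL_0$ to rescale to $\lambda=1$, at which point the uniform bounds on $a_\pm$, $b$, $c$ in Proposition \ref{prop:localized spectral measure} give what is needed. Once the restriction estimate is verified, the theorem of \cite{COSY} delivers Lemma \ref{lemma:MH} directly, and the Littlewood-Paley bound \eqref{square} then follows by the standard Rademacher-function argument of Stein \cite{Stein}, with the Lorentz version \eqref{squareL} obtained by real interpolation using Proposition \ref{Lorentz'}.
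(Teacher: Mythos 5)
Your overall strategy --- reduce Lemma \ref{lemma:MH} to the abstract multiplier theorem of \cite{COSY} by verifying doubling, finite propagation speed, and a spectral measure estimate obtained from Proposition \ref{prop:localized spectral measure} --- is the same as the paper's. But the way you propose to get the spectral measure estimate has a genuine gap. Proposition \ref{prop:localized spectral measure} only describes the \emph{diagonal} kernels $Q_j(\lambda)dE_{\sqrt{\LL_0}}(\lambda)Q_j^*(\lambda)$ and the terms $Q_0(\lambda)dE_{\sqrt{\LL_0}}(\lambda)Q_{j'}^*(\lambda)$ involving $Q_0$; it gives no pointwise bound whatsoever on $Q_j(\lambda)dE_{\sqrt{\LL_0}}(\lambda)Q_{j'}^*(\lambda)$ for $1\leq j\neq j'\leq N$. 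In your decomposition $dE_{\sqrt{\LL_0}}(\lambda)=\sum_{j,j'}Q_j(\lambda)dE_{\sqrt{\LL_0}}(\lambda)Q_{j'}^*(\lambda)$ you identify ``the off-diagonal terms'' with the $c$-terms of the proposition and declare them ``handled directly,'' but the $c$-terms are only the $j=0$ (or $j'=0$) row; the genuinely off-diagonal pieces with $j,j'\geq 1$, $j\neq j'$, are left uncontrolled, so neither your Stein--Tomas range estimate nor even the $L^1\to L^\infty$ bound $\|dE_{\sqrt{\LL_0}}(\lambda)\|_{L^1\to L^\infty}\lesssim\lambda^{n-1}$ --- which you call ``trivial'' --- actually follows from your argument as written. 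That $L^1\to L^\infty$ bound is precisely the nontrivial content of the paper's proof, and it is obtained there without ever estimating the off-diagonal kernels: one uses the factorization $dE_{\sqrt{\LL_0}}(\lambda)=(2\pi)^{-1}P(\lambda)P(\lambda)^*$ through the Poisson operator (\cite{HV}), deduces $\|Q_j(\lambda)P(\lambda)\|_{L^2\to L^\infty}\lesssim\lambda^{(n-1)/2}$ from the diagonal bounds by $TT^*$, sums over the finitely many $j$ to bound $\|P(\lambda)\|_{L^2\to L^\infty}$, and applies $TT^*$ once more. Your proposal is missing exactly this step (or some substitute for it), and without it the argument does not close.

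Two further remarks. First, you are proving more than is needed: for the Mikhlin--H\"ormander conclusion with $N\geq \frac n2+1$ and all $1<p<\infty$, the \cite{COSY} machinery (together with finite propagation speed, which the paper takes from \cite{CT}, and doubling) requires only the $p=1$ restriction-type estimate $\lambda^{n-1}$, not the full Stein--Tomas range $1\leq p\leq \frac{2(n+1)}{n+3}$; establishing the sharp $L^p\to L^{p'}$ estimate on the cone is a separate, harder task in the spirit of \cite{GHS2,Z1} and is unnecessary here. Second, your stated volume lower bound $\mu(B(z,\rho))\geq c\min(\rho^n,\rho\, r'^{\,n-1})$ is too weak to yield a uniform doubling constant when $\rho\gg r'$ (the ratio $\rho^n/(\rho r'^{\,n-1})$ is unbounded); what is true, and what one should use, is Ahlfors regularity $\mu(B(z,\rho))\sim\rho^n$, since for $\rho\gg r'$ the ball $B(z,\rho)$ contains a comparably large ball about the cone tip.
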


Then the inequality \eqref{squareL} follows from the general Marcinkiewicz interpolation theorem \cite[Theorem 5.3.2]{BL} and dual argument. Indeed, define the
quadratic functional operator for $f\in L^p(X)$
$$\mathcal{G}_{\LL_0}(f)=\left(\sum_{j\in\Z}|\varphi(2^{-j}\sqrt{\LL_0})f|^2\right)^{1/2},$$ then the operator $\mathcal{G}_{\LL_0}$ is sublinear and is bounded
on $L^{1+}$  and $L^{\infty-}$ respectively. Therefore, using the general Marcinkiewicz interpolation theorem \cite[Theorem 5.3.2]{BL}, the operator $\mathcal{G}_{\LL_0}$ is bounded on $L^{p,r}(X)$ for
 all $1<p<\infty$ and $0<r\leq \infty$, hence the case $r=2$ shows the second inequality in  \eqref{squareL}. The other side can be obtained by dual argument.
 \end{proof}

Now our main task here is to show Lemma \ref{lemma:MH}.
\begin{proof}
We adopt the argument which are in spirit of  \cite{GHS2} and \cite{COSY}.
We first prove the spectral measure estimate by using the $TT^*$ argument as given in \cite{GHS2}
\begin{equation}\label{est:spect}
\|dE_{\sqrt{\LL_0}}(\lambda)\|_{L^1(X)\to L^\infty(X)}\leq C\lambda^{n-1}, \quad \lambda>0.
\end{equation}
By Proposition \ref{prop:localized spectral measure}, it is easy to see that
\begin{equation*}
\|Q_j(\lambda)dE_{\sqrt{\LL_0}}(\lambda)Q_j^*(\lambda)\|_{L^1(X)\to L^\infty(X)}\leq C\lambda^{n-1}, \quad \lambda>0.
\end{equation*}
Let $P(\lambda)$ be the Poisson operator associated with $\LL_0$,  then $dE_{\sqrt{\LL_0}}(\lambda)=(2\pi)^{-1}P(\lambda)P(\lambda)^*$ as
shown in \cite{HV}. By using $TT^*$ argument again, it follows that
\begin{equation*}
\|Q_j(\lambda)P(\lambda)\|_{L^2(X)\to L^\infty(X)}\leq C\lambda^{(n-1)/2}, \quad \lambda>0.
\end{equation*}
Note that the partition of unity $Id=\sum_{j=0}^N Q_j$ in Proposition \ref{prop:localized spectral measure}, therefore we have
\begin{equation*}
\|P(\lambda)\|_{L^2(X)\to L^\infty(X)}\leq \sum_{j=0}^N\|Q_j(\lambda)P(\lambda)\|_{L^2(X)\to L^\infty(X)}\leq C\lambda^{(n-1)/2}, \quad \lambda>0.
\end{equation*}
By $TT^*$ argument again, we show \eqref{est:spect}.

From \cite[formula (0.13)]{CT}, it follows  the finite propagation speed of solutions to $(\partial_t^2+\LL_0)u=0$. Hence the operator $\LL_0$ satisfies the finite speed propagation property. By \eqref{est:spect}
and using \cite[Propositions 2.4, 9.1 and Theorems 4.1, 5.1]{COSY}, we have that $m(\sqrt{\LL_0})$ is bounded on $L^p(X)$ for all $1<p<\infty$.

\end{proof}

\vspace{0.2cm}

\section{Sobolev inequality and a generalized Riesz transform} For our purpose, we consider the Sobolev space, Sobolev inequality and
a boundedness of generalized Riesz transform associated with $\LL_V$ in this section. Recall the notation $z=(r,y)$ and $z'=(r',y')$.

For $s\in\R$,  the operator $\LL_V^{\frac{s}2}$ is defined by
\begin{equation}\label{f-op}
\LL_V^{s/2}=\int_0^\infty \lambda^{s} dE_{\sqrt{\LL_V}}(\lambda)
\end{equation}
where $dE_{\sqrt{\LL_V}}$ is the spectral measure of the operator $\sqrt{\LL_V}$.

\begin{definition}[Sobolev space]
For $1\leq p<\infty$ and $s\in\R$, we define
the homogeneous Sobolev space $\dot{H}^s_p(X):=\LL_V^{-\frac s2} L^p(X)$ over $L^p(X)$ which consists of the functions $f$ such that $\LL_V^{s/2}f\in L^p(X)$.
In particular $p=2$, define $\dot{H}^s(X):=\dot{H}^s_2(X)=\LL_V^{-\frac s2} L^2(X)$.
\end{definition}
\begin{remark}For all general $1\leq p<\infty$, due to the influence of the inverse-square potential,
the Sobolev norm defined here is not equivalent to the analogue one defined by the operator $\LL_0$  without the potential.
For example, we refer the reader to \cite{KMVZZ1} for the Euclidean Laplacian with the inverse-square potential. But for $p=2$, the two norms are equivalent.
\end{remark}
The equivalent of the two Sobolev spaces is closely related to a topics about the boundedness of the generalized Riesz transform operator
\begin{equation}
\Delta_g^{\frac s2}\LL_V^{-\frac s2}: L^p(X)\to L^p(X),
\end{equation}
and its reverse operator $\LL_V^{\frac s2}\Delta_g^{-\frac s2}$. In \cite{KMVZZ1}, the authors studied the equivalent norms in which we replace $\LL_V$ by $\Delta+a r^{-2}$ in Euclidean space
by starting from  its heat kernel estimate. However, as far as we know, there is no result about heat kernel estimate of $\LL_V$, even though Li \cite{L2} proved the heat kernel estimate for
$\Delta_g$ on metric cone. Rather than from heat kernel, we study the problem  from the asymptotical behavior of  the resolvent $(\LL_V+1)^{-1}(z,z') $; see \cite[Theorem 4.11, Lemma 5.4]{HL}.

When $0<s<n$, we  can define the operator $\mathcal L_V^{-\frac{s}2}$ by the Riesz potential kernel
$$
\mathcal{L}_V^{-\frac{s}2}(z,z'):=\int_0^\infty \lambda^{1-s}(\LL_V+\lambda^2)^{-1}(z,z') d\lambda.
$$
Before stating the main results of this section, we show the estimates on the kernels
\begin{lemma}\label{lem:kernel} Let $Q(z,z')$ and $G(z,z')$ be the kernels of the operators $\mathcal{L}_V^{-\frac{s}2}$ and $\nabla_g\Delta_g^{\frac{s-1}2}$ respectively.
Assume $0<s<2$, then $Q(z,z')$ satisfies
\begin{equation}\label{equ:Q}
Q(z, z')\lesssim \begin{cases}
r'^{-n}r^{s}(r/r')^{1-\frac n2+\nu_0-s},\quad r<\frac{r'}2;\\
d(z,z')^{-(n-s)},\quad r\sim r';\\
r^{-n+s}(r'/r)^{1-\frac n2+\nu_0},\quad r>2r';
\end{cases}
\end{equation}
and if $0<s\leq 1$ then $G(z,z')$ satisfies
\begin{equation}\label{equ:G}
G(z,z')\lesssim \begin{cases}
r'^{-n}r^{-s}(r/r')^{s-\frac n2+\nu'_0},\quad r'>2r;\\
r^{-1}d(z,z')^{-(n-1+s)},\quad r\sim r';\\
r^{-n-s}(r'/r)^{1-\frac n2+\nu'_0},\quad r'<\frac{r}{2};
\end{cases}
\end{equation}
where $\nu_0$ (resp. $\nu_0'$) is the square root of the smallest eigenvalue of the operator $\Delta_h+V_0(y)+(n-2)^2/4$ (resp. $\Delta_h+(n-2)^2/4$).

\end{lemma}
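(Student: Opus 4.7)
The plan is to express both kernels via the Riesz potential representation, exploit the dilation homogeneity of $\mathcal{L}_V$ and $\Delta_g$ to reduce to the resolvents at energy one, and then invoke the pointwise kernel bounds for $(\mathcal{L}_V+1)^{-1}$ and $(\Delta_g+1)^{-1}$ established by Hassell--Lin \cite{HL}. Since $\mathcal{L}_V$ is homogeneous of degree $-2$ under the radial dilation $z\mapsto\lambda z$, one has $(\mathcal{L}_V+\lambda^2)^{-1}(z,z')=\lambda^{n-2}(\mathcal{L}_V+1)^{-1}(\lambda z,\lambda z')$, so that
\begin{equation*}
Q(z,z')=\int_0^\infty\lambda^{n-1-s}(\mathcal{L}_V+1)^{-1}(\lambda z,\lambda z')\,d\lambda.
\end{equation*}
Analogously, noting that $\nabla_g$ carries scaling weight $-1$ under the same dilation, one obtains
\begin{equation*}
G(z,z')=\int_0^\infty\lambda^{n+s-1}\bigl[\nabla_g(\Delta_g+1)^{-1}\bigr](\lambda z,\lambda z')\,d\lambda,
\end{equation*}
where now $V_0\equiv 0$ so the relevant boundary index becomes $\nu_0'$.

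Next, I would record the Hassell--Lin kernel estimates, which arise from the spectral decomposition $(\mathcal{L}_V+1)^{-1}(z,z')=\sum_{\nu\in\chi_\infty}(rr')^{-(n-2)/2}I_\nu(\min\{r,r'\})K_\nu(\max\{r,r'\})\pi_\nu(y,y')$ together with the Bessel asymptotics $I_\nu(t)\sim t^\nu$ and $K_\nu(t)\sim t^{-\nu}$ for $t\lesssim 1$, and $K_\nu(t)\sim t^{-1/2}e^{-t}$ for $t\gtrsim 1$. The worst growth is driven by $\nu=\nu_0$, yielding the regime-dependent bounds $(\mathcal{L}_V+1)^{-1}(w,w')\lesssim d(w,w')^{-(n-2)}$ when $d(w,w')\le 1$; a boundary factor $(r(w)/r(w'))^{\nu_0-(n-2)/2}$ modulated by exponential decay at the larger scale when $r(w)\le r(w')/2$; and the symmetric statement for $r(w)\ge 2r(w')$. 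Inserting these into the $\lambda$-integrals and computing the three cases: for $r\sim r'$, splitting at $\lambda\sim 1/d(z,z')$ gives $d(z,z')^{-(n-2)}\int_0^{1/d}\lambda^{1-s}\,d\lambda\sim d(z,z')^{-(n-s)}$, the tail being absorbed by the exponential decay; for $r<r'/2$, the change of variable $\lambda\mapsto\lambda/r'$ extracts the global factor $r'^{-(n-s)}$, while the small-argument behavior of $I_{\nu_0}$ at $\lambda r/r'$ pulls out an additional $(r/r')^{\nu_0-(n-2)/2}$, and rearranging the exponents yields exactly $r'^{-n}r^s(r/r')^{1-n/2+\nu_0-s}$. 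The third case follows from the symmetry $Q(z,z')=Q(z',z)$. The estimate on $G$ is obtained identically, with $\nabla_g(\Delta_g+1)^{-1}$ contributing a diagonal singularity of order $n-1$ and the tangential component $r^{-1}\nabla_y$ of the gradient providing the extra $r^{-1}$ in the near-diagonal bound.

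The main obstacle is the careful bookkeeping of the Bessel asymptotics and of the $\nu$-summation, which requires uniform control of the tail as $\nu\to\infty$; this is ensured by the rapid decay of $I_\nu(t)K_\nu(t')$ in $\nu$ for fixed $t,t'>0$, so that the estimate is dominated by the $\nu=\nu_0$ mode. A further delicate point appears for $G$ in the regime $r\sim r'$: to obtain the sharpness of the bound $r^{-1}d(z,z')^{-(n-1+s)}$ one must treat the tangential component $r^{-1}\nabla_y$ of $\nabla_g$ separately and exploit the conic $r^{-1}$ factor, rather than absorb it into a generic $|\nabla_g|\lesssim d(z,z')^{-1}$ applied to the Riesz potential $\Delta_g^{(s-1)/2}(z,z')\sim d(z,z')^{-(n-1+s)}$; keeping the radial and tangential contributions separate, together with matching exponents via the scaling change of variable, is the technical heart of the proof.
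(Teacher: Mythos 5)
Your proposal follows essentially the same route as the paper's proof: represent $Q$ (and $G$) as Riesz potentials of the resolvent, use the degree $-2$ homogeneity to reduce to the unit-energy resolvent, insert the Hassell--Lin kernel bounds, and split the $\lambda$-integral at the scale $1/r'$ (resp.\ $1/d(z,z')$); your exponent bookkeeping in the regime $r<r'/2$ reproduces \eqref{equ:Q} exactly, and handling $r>2r'$ by symmetry is equivalent to the paper's use of the symmetric estimate from \cite[Theorem 4.11]{HL}. Two remarks on the differences. Where you sketch the origin of the resolvent bounds via the $I_\nu K_\nu$ separation-of-variables series, the paper simply quotes \cite[Theorem 4.11, Lemma 5.4]{HL} (after inserting cutoffs $\chi(4r/r')$, $\chi(4r'/r)$ and disposing of the overlap regions where $d(z,z')\sim r\sim r'$); your claim that the $\nu$-tail is controlled by the decay of $I_\nu(t)K_\nu(t')$ at fixed $t,t'$ would not by itself give the uniform near-diagonal bound $d(z,z')^{-(n-2)}$, since that decay degenerates as the arguments coalesce and one also needs control of $\pi_\nu(y,y')$ in $\nu$ --- but this is moot because, like the paper, you ultimately invoke the Hassell--Lin estimates as a black box. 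For $G$, the paper's entire argument is the structural remark that $\nabla_g$ is $r^{-1}$ times a b-derivative (a scattering vector field for large $r$), so the bound for $\Delta_g^{(s-1)/2}$, obtained from \eqref{equ:Q} with $s\mapsto 1-s$, $\nu_0\mapsto\nu_0'$ and $V_0\equiv0$, is simply multiplied by $r^{-1}$; the near-diagonal refinement $r^{-1}d(z,z')^{-(n-1+s)}$ (rather than the cruder $d(z,z')^{-(n+s)}$ a generic gradient bound would give) that you single out as the technical heart is precisely the step the paper compresses into that one sentence, and neither your sketch nor the paper's text carries out the underlying conormal estimate in detail. So your plan is faithful to the paper's proof, and is if anything more explicit about where the remaining work for \eqref{equ:G} lies.
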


\begin{remark}Note that $\nu_0'\geq (n-2)/2$, as mentioned in \cite[Remark 4.13]{HL},  one can improve \eqref{equ:G} through replacing $\nu_0'$ by $\nu_1'$,
the square root of the second smallest eigenvalue of  $\Delta_h+(n-2)^2/4$.\end{remark}

\begin{proof} We first estimate $Q(z,z')$. Let $\chi\:[0,\infty)\to [0,1]$ be a smooth cutoff function such that $\chi([0,1/2])=1$ and $\chi([1,\infty))=0$. Define
\begin{align}\label{K12}
Q_1(z,z')=\chi(4r/r')\int_0^\infty \lambda^{1-s}(\LL_V+\lambda^2)^{-1}(z,z') d\lambda;
\\ Q_2(z,z')=\chi(4r'/r)\int_0^\infty \lambda^{1-s}(\LL_V+\lambda^2)^{-1}(z,z') d\lambda;
\end{align}
and
\begin{align}\label{Q0}
Q_0(z,z')=(1-\chi(4r/r')-\chi(4r'/r))\int_0^\infty \lambda^{1-s}(\LL_V+\lambda^2)^{-1}(z,z') d\lambda.
\end{align}
Since $\mathcal{L}_V$ is homogeneous of degree $-2$,  then by scaling we have 
$$(\mathcal{L}_V+\lambda^2)^{-1}(z,z')=\lambda^{n-2}(\mathcal{L}_V+1)^{-1}(\lambda z,\lambda z').$$
Now we consider the boundedness of $Q_1$. By \cite[Theorem 4.11]{HL}, for any $N>0$, we have 
\begin{align}\label{Q1}
|\chi(4r/r')(\mathcal{L}_V+1)^{-1}(z,z')|\lesssim r^{1-\frac n2+\nu_0}r'^{1-\frac n2-\nu_0}\langle r'\rangle^{-N}.
\end{align}
Therefore, for any $N>1-s$ and $s<2$, we have by \eqref{Q1} 
\begin{equation}\label{Q1}
\begin{split}
&\left|Q_1(z,z')\right|\lesssim \left|\int_0^\infty \lambda^{n-1-s}\chi(4r/r')(\LL_V+1)^{-1}(\lambda z, \lambda z') d\lambda\right|\\
&\lesssim r'^{2-n}(r/r')^{1-\frac n2+\nu_0}\left(\int_0^{1/r'} \lambda^{1-s}d\lambda+r'^{-N}\int_{1/r'}^\infty \lambda^{1-s-N}d\lambda\right)\\
&\lesssim r'^{-n}r^{s}(r/r')^{1-\frac n2+\nu_0-s}.
\end{split}
\end{equation}
Similarly we consider the boundedness of $Q_2$. By \cite[Theorem 4.11]{HL} again, we have for any $N>0$
\begin{align}\label{Q2}
|\chi(4r'/r)(\mathcal{L}_V+1)^{-1}(z,z')|\lesssim r'^{1-\frac n2+\nu_0}r^{1-\frac n2-\nu_0}\langle r\rangle^{-N}.
\end{align}
Therefore similarly as estimating \eqref{Q1}, for $s<2$, we have 
\begin{equation}
\begin{split}
\left|Q_2(z,z')\right|\lesssim r^{-n}r^{s}(r'/r)^{1-\frac n2+\nu_0}.
\end{split}
\end{equation}
Finally we estimate $Q_0$. Recall \cite[Lemma 5.4] {HL}, for any $N>0$, we have 
\begin{align*}
\left|(1-\chi(4r/r')-\chi(4r'/r) )(\mathcal{L}_V+1)^{-1}(z,z')\right|\lesssim
\begin{cases}d(z,z')^{2-n}, \quad &d(z,z')\leq 1;\\d(z,z')^{-N}, \quad &d(z,z')\geq 1.
\end{cases}
\end{align*}
Therefore, we compute that by using $d(\lambda z, \lambda z')=\lambda d(z,z')$
\begin{align*}
\left|(1-\chi(4r/r')-\chi(4r'/r) )(\mathcal{L}_V+1)^{-1}(\lambda z,\lambda z')\right|\lesssim
\begin{cases}\lambda^{2-n} d(z,z')^{2-n}, &d(z,z')\leq 1/\lambda;\\\lambda^{-N}d(z,z')^{-N},  &d(z,z')\geq 1/\lambda.
\end{cases}
\end{align*}
We estimate the kernel $Q_0(z,z')$ for $s<2$ and $N>n-s$
\begin{equation*}
\begin{split}
|Q_0(z,z')|&\lesssim \left(d(z,z')^{2-n}  \int_0^{1/d(z,z')} \lambda^{1-s} d\lambda+d(z,z')^{-N}\int_{1/d(z,z')}^\infty \lambda^{n-1-s-N}  d\lambda\right)
\\&\lesssim d(z,z')^{-(n-s)}.
\end{split}
\end{equation*}
We need a modification to prove \eqref{equ:Q} due to the support of $\chi$. For instance, from $Q_1$, we directly see that $Q(z,z')\lesssim r'^{-n}r^{s}(r/r')^{1-\frac n2+\nu_0-s}$ when $r<r'/8$.
On the region $r'/8\leq r\leq r'/2$, since $r'/2\leq |r-r'|\leq d(z,z')\leq r+r'\lesssim r' $ thus $d(z,z')\sim r\sim r'$. Therefore we prove the boundedness of $Q$ on $ r\leq r'/2$.
We also can prove the boundedness on $ r\geq 2r'$ through the same modification argument. Hence we prove \eqref{equ:Q}.\vspace{0.2cm}

We next estimate $G$.
Notice that the derivative $\nabla_g$ is of the form that $r^{-1}$  times a smooth b-derivative
for small $r$, and is a smooth scattering vector field for $r$ large; we refer the reader to \cite{HL} for the b-derivative and scattering vector field. Since $0<s<1$, we can replace the $s$ (resp. $\nu_0$) by $1-s$ (resp. $\nu_0'$)
 to obtain the estimate of  the kernel $\Delta_g^{(s-1)/2}$. Therefore, we finally obtain the estimate of $G$ by multiplying $r^{-1}$, thus we prove \eqref{equ:G}.

\end{proof}

\begin{lemma}\label{lem:tfpq} Let $0\leq s<n,\; 1\leq p,q\leq+\infty$.
Let $K(r,r',y,y')$ be a kernel on the cone $X$. Define the operator
$$Tf=\int_{X} K(z,z')f(z')d\mu(z').$$ If
\begin{equation}
|K(r,r',y,y')|\lesssim \begin{cases} r^{-\alpha}r'^{-\beta},\quad &r\leq r'\\
0,\quad &r>r',
\end{cases}
\end{equation}
and $\alpha+\beta=n-s, \beta>0$, then \begin{equation}\label{equ:kfpq}
\|Tf\|_{L^q(X)}\leq C\|f\|_{L^p(X)}, \quad s=\frac{n}{p}-\frac{n}{q},~
\end{equation}
with
\begin{equation}\label{equ:pqr1}
p\leq q<\frac{n}{\max\{\alpha,0\}}.
\end{equation}

Similarly, if
\begin{equation}
|K(r,r',y,y')|\lesssim \begin{cases}
0,\quad &r<r'\\
 r^{-\alpha}r'^{-\beta},\quad &r\geq r',
\end{cases}
\end{equation}
and $\alpha+\beta=n-s, \alpha>0$, then \eqref{equ:kfpq} holds for
\begin{equation}\label{equ:pqr2}
s=\frac{n}{p}-\frac{n}{q},~q>\frac{n}{\min\{\alpha,n\}}.
\end{equation}
\end{lemma}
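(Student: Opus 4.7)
The plan is to reduce the $L^p(X)\to L^q(X)$ bound for $T$ to a weighted one-dimensional Hardy inequality in the radial variable $r$, exploiting the crucial fact that the kernel bound depends only on $r,r'$ and is independent of the angular variables $y,y'$.

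First, I would reduce to a radial problem. Applying H\"older's inequality on the compact cross section $Y$ (using $\mu_Y(Y)<\infty$), for each $r'$,
$$\int_Y |f(r',y')|\,d\mu_Y(y')\le C\,\|f(r',\cdot)\|_{L^p(Y)}=:CF(r').$$
Since the kernel bound in the first case is $y,y'$-independent,
$$|Tf(r,y)|\le C\,r^{-\alpha}\int_r^\infty r'^{\,n-1-\beta}F(r')\,dr',$$
and the right-hand side does not depend on $y$. Integrating in $d\mu_Y(y)$ and noting that $\|f\|_{L^p(X)}^p\sim\int_0^\infty F(r)^p\,r^{\,n-1}dr$, the desired estimate reduces to
$$\Bigl(\int_0^\infty r^{\,n-1-\alpha q}\Bigl(\int_r^\infty r'^{\,n-1-\beta}F(r')\,dr'\Bigr)^{q}dr\Bigr)^{1/q}\le C\Bigl(\int_0^\infty F(r)^p\,r^{\,n-1}dr\Bigr)^{1/p}.$$

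Second, I would apply the weighted Hardy inequality. Setting $g(r)=r^{\,n-1-\beta}F(r)$ and reading off the weights $w(r)=r^{\,n-1-\alpha q}$ and $v(r)=r^{\,n-1-(n-1-\beta)p}$, the display above is exactly $\|T_2g\|_{L^q(w)}\lesssim\|g\|_{L^p(v)}$ for the conjugate Hardy operator $T_2g(r)=\int_r^\infty g(s)\,ds$. Bradley's theorem for $1<p\le q<\infty$ reduces this to the Muckenhoupt-type condition
$$\sup_{r>0}\Bigl(\int_0^r w(s)\,ds\Bigr)^{1/q}\Bigl(\int_r^\infty v(s)^{1-p'}\,ds\Bigr)^{1/p'}<\infty.$$
The first integral converges at $0$ iff $\alpha q<n$, which under $\alpha>0$ is $q<n/\alpha$ and is automatic for $\alpha\le 0$; the second converges at $\infty$ iff $\beta>n/p'$, which by $\alpha+\beta=n-s$ is equivalent to $\alpha<n/q$. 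A direct calculation using the scaling relation $s=n/p-n/q$ shows that the $r$-exponent of the product is $n/q+n/p'-(n-s)=s-(n/p-n/q)=0$, so the supremum is a finite constant independent of $r$. The endpoint $p=1$ follows from the weak-type analogue of Hardy.

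Third, the second case $r\ge r'$ is handled symmetrically, replacing $T_2$ by $T_1g(r)=\int_0^r g(s)\,ds$. The corresponding Muckenhoupt condition swaps the roles of $0$ and $\infty$, yielding the restriction $q>n/\alpha$ when $\alpha<n$ (and no upper restriction when $\alpha\ge n$), which matches \eqref{equ:pqr2}. The main obstacle in carrying this out is the parameter bookkeeping: one must verify that the restrictions $p\le q<n/\max\{\alpha,0\}$ in \eqref{equ:pqr1} and $q>n/\min\{\alpha,n\}$ in \eqref{equ:pqr2} correspond exactly to the two convergence conditions above, and that the scaling relation $s=n/p-n/q$ together with $\alpha+\beta=n-s$ is what makes the Muckenhoupt supremum dilation-invariant. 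A quick scaling sanity check with $f(z)=r^{-n/p}$, for which $Tf$ should scale like $r^{-n/q}$, confirms the sharpness of the range.
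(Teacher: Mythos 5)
Your proposal is correct, and its first half is the same reduction the paper makes: both exploit that the kernel bound is independent of the angular variables, apply H\"older on the compact cross-section $Y$ to pass to a one-dimensional radial estimate with the pure power kernel $r^{-\alpha}r'^{\,n-1-\beta}$, and then use dilation invariance. Where you genuinely differ is the final one-dimensional step. The paper substitutes $\tilde r=\ln r$, $\tilde r'=\ln r'$, which converts the homogeneous kernel into the convolution kernel $e^{(\tilde r-\tilde r')(n/q-\alpha)}\mathbf{1}_{\tilde r<\tilde r'}$ on $\R$, and concludes with the generalized Young inequality with $1/\sigma=1+1/q-1/p$; the $L^\sigma$-integrability of this kernel is exactly $\alpha<n/q$, i.e.\ \eqref{equ:pqr1}, and the reversed sign gives \eqref{equ:pqr2}. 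You instead keep the weights and invoke Bradley's weighted Hardy inequality, checking the Muckenhoupt-type condition for power weights; your computation that the supremum is dilation-invariant and finite under precisely $\alpha<n/q$ (equivalently $\beta>n/p'$), respectively $\alpha>n/q$, is right and reproduces the stated ranges. For pure power weights the two routes encode the same fact, so the paper's argument is in effect a self-contained elementary proof of the special case of the Hardy inequality you cite, treating all $1\le p\le q$ uniformly, while yours buys a sharp characterization (and a built-in sharpness check) at the cost of a heavier external theorem. One small caveat: for $p=1$ the lemma asserts a strong-type bound, so a ``weak-type analogue'' is not what you need; rather you need the strong-type $(1,q)$ weighted Hardy inequality, whose Muckenhoupt condition replaces the $L^{p'}$ integral by $\operatorname{ess\,sup}_{s>r}v(s)^{-1}$ (respectively over $(0,r)$ in the second case). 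This version is covered by Bradley/Muckenhoupt for $1\le p\le q$, and your exponent bookkeeping ($\beta>0$ in the first case, and $\beta<0$ forced by the hypotheses in the second) makes it apply, so the gap is only in the wording, not in the mathematics.
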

\begin{remark} In particular $s=0$, then $q=p$. This special result has been proved in \cite[Corollary 5.9]{HL}. Here, we extend such result to $q\geq p.$
\end{remark}

\begin{proof} We use the argument of  \cite[Corollary 5.9]{HL}. Noting that $d\mu=r^{n-1}dr\;dh$ and the section $Y$ is a compact set, we get
\begin{align}\nonumber
\|Tf\|_{L^q(X)}=&\Big(\int_X\Big|\int_X K(z,z')f(z')\;d\mu(z')\Big|^q\;d\mu(z)\Big)^{1/q}\\\label{equ:k1}
\lesssim&\Big(\int_0^\infty\Big|\int_{r<r'}\tilde{K}(r,r')\tilde{f}(r') r'^{-1}\;dr'\Big|^q\; r^{-1}dr\Big)^{1/q},
\end{align}
where $$ \tilde{f}(r')=r'^{\frac{n}{p}}\int_Y|f(r',y)|\;dh$$
and
$$\tilde{K}(r,r')= r^{-\alpha}r'^{-\beta}r^{\frac{n}q}r'^{\frac{n}{p'}}=(r/r')^{\frac{n}{q}-\alpha}.$$
Perform a substitution $\tilde{r}=\ln r,\; \tilde{r}'=\ln r'$, then
\begin{align}\label{equ:k2}
\|Tf\|_{L^q(X)}
\lesssim&\Big(\int_{\R}\Big|\int_{\tilde{r}<\tilde{r}'}\tilde{K}_1(\tilde{r},\tilde{r}')\tilde{f}(e^{\tilde{r}'})\;d\tilde{r}'\Big|^q\; d\tilde{r}\Big)^{1/q},
\end{align}
with
$$\tilde{K}_1(\tilde{r},\tilde{r}')=e^{(\tilde{r}-\tilde{r}')(\frac{n}{q}-\alpha)}.$$
Then, it is easy to see that
\begin{equation}\label{equ:clamikz}
\sup_{\tilde{r}\in\R}\int_{\tilde{r}<\tilde{r}'}|\tilde{K}_1(\tilde{r},\tilde{r}')|^\sigma\;d\tilde{r}'+\sup_{\tilde{r}'\in\R}\int_{\tilde{r}<\tilde{r}'}|\tilde{K}_1(\tilde{r},\tilde{r}')|^\sigma\;d\tilde{r}<+\infty,
\end{equation}
with  $\big(\tfrac{n}q-\alpha\big)\sigma>0$ guaranteed by \eqref{equ:pqr1}. Especially, taking
$\tfrac1{\sigma}=1+\tfrac1q-\tfrac1p\geq1,$ and using
 generalised Young's inequality, we obtain
\begin{align*}
\|Tf\|_{L^q(X)}\lesssim&\Big(\int_{\R}|\tilde{f}(e^{s'})|^p\;ds'\Big)^{1/p}\\
\lesssim&\Big(\int_0^{+\infty}\int_Y|f(r',y)|^p r'^{n-1}\;dr'\;dh\Big)^{1/p}
\lesssim\|f\|_{L^p(X)}.
\end{align*}
Similarly, we obtain the other case. Hence, Lemma \ref{lem:tfpq} follows.

\end{proof}

We prove the following Sobolev inequality which is well-known in the Euclidean space.
\begin{proposition}[Sobolev inequality for $\LL_V$]\label{P:sobolev}  Let $n\geq 3$ and $\nu_0$ be as above. Suppose $0<s<2$, and $1<p,q<\infty$.  Then
\begin{equation}\label{est:sobolev}
\big\|f(z)\big\|_{L^q(X)}\lesssim \big\|\mathcal L_V^\frac{s}2f\big\|_{L^p(X)}
\end{equation}
holds for $s=\tfrac{n}p-\tfrac{n}q$ and
\begin{equation}\label{est:sobolev_hyp}
\frac{n}{\min\{1+\frac n2+\nu_0-s, n\}}<q<\frac{n}{\max\{\frac n2-1-\nu_0, 0\}}.\end{equation}
\end{proposition}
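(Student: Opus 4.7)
The plan is to establish \eqref{est:sobolev} by proving the boundedness $\mathcal{L}_V^{-s/2}:L^p(X)\to L^q(X)$; by the definition of $\mathcal{L}_V^{-s/2}$ as a Riesz potential with kernel $Q(z,z')$ estimated in Lemma \ref{lem:kernel}, this reduces to a kernel-type estimate. I would introduce a smooth partition of unity in the ratio $r/r'$ and split
\[
Q(z,z')=Q_{\mathrm{near}}(z,z')+Q_{\mathrm{low}}(z,z')+Q_{\mathrm{high}}(z,z'),
\]
supported respectively in the regions $r\sim r'$, $r<r'/2$, and $r>2r'$, and estimate the $L^p\to L^q$ norm of the corresponding integral operator on each piece separately.

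For the diagonal piece, the bound $|Q_{\mathrm{near}}(z,z')|\lesssim d(z,z')^{-(n-s)}$ from the middle case of \eqref{equ:Q} lets me apply the Hardy--Littlewood--Sobolev inequality (Proposition \ref{prop:hls}) with $\alpha=n-s$, yielding the required $L^p\to L^q$ bound under $s=n/p-n/q$ and $1<p<q<\infty$, with no further restriction on $q$.

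For the off-diagonal pieces, I would rewrite the estimates of \eqref{equ:Q} in the factorized form
\[
|Q_{\mathrm{low}}(z,z')|\lesssim r^{1-\tfrac{n}{2}+\nu_0}\,r'^{-(\tfrac{n}{2}+1+\nu_0-s)}\quad(r\leq r'),
\]
\[
|Q_{\mathrm{high}}(z,z')|\lesssim r^{-(\tfrac{n}{2}+1-s+\nu_0)}\,r'^{1-\tfrac{n}{2}+\nu_0}\quad(r\geq r'),
\]
both of which have exponents summing to $n-s$, as required by Lemma \ref{lem:tfpq}. Applying the first half of Lemma \ref{lem:tfpq} to $Q_{\mathrm{low}}$ with $\alpha=\tfrac{n}{2}-1-\nu_0$ produces the upper bound $q<n/\max\{\tfrac{n}{2}-1-\nu_0,0\}$ from \eqref{equ:pqr1}, and applying the second half to $Q_{\mathrm{high}}$ with $\alpha=\tfrac{n}{2}+1-s+\nu_0$ produces the lower bound $q>n/\min\{\tfrac{n}{2}+1+\nu_0-s,n\}$ from \eqref{equ:pqr2}, together giving precisely \eqref{est:sobolev_hyp}. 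Summing the three bounds concludes the proof.

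The main obstacle is identifying the correct factorized form of the off-diagonal kernels and tracking the exponents so that Lemma \ref{lem:tfpq}'s hypotheses are met; once the algebra is in place, the constraints on $q$ can be read off directly. A secondary subtlety is the endpoint case $\nu_0=\tfrac{n}{2}-1$, where the power $\alpha$ vanishes on the low-$r$ side, but the $\max\{\alpha,0\}$ convention in Lemma \ref{lem:tfpq} absorbs this automatically. Note that the hypothesis $0<s<2$ is exactly what is needed so that Lemma \ref{lem:kernel} applies to $Q(z,z')$, and the restriction $\alpha+\beta=n-s$ with the positivity $\beta>0$ (respectively $\alpha>0$) in Lemma \ref{lem:tfpq} is automatic from $0<s<2$ and $\nu_0>0$.
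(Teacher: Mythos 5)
Your proposal is correct and follows essentially the same route as the paper: both bound $\mathcal{L}_V^{-s/2}$ via the kernel estimate \eqref{equ:Q} of Lemma \ref{lem:kernel}, split into the regions $r<r'/2$, $r\sim r'$, $r>2r'$, apply the Hardy--Littlewood--Sobolev inequality (Proposition \ref{prop:hls}) on the diagonal, and apply Lemma \ref{lem:tfpq} to the factorized off-diagonal pieces to read off exactly the constraint \eqref{est:sobolev_hyp}. Your explicit exponent bookkeeping ($\alpha=\tfrac n2-1-\nu_0$, $\beta=\tfrac n2+1+\nu_0-s$ and vice versa) is precisely what the paper's terser proof leaves implicit.
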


\begin{proof} The proof follows from Lemma \ref{lem:kernel}.
The estimate \eqref{est:sobolev} is equivalent to
\begin{equation}\label{est:sobolevequi}
\|Tf\|_{L^q(X)}\lesssim\|f\|_{L^p(X)}
\end{equation}
where the operator $T=\mathcal L_V^{-\frac{s}2}$ is defined by the Riesz potential kernel
$$
\mathcal{L}_V^{-\frac{s}2}(z,z'):=\int_0^\infty \lambda^{1-s}(\LL_V+\lambda^2)^{-1}(z,z') d\lambda.
$$ By using Lemma \ref{lem:kernel}, we have for $0<s<2$
\begin{equation*}
\mathcal{L}_V^{-\frac{s}2}(z,z')\lesssim \begin{cases}
r'^{-n}r^{s}(r/r')^{1-\frac n2+\nu_0-s},\quad &r<\frac{r'}2;\\
d(z,z')^{-(n-s)},\quad &r\sim r';\\
r^{-n+s}(r'/r)^{1-\frac n2+\nu_0},\quad &r>2r'.
\end{cases}
\end{equation*}
Then by  using\eqref{est:sobolev_hyp}, we obtain Proposition \ref{P:sobolev} from Lemma \ref{lem:tfpq} when $r<r'/2$ and $r>2r'$ and
from the Hardy-Littlewood-Sobolev inequality in Proposition \ref{prop:hls} when $r\sim r'$.

\end{proof}

\begin{corollary}[Sobolev inequality for $\LL_V$]\label{P:sobolev'}  If $q\geq 2$ and $p\geq 2$ satisfying \eqref{est:sobolev_hyp}, the above result holds for $s>0$.
\end{corollary}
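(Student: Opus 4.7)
The plan is to bootstrap Proposition 3.3 (valid only for $0<s<2$) to arbitrary $s>0$ by writing $\mathcal{L}_V^{-s/2}$ as an iterated composition $\mathcal{L}_V^{-s_k/2}\circ\cdots\circ\mathcal{L}_V^{-s_1/2}$, which is legal by spectral calculus, with each increment $s_j\in(0,2)$. Concretely, fix any integer $k>s/2$, set $s_j=s/k$, and introduce intermediate exponents $p_0=p,p_1,\dots,p_k=q$ by $1/p_j=1/p-js/(kn)$, so that $s_j=n/p_{j-1}-n/p_j$ for each $j$.

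I would then apply Proposition 3.3 to each consecutive pair $(p_{j-1},p_j)$ with gain $s_j$ and chain the estimates
\begin{equation*}
\|f\|_{L^q(X)}\;\lesssim\;\|\mathcal{L}_V^{s_k/2}f\|_{L^{p_{k-1}}(X)}\;\lesssim\;\|\mathcal{L}_V^{(s_{k-1}+s_k)/2}f\|_{L^{p_{k-2}}(X)}\;\lesssim\;\cdots\;\lesssim\;\|\mathcal{L}_V^{s/2}f\|_{L^p(X)},
\end{equation*}
which is exactly the claimed inequality. The only thing to verify is that each $p_j$ sits inside the admissibility window \eqref{est:sobolev_hyp} of Proposition 3.3 with $s$ replaced by $s_j$. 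Monotonicity gives $1/q\le 1/p_j\le 1/p\le 1/2$, so $p_j\in[2,q]\subset(1,\infty)$; the upper bound $p_j<n/\max\{n/2-1-\nu_0,0\}$ is inherited from the corresponding bound on $q$ because $1/p_j\ge 1/q$; and for the lower bound, when $s_j\le 1+\nu_0-n/2$ it reduces trivially to $p_j>1$, while when $s_j>1+\nu_0-n/2$ it reduces after a short algebraic rearrangement to $1/p<(1+n/2+\nu_0)/n+(s_1+\cdots+s_{j-1})/n$, which holds because $1/p\le 1/2<(1+n/2+\nu_0)/n$.

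The main obstacle is essentially this propagation check, but it works because the admissibility window only widens as $s_j$ shrinks, so subdividing $s$ into smaller pieces never destroys admissibility along the chain. The assumption $p,q\ge 2$ is used precisely to secure $1/p\le 1/2$, which is what feeds the upper-end bound needed at the very first link; otherwise Proposition 3.3 itself only requires $1<p,q<\infty$, and so the hypothesis $p,q\ge 2$ in the corollary is the mild price paid for permitting $s\ge 2$.
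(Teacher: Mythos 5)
Your argument is correct and is essentially the paper's own proof: the paper likewise factors the gain $s$ into increments smaller than $2$ (it takes them smaller than $1$) and iterates Proposition \ref{P:sobolev} along a chain of intermediate exponents lying between $p$ and $q$, each inside the window \eqref{est:sobolev_hyp}. Your explicit verification that every intermediate exponent satisfies both ends of \eqref{est:sobolev_hyp} (using $1/p\le 1/2<(1+\tfrac n2+\nu_0)/n$) is a detail the paper passes over, but the method is the same.
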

\begin{remark} The restriction on $s$ is $0<s<1+\nu_0$. Indeed, from the facts $p\geq2$ and $2\leq q<n/\max\{\frac n2-1-\nu_0, 0\}$, it follows $s=\frac np-\frac nq<1+\nu_0$.
\end{remark}

\begin{proof}
Choose $\{s_j\}_{j=0}^k$ with $s_0=0, s_k=s$  such that $0<s_{j+1}-s_{j}<1$  and and $\{q_j\}_{j=0}^k$ with $q_0=q, q_k=p$ such that $2\leq q_j<n/\max\{\frac n2-1-\nu_0,0\}$  for $j=0,\cdots k-1$.
Thus, we apply Proposition \ref{prop:hls} to obtain
 \begin{equation*}
\begin{split}
&\|\LL_V^{s_{j}/2} f\|_{L^{q_j}(X)} \lesssim \|\LL_V^{(s_{j+1}-s_j)/2} \LL_V^{s_{j}/2} f\|_{L^{q_{j+1}}(X)}= \|\LL_V^{s_{j+1}/2} f\|_{L^{q_k}(X)}.
\end{split}
\end{equation*}
Therefore, we show
 \begin{equation*}
\begin{split}
\| f(z)\|_{L^{q}(X)}\lesssim \|\LL_V^{s_1/2} f\|_{L^{q_1}(X)}  \lesssim\cdots\lesssim \|\LL_V^{s_k/2} f\|_{L^{q_k}(X)}=\|\LL_V^{s/2} f\|_{L^{p}(X)}.
\end{split}
\end{equation*}
\end{proof}

In the rest of this subsection, we consider the boundedness of the operator
\begin{equation}
\Delta_g^{\frac s2}\LL_V^{-\frac s2}: L^p(X)\to L^p(X), \quad 0<s<1.
\end{equation}
When $s=1$, the boundedness of this operator has been established by Lin-Hassell \cite{HL}. For the following
purpose of the establishment of Strichartz estimate, we need the following result
\begin{proposition}\label{P1} Let $n\geq 4$ and suppose that $s=\frac 2{n-1}$ and $\nu_0$ is in above such that $\nu_0>\frac1{n-1}$. Then the operator
\begin{equation}
\Delta_g^{\frac s2}\LL_V^{-\frac s2}: L^p(X)\to L^p(X), \quad  p=\frac{2(n-1)}{n+1}
\end{equation}
is bounded.
\end{proposition}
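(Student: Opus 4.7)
The plan is to bound $T := \Delta_g^{s/2}\LL_V^{-s/2}$ on $L^p(X)$ by direct kernel analysis, using as building blocks the two kernel estimates in Lemma~\ref{lem:kernel}. Since the classical Riesz transform $R = \nabla_g\Delta_g^{-1/2}$ is bounded on $L^p$ for every $1 < p < \infty$ (via Calder\'on-Zygmund theory applied to the Gaussian heat-kernel bound of \cite{L2}), and its inverse is likewise bounded on the same range, it suffices to bound the operator
$$
S := \nabla_g \Delta_g^{(s-1)/2} \LL_V^{-s/2} = R \cdot \Delta_g^{s/2} \LL_V^{-s/2}
$$
on $L^p(X)$ with $p = 2(n-1)/(n+1)$. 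The integral kernel of $S$ is the composition
$$
K(z,z') = \int_X G(z,w)\, Q(w,z') \, d\mu(w),
$$
where $G$ and $Q$ are precisely the kernels estimated in Lemma~\ref{lem:kernel}.

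First I would split the $w$-integration according to the three regimes appearing in the bounds on $G$ and $Q$ (namely $r_w \ll r'$, $r_w \sim r'$, $r_w \gg r'$, and symmetrically for $r_w$ versus $r$), and then treat the resulting pieces separately according to which of the three regions $r \ll r'$, $r \sim r'$, or $r \gg r'$ contains the pair $(z,z')$. In the off-diagonal regions, elementary integration combined with Lemma~\ref{lem:id} yields pointwise bounds of the form
$$
|K(z,z')| \lesssim r'^{-n}(r/r')^{\alpha_1} \ \text{ for } r \ll r', \qquad |K(z,z')| \lesssim r^{-n}(r'/r)^{\alpha_2} \ \text{ for } r \gg r',
$$
where the exponents $\alpha_1, \alpha_2$ are explicit expressions in $\nu_0$ and $\nu_0'$. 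Lemma~\ref{lem:tfpq} applied with $s = 0$ (so that $\alpha + \beta = n$) then delivers $L^p$-boundedness in these regions provided $\alpha_i > n/p$, which is precisely the content of the hypothesis $\nu_0 > 1/(n-1)$ at the exponent $p = 2(n-1)/(n+1)$. For the diagonal region $r \sim r'$, the kernel satisfies $|K(z,z')| \lesssim d(z,z')^{-n}$, so that $S$ behaves locally as a singular integral of order zero; $L^p$-boundedness there follows from Calder\'on-Zygmund theory on the doubling space $(X, d, \mu)$ (Lemma~\ref{lem:d}), combined with the $L^2$ estimate for $S$ arising from the Plancherel identity and the equivalence $\|\Delta_g^{s/2} f\|_{L^2} \sim \|\LL_V^{s/2} f\|_{L^2}$ (which follows from the strict positivity hypothesis on $\Delta_h + V_0 + (n-2)^2/4$).

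The main obstacle is the diagonal region $r \sim r'$, where the pointwise bound $d(z,z')^{-n}$ is at the borderline for $L^1$-integrability and one must exploit cancellation rather than size alone. Establishing the required Calder\'on-Zygmund regularity, i.e.\ H\"older-type bounds of the form $|\nabla_z K(z,z')| \lesssim d(z,z')^{-n-1}$ in the diagonal regime, reduces to controlling first-order $z$-derivatives inside the spectral representation; this in turn uses the smoothness of the Bessel asymptotics appearing in \eqref{s.exp} together with the differentiated resolvent bounds of \cite[Theorem~4.11]{HL}. The off-diagonal analysis is of a more routine nature but still requires careful bookkeeping of the nine case combinations arising from the split of the $w$-integral, and it is in comparing the extreme exponents produced by $G$ and $Q$ that the threshold $\nu_0 = 1/(n-1)$ emerges.
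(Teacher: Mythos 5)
Your overall route is the same as the paper's: reduce to $\nabla_g\Delta_g^{(s-1)/2}\LL_V^{-s/2}$ via a reverse Riesz inequality, compose the two kernels of Lemma \ref{lem:kernel}, use Lemma \ref{lem:tfpq} off-diagonal, and observe that the threshold $\nu_0>1/(n-1)$ comes from the extreme off-diagonal exponents at $p=2(n-1)/(n+1)$. However, two steps as you state them do not go through. First, your justification of the reduction is wrong as stated: on a metric cone the forward Riesz transform $\nabla_g\Delta_g^{-1/2}$ is \emph{not} bounded on $L^p$ for all $1<p<\infty$ (it is bounded only up to a threshold exponent determined by the cross-section spectrum, cf.\ \cite{HL,L1}), so "Calder\'on--Zygmund theory from the Gaussian heat kernel" plus "its inverse is likewise bounded" is not a proof. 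What you actually need is only the one-sided inequality $\|\Delta_g^{1/2}f\|_{L^p}\lesssim\|\nabla_g f\|_{L^p}$ for the single exponent $p=2(n-1)/(n+1)$; for $n=4$ this $p$ lies below $n/(n-1)$, so even the duality argument from the forward transform does not reach it, and one must invoke the doubling-plus-Poincar\'e criterion of Auscher--Coulhon, as the paper does in the lemma preceding Proposition \ref{P1}.

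Second, and more seriously, your diagonal analysis misreads the kernels and then manufactures a difficulty that cannot be resolved with the tools you cite. In the regime $r\sim r'\sim r''$ one has $G(z,z')\lesssim r^{-1}d(z,z')^{-(n-1+s)}$ and $Q(z',z'')\lesssim d(z',z'')^{-(n-s)}$, and since $0<s<1$ both exponents are \emph{strictly} less than $n$; composing and using Lemma \ref{lem:id} one gets Schur-test bounds $\int_{r\sim r''}K(z,z'')\,d\mu(z'')\lesssim 1$ and $\int_{r\sim r''}K(z,z'')\,d\mu(z)\lesssim 1$, so the diagonal piece is bounded on every $L^p$, $1<p<\infty$, by size alone --- it is not a borderline $d(z,z')^{-n}$ singular integral and no cancellation is needed. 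Your claimed pointwise bound $|K|\lesssim d(z,z')^{-n}$ throws away exactly the gain that makes the Schur test work. Worse, the Calder\'on--Zygmund route you propose instead would require H\"older/gradient regularity of the composed kernel, i.e.\ bounds like $|\nabla_z K|\lesssim d(z,z')^{-n-1}$; Lemma \ref{lem:kernel} provides only size estimates, \cite[Theorem 4.11]{HL} as used here gives no differentiated resolvent bounds, and "smoothness of the Bessel asymptotics in \eqref{s.exp}" does not translate into such estimates for the composition $\nabla_g\Delta_g^{(s-1)/2}\LL_V^{-s/2}$ without substantial new work. So as written the diagonal step is a genuine gap, albeit one created by an unnecessary detour: replacing it with the Schur-test argument above (which is what the paper does in its Case 3) closes the proof.
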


Before proving this proposition, we show
\begin{lemma} The following inequality holds for $q\in (1,\infty)$
\begin{equation}\label{rRiesz}
\|\sqrt{\LL_0} f\|_{L^{q}}:=\|\Delta_g^{\frac12}f\|_{L^q}\lesssim \|\nabla_g f\|_{L^q}.\end{equation}
\end{lemma}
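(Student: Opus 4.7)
The plan is to deduce the stated inequality from the $L^{q'}$-boundedness of the standard Riesz transform $R:=\nabla_g\LL_0^{-1/2}$ on the cone, via a short duality argument. Writing
\begin{equation*}
\sqrt{\LL_0}f \;=\; \LL_0^{-1/2}(\LL_0 f) \;=\; -\LL_0^{-1/2}\mathrm{div}_g(\nabla_g f)\;=\;R^\ast(\nabla_g f),
\end{equation*}
where $R^\ast=-\LL_0^{-1/2}\mathrm{div}_g$ is the formal adjoint of $R$ (with the positive convention $\Delta_g=-\mathrm{div}_g\nabla_g$), one obtains by duality and H\"older's inequality
\begin{equation*}
\|\sqrt{\LL_0}f\|_{L^q(X)} \;=\; \sup_{\|\varphi\|_{L^{q'}(X)}=1}\bigl|\langle\nabla_g f,\,R\varphi\rangle\bigr|\;\le\;\|\nabla_g f\|_{L^q(X)}\,\|R\|_{L^{q'}\to L^{q'}}.
\end{equation*}
Hence the lemma is reduced to the boundedness of $R$ on $L^{q'}(X)$ for every $q'\in(1,\infty)$.

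To establish this Riesz transform bound on the full range, I would split the argument into two regimes. The case $q'=2$ is immediate from the spectral identity $\|R\varphi\|_{L^2}=\|\varphi\|_{L^2}$. For $q'\in(1,2)$, I would combine the Gaussian upper heat-kernel bound for $\Delta_g$ established by Li \cite{L2} with the doubling estimate $\mu(B(z,r))\lesssim r^n$ from Lemma \ref{lem:d}, and invoke a standard Coulhon-Duong-type Calder\'on-Zygmund argument to deduce that $R$ is of weak type $(1,1)$ and therefore $L^{q'}$-bounded for $1<q'\le 2$. For $q'\in(2,\infty)$, I would appeal to the resolvent kernel analysis of Hassell-Lin \cite{HL}: since the present operator is the pure Laplacian $\LL_0$ without any inverse-square potential, the parameter $\nu_0'$ (the square root of the smallest eigenvalue of $\Delta_h+(n-2)^2/4$) is at least $(n-2)/2$, which is large enough to push the upper endpoint of the $L^{q'}$-boundedness interval of $\nabla_g\Delta_g^{-1/2}$ on the cone to $+\infty$.

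The main obstacle is controlling $R$ at large $q'$: on a metric cone the Riesz transform is well-known to fail on $L^{q'}$ once $q'$ exceeds a critical threshold determined by the spectrum of the cross-sectional Laplacian and the conic behavior at the tip, so this step genuinely uses the conic geometry and is not a soft harmonic-analytic fact. The essential point that saves the argument is that the present lemma concerns $\LL_0$ rather than $\LL_V$: without the inverse-square potential, $\nu_0'\ge(n-2)/2$ is large, the relevant kernel estimates from \cite{HL} hold uniformly on the entire upper range, and the duality step then closes on the full interval $(1,\infty)$ as required.
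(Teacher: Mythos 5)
Your reduction by duality, $\|\sqrt{\LL_0}f\|_{L^q}\le \|\nabla_g f\|_{L^q}\,\|\nabla_g\LL_0^{-1/2}\|_{L^{q'}\to L^{q'}}$, is the same starting point as the paper, and your treatment of $1<q'\le 2$ (Gaussian heat kernel bound of Li plus doubling, Coulhon--Duong weak $(1,1)$) is fine; that much yields the lemma for $q\in[2,\infty)$. The genuine gap is in the range $q'\in(2,\infty)$, i.e.\ $q\in(1,2)$. Your claim that, because there is no inverse-square potential, ``$\nu_0'\ge(n-2)/2$ is large enough to push the upper endpoint of the $L^{q'}$-boundedness interval of $\nabla_g\Delta_g^{-1/2}$ to $+\infty$'' is false. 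First, the parameter governing the Riesz transform range in Hassell--Lin is not $\nu_0'$ but $\nu_1'$, the square root of the \emph{second} smallest eigenvalue of $\Delta_h+(n-2)^2/4$; their Theorem 1.1 gives boundedness exactly for $p\in\bigl(1,\,n/\max\{\tfrac n2-\nu_1',0\}\bigr)$. Second, for a general cross-section $Y$ one only knows $\nu_1'=\sqrt{\lambda_1(Y)+(n-2)^2/4}>(n-2)/2$, and if $\lambda_1(Y)$ is small this threshold is finite (only slightly above $n$); the upper endpoint is $+\infty$ only when $\nu_1'\ge n/2$, e.g.\ $Y=\mathbb{S}^{n-1}$. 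On a general metric cone the forward Riesz transform genuinely fails on $L^{q'}$ for large $q'$ — the very obstruction you acknowledge — so the duality argument cannot close on all of $(1,\infty)$: it yields the lemma only for $q\in(n/(n-1),\infty)$ (using $p\in(1,n)$), and the range $q\in(1,n/(n-1)]$ is left open by your scheme.

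The paper fills precisely this gap by a different mechanism: it verifies a scaled $L^1$-Poincar\'e inequality on balls (via the method of Grigor'yan--Saloff-Coste) and combines it with the volume doubling property from Lemma \ref{lem:d}, then invokes Auscher--Coulhon \cite[Theorem 0.7]{AC}, which states that doubling plus Poincar\'e imply the \emph{reverse} Riesz inequality $\|\Delta_g^{1/2}f\|_{L^q}\lesssim\|\nabla_g f\|_{L^q}$ for all $1<q<\infty$, with no reference to boundedness of the forward Riesz transform at the conjugate exponent. To repair your proof you should replace the large-$q'$ step by this doubling-plus-Poincar\'e argument (or otherwise restrict the duality step to the exponents where \cite{HL} actually applies and handle the remaining range separately).
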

\begin{proof}
Indeed partial result is a dual consequence of Riesz transform boundedness. More precisely, Lin-Hassell \cite[Theorem 1.1]{HL}, \cite{L1} has shown
$$\nabla_g\LL_0^{-1/2}: L^{p}(X)\to L^{p}(X)$$ is bounded for $p\in (1, n/\max\{\frac n2-\nu'_1,0\})$ where $\nu'_1>0$ is the square root of the second smallest eigenvalue of the operator
$\Delta_h+(n-2)^2/4$. If $Y=\mathbb{S}^{n-1}$, then $\nu_1'>\frac n2$ since the $k$-th eigenvalue of $\Delta_{\mathbb{S}^{n-1}}$ is $k(k+n-2)$. However, for the general $Y$,
 $\nu'_1>(n-2)/2$, one has the boundedness for $p\in (1,n)$ at least. By the dual argument \cite{AC}, we have
\begin{equation}
\|\sqrt{\LL_0} f\|_{L^{q}}\lesssim \|\nabla_g f\|_{L^{q}}
\end{equation}
for all $q\in (n/(n-1),\infty)$. On the other hand, one can use the method in \cite{GS} to show the following Poincar\'e inequalities for $p=1$
\begin{equation}
\int_{B}|f-f_B|^p d\mu(z') \lesssim r^p\int_{B}|\nabla f|^p d\mu
\end{equation}
where $B=B(z,r)$ and $f_B=\frac1{\mu(B)}\int_B f d\mu$. A result in \cite [Theorem 0.7]{AC} claimed that
the doubling condition and Poincar\'e inequality implies the reverse Riesz transform boundedness. Hence \eqref{rRiesz} holds for $q\in (1,\infty)$
which also was stated in \cite [Page 535]{AC} for our setting.
\end{proof}

Now we prove Proposition \ref{P1}. Write $\Delta_g^{\frac s2}=\Delta_g^{\frac 12}\Delta_g^{\frac {s-1}2}$, by using \eqref{rRiesz}, it suffices to establish
\begin{equation}
\nabla_g\Delta_g^{\frac {s-1}2}\LL_V^{-\frac s2}: L^p(X)\to L^p(X),  \quad s=\frac2{n-1}, \quad p=\frac{2(n-1)}{n+1},~n\geq4.
\end{equation}
Let $G(z,z')$ denote the kernel of the operator $\nabla_g\Delta_g^{\frac {s-1}2}$ and $Q(z',z'')$ denote the kernel of the operator $\LL_V^{-\frac s2}$.
Recall $z=(r,y)$, therefore by using Lemma \ref{lem:kernel}, the kernel $G(z,z')$ and $Q(z',z'')$ satisfy
\begin{equation}\label{equ:gzz}
G(z,z')\lesssim \begin{cases}
r'^{-n}r^{-s}(r/r')^{s-\frac n2+\nu'_0},\quad r'>2r;\\
r^{-1}d(z,z')^{-(n-1+s)},\quad r\sim r';\\
r^{-n-s}(r'/r)^{1-\frac n2+\nu'_0},\quad r'<\frac{r}{2};
\end{cases}
\end{equation}
and \begin{equation}\label{equ:qzz}
Q(z',z'')\lesssim \begin{cases}
r''^{-n}r'^{s}(r'/r'')^{1-\frac n2+\nu_0-s},\quad r'<\frac{r''}2;\\
d(z',z'')^{-(n-s)},\quad r'\sim r'';\\
r'^{-n+s}(r''/r')^{1-\frac n2+\nu_0},\quad r'>2r'';
\end{cases}
\end{equation}
Define the operator $$Tf(z):=\int_X K(z,z'')f(z'')\;d\mu(z''),$$
where the kernel $K(z,z'')$ is given by
$$K(z,z''):=\int_X G(z,z')Q(z',z'')\;d\mu(z').$$
To prove Proposition \ref{P1}, it suffices to show
\begin{proposition}\label{prop:mainest}For $0<s<1$, there exists a constant $C$
\begin{equation}\label{equ:mainest}
\|Tf(z)\|_{L^p(X)}\leq C\|f\|_{L^p(X)},
\end{equation}
provided
\begin{equation}\label{con:p}
\frac{n}{\min\{n, \frac n2+\nu_0+1,\frac n2+1+\nu_0'+s\}}<p<\frac{n}{\max\{0, \frac n2-\nu_0',\frac n2-1-\nu_0+s\}}
\end{equation}
\end{proposition}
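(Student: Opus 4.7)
The plan is to estimate the composed kernel
\[
K(z,z'')=\int_X G(z,z')Q(z',z'')\,d\mu(z')
\]
by partitioning the $z'$-integration according to the relative sizes of $r,r',r''$, then applying Lemma \ref{lem:tfpq} on the off-diagonal pieces and a Schur-type bound based on Lemmas \ref{lem:d} and \ref{lem:id} on the near-diagonal ones. This will reduce Proposition \ref{prop:mainest} to a finite list of model operators whose $L^p$ boundedness has already been assembled in Sections 2 and 3.

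Using the cut-offs $\chi$ introduced in the proof of Lemma \ref{lem:kernel}, the first step is to decompose $K=\sum_{i,j}K_{ij}$, where $(i,j)$ ranges over the nine possibilities in $\{\ll,\sim,\gg\}^2$, with $i$ comparing $r$ to $r'$ and $j$ comparing $r'$ to $r''$. In each purely off-diagonal cell, \eqref{equ:gzz} and \eqref{equ:qzz} supply pure power bounds; after the angular integrals are controlled by compactness of $Y$ and the radial one is performed by Fubini, one is left with
\[
K_{ij}(z,z'')\lesssim r^{-\alpha_{ij}}(r'')^{-\beta_{ij}},\qquad \alpha_{ij}+\beta_{ij}=n,
\]
where the exponents depend explicitly on $s,\nu_0$ and $\nu_0'$. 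Lemma \ref{lem:tfpq}, with the parameter there set to $0$, then gives $L^p\to L^p$ boundedness of the partial operator $T_{ij}$ on an interval of $p$ dictated by $\alpha_{ij}$ and $\beta_{ij}$; a careful bookkeeping shows that the intersection of these intervals is exactly the window \eqref{con:p}, and the three extremal endpoints on each side are produced by the cells in which $G$ or $Q$ picks up the weights governed by $\nu_0$, $\nu_0'$, or the cross-exponent $s$.

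The remaining near-diagonal cells, those with $i=\sim$ or $j=\sim$, require a finer argument. The representative case is $r\sim r'\sim r''$, where one is reduced to bounding
\[
K_{\sim\sim}(z,z'')\lesssim r^{-1}\int_{r'\sim r}d(z,z')^{-(n-1+s)}d(z',z'')^{-(n-s)}\,d\mu(z').
\]
A Riesz-convolution estimate adapted from Lemma \ref{lem:id}, using the two-sided comparison of Lemma \ref{lem:d} and splitting according to whether $z'$ is closer to $z$ or to $z''$, yields
\[
K_{\sim\sim}(z,z'')\lesssim r^{-1}d(z,z'')^{-(n-1)}\chi(r/r''),
\]
supported on $r\sim r''$. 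A further application of Lemma \ref{lem:id} with $\alpha=1$ verifies the Schur test
\[
\sup_z\int|K_{\sim\sim}(z,z'')|\,d\mu(z'')+\sup_{z''}\int|K_{\sim\sim}(z,z'')|\,d\mu(z)\lesssim 1,
\]
so the corresponding operator is bounded on every $L^p$. The singly diagonal cells are handled by the same scheme but with only one Riesz factor present. The principal obstacle I foresee is the combinatorial bookkeeping in the nine-case split, especially matching the $\nu_0$ and $\nu_0'$ contributions to the three extremal endpoints of \eqref{con:p}; the hypothesis $0<s<1$ is precisely what keeps every intermediate exponent admissible for Lemma \ref{lem:tfpq} and guarantees integrability of the diagonal kernel in the Schur estimate.
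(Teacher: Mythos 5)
Your plan is essentially the proof the paper gives: the same kernel bounds \eqref{equ:gzz}--\eqref{equ:qzz} from Lemma \ref{lem:kernel}, pure power bounds integrated in $r'$ and fed into Lemma \ref{lem:tfpq} (with the Sobolev index set to $0$) off the diagonal, and Lemma \ref{lem:id} together with a Schur test on the comparable-radii pieces; the exponents you would obtain indeed sum to $n$ and reproduce the window \eqref{con:p}. Two organizational points, both fixable, separate your write-up from the paper's. First, your nine cells are indexed by ($r$ vs.\ $r'$, $r'$ vs.\ $r''$), but Lemma \ref{lem:tfpq} requires the composed kernel to be supported on one side of the $r$--$r''$ diagonal; in the two mixed cells $r\ll r'\gg r''$ and $r\gg r'\ll r''$ the relation between $r$ and $r''$ is undetermined, so you cannot apply Lemma \ref{lem:tfpq} to them as stated. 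You must further split by $r$ vs.\ $r''$ and treat the $r\sim r''$ sub-pieces by the Schur test -- this is exactly why the paper organizes the whole argument by the trichotomy $2r\le r''/2$, $r/2>2r''$, $r''/4\le r\le 4r''$ first and only then decomposes in $r'$. Second, on the fully diagonal cell $r\sim r'\sim r''$ the paper does not prove your pointwise composition bound $K_{\sim\sim}\lesssim r^{-1}d(z,z'')^{-(n-1)}$; it verifies the Schur condition directly by Fubini, applying Lemma \ref{lem:id} twice (with $\alpha=s$ and $\alpha=1-s$), which is simpler. Your composition-of-Riesz-kernels bound is true on the cone, but it needs the ball-volume bound of Lemma \ref{lem:d} and a dyadic-shell argument beyond a literal application of Lemma \ref{lem:id}; since it buys nothing over the direct Schur computation, you may as well follow the paper's shortcut. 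With these adjustments your scheme coincides with the paper's proof, including the case analysis (e.g.\ the sign of $1+\nu_0-\nu_0'-s$) that produces the three extremal exponents in \eqref{con:p}.
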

We postpone the proof for a moment. Note $\nu_0>1/(n-1)$, $\nu_0'=(n-2)/2$ and $s=2/(n-1)$,   the $p=2(n-1)/(n+1)$ satisfies the condition \eqref{con:p}, hence
it proves Proposition \ref{P1} once we have shown this proposition. \vspace{0.2cm}

{\bf The proof of  Proposition \ref{prop:mainest}.}
We divide the kernel $K(z,z'')$ into several cases. \vspace{0.2cm}

{\bf Case1: $2r\leq\frac{r''}{2}$.}
A simple computation shows
\begin{align*}
K(z,z'')=&\Big(\int_{r'<\frac{r}{2}}+\int_{\frac{r}{2}\leq r'\leq2r}+\int_{r'>2r}\Big)G(z,z')Q(z',z'')\;d\mu(z')\\
=&K_{1,1}(z,z'')+K_{1,2}(z,z'')+K_{1,3}(z,z'').
\end{align*}
{\bf The estimate of $K_{1,1}(z,z'')$:} In this case, since $r'<\frac{r}{2}<\frac{r''}{2}$, we have
$$G(z,z')\lesssim  r^{-n-s}(r'/r)^{1-\frac n2+\nu'_0},$$
and
$$Q(z',z'')\lesssim r''^{-n}r'^{s}(r'/r'')^{1-\frac n2+\nu_0-s}.$$
Hence, we get

\begin{align*}
K_{1,1}(z,z'')\lesssim&\int_{r'<\frac{r}{2}} r^{-n-s}(r'/r)^{1-\frac n2+\nu'_0}r''^{-n}r'^{s}(r'/r'')^{1-\frac n2+\nu_0-s}\;d\mu(z')\\
\lesssim&r^{-\frac{n}{2}-1-s-\nu_0'}r''^{-\frac{n}{2}-1-\nu_0+s}\int_{r'<\frac{r}{2}}
r'^{2-n+\nu_0'+\nu_0}\;d\mu(z')\\
\lesssim&r^{-\frac{n}{2}+1+\nu_0-s}r''^{-\frac{n}{2}-1-\nu_0+s}
\end{align*}
Thus, an application of Lemma \ref{lem:tfpq} yields the $L^p$-boundedness for $K_{1,1}(z,z'')$. \vspace{0.2cm}

{\bf The estimate of $K_{1,2}(z,z'')$:} In this case, since $\frac{r}2<r'<2r<\frac{r''}{2}$, we have
$$G(z,z')\lesssim   r^{-1}d(z,z')^{-(n-1+s)},$$
and
$$Q(z',z'')\lesssim r''^{-n}r'^{s}(r'/r'')^{1-\frac n2+\nu_0-s}.$$

Hence, by Lemma \ref{lem:id}, we obtain 
\begin{align*}
K_{1,2}(z,z'')\lesssim&\int_{\frac{r}{2}\leq r'\leq2r} r^{-1}d(z,z')^{-(n-1+s)}r''^{-n}r'^{s}(r'/r'')^{1-\frac n2+\nu_0-s}\;d\mu(z')\\
\lesssim&r^{-1}r''^{-\frac{n}{2}-1-\nu_0+s}\int_{\frac{r}{2}\leq r'\leq2r}
d(z,z')^{-(n-1+s)}r'^{1-\frac{n}2+\nu_0}\;d\mu(z')\\
\lesssim&r^{-\frac{n}{2}+1+\nu_0-s}r''^{-\frac{n}{2}-1-\nu_0+s}.
\end{align*}
Hence, by using Lemma \ref{lem:tfpq} again, we obtain the $L^p$-boundedness for $K_{1,2}(z,z'')$.  \vspace{0.2cm}

{\bf The estimate of $K_{1,3}(z, z'')$:}  We can further decompose
\begin{align*}
K_{1,3}(z,z'')=&\Big(\int_{2r<r'<\frac{r''}2}+\int_{\frac{r''}2\leq r'\leq2r''}+\int_{r'>2r''}\Big)G(z,z')Q(z',z'')\;d\mu(z')\\
=&K_{1,31}(z,z'')+K_{1,32}(z,z'')+K_{1,33}(z,z'').
\end{align*}

We first consider $K_{1,31}(z,z'')$. In this case, we have $2r<r'<\frac{r''}2.$ Thus,
$$G(z,z')\lesssim r'^{-n}r^{-s}(r/r')^{s-\frac n2+\nu'_0} $$
and
$$Q(z',z'')\lesssim r''^{-n}r'^{s}(r'/r'')^{1-\frac n2+\nu_0-s}.$$
This implies
\begin{align*}
K_{1,31}(z,z'')\lesssim&\int_{2r<r'<\frac{r''}2}r'^{-n}r^{-s}(r/r')^{s-\frac n2+\nu'_0} r''^{-n}r'^{s}(r'/r'')^{1-\frac n2+\nu_0-s}\;d\mu(z')\\
\lesssim&r^{-\frac{n}2+\nu_0'}r''^{-\frac{n}2-1-\nu_0+s}\int_{2r<r'<\frac{r''}2}r'^{1-n-s-\nu_0'+\nu_0}\;d\mu(z')\\
\lesssim&\begin{cases}r^{-\frac n2+\nu_0'}r''^{-\frac n2-\nu_0'},\quad -s+\nu_0-\nu_0'+1>0;\\
r^{-\frac{n}2+\nu_0+1-s}r''^{-\frac{n}2-1-\nu_0+s}\quad -s+\nu_0-\nu_0'+1<0.
\end{cases}
\end{align*}
Thus, an application of Lemma \ref{lem:tfpq} yields the $L^p$-boundedness for $K_{1,31}(z,z'')$.

Next consider $K_{1,32}(z,z'')$. In this term, we have $2r<\frac{r''}2<r'<2r''.$ Thus,
$$G(z,z')\lesssim r'^{-n}r^{-s}(r/r')^{s-\frac n2+\nu'_0}$$
and
$$Q(z',z'')\lesssim d(z',z'')^{-(n-s)}.$$
This implies
\begin{align*}
K_{1,32}(z,z'')\lesssim&\int_{\frac{r''}2<r'<2r''}r'^{-n}r^{-s}(r/r')^{s-\frac n2+\nu'_0} d(z',z'')^{-(n-s)}\;d\mu(z')\\
\lesssim&r^{-\frac{n}2+\nu_0'}\int_{\frac{r''}2<r'<2r''}r'^{-\frac n2-\nu'_0-s} d(z',z'')^{-(n-s)}\;d\mu(z')\\
\lesssim&r^{-\frac{n}2+\nu_0'}r''^{-\frac{n}2-\nu_0'}.
\end{align*}
Finally, we consider $K_{1,33}(z,z'')$.  In this case, we have $2r<2r''<r'.$ Thus,
$$G(z,z')\lesssim r'^{-n}r^{-s}(r/r')^{s-\frac n2+\nu'_0}$$
and
$$Q(z',z'')\lesssim r'^{-n+s}(r''/r')^{1-\frac n2+\nu_0}.$$
Since $\nu_0'>(n-2)/2$, this implies
\begin{align*}
K_{1,33}(z,z'')\lesssim&\int_{r'>2r''}r'^{-n}r^{-s}(r/r')^{s-\frac n2+\nu'_0} r'^{-n+s}(r''/r')^{1-\frac n2+\nu_0}\;d\mu(z')\\
\lesssim&r^{-\frac{n}2+\nu_0'} r''^{1-\frac{n}2+\nu_0}\int_{r'>2r''}r'^{-(n+\nu'_0+\nu_0+1)}\;d\mu(z')\\
\lesssim&r^{-\frac{n}2+\nu_0'}r''^{-\frac{n}2-\nu_0'}.
\end{align*}
Therefore, by using Lemma \ref{lem:tfpq}, we obtain the boundedness of $K_{1,3}$. In sum,  in the case $2r<\frac{r''}2$, we prove $K(z,z'')$ is bounded as an operator on $L^p(X)$ provided
\begin{equation}
p<\frac{n}{\max\{0, \frac n2-\nu_0',\frac n2-1-\nu_0+s\}}.
\end{equation}

{\bf Case 2: $\frac{r}2>2r''.$} We decompose
\begin{align*}
K(z,z'')=&\Big(\int_{r'<\frac{r''}{2}}+\int_{\frac{r''}{2}\leq r'\leq2r''}+\int_{r'>2r''}\Big)G(z,z')Q(z',z'')\;d\mu(z')\\
=&K_{2,1}(z,z'')+K_{2,2}(z,z'')+K_{2,3}(z,z'').
\end{align*}

{\bf The estimate of $K_{2,1}$:} In this region, we have $r'<\frac{r''}2<\frac{r}2.$ And so
$$G(z,z')\lesssim r^{-n-s}(r'/r)^{1-\frac n2+\nu'_0}$$
and
$$Q(z',z'')\lesssim r''^{-n}r'^{s}(r'/r'')^{1-\frac n2+\nu_0-s}.$$
Hence, we get
\begin{align*}
K_{2,1}(z,z'')\lesssim&\int_{r'<\frac{r''}{2}}r^{-n-s}(r'/r)^{1-\frac n2+\nu'_0}r''^{-n}r'^{s}(r'/r'')^{1-\frac n2+\nu_0-s}\;d\mu(z')\\
\lesssim&r^{-\frac{n}{2}-s-1-\nu_0'}r''^{-\frac{n}{2}-1-\nu_0+s}\int_{r'<\frac{r''}{2}}
r'^{2-n+\nu_0'+\nu_0}\;d\mu(z')\\
\lesssim&r^{-\frac{n}{2}-s-1-\nu_0'}r''^{-\frac{n}{2}+s+1+\nu_0'}.
\end{align*}

{\bf The estimate of $K_{2,2}$:} In this region, we have $\frac{r''}2<r'<2r''<\frac{r}2.$ And so
$$G(z,z')\lesssim r^{-n-s}(r'/r)^{1-\frac n2+\nu'_0}$$
and
$$Q(z',z'')\lesssim d(z',z'')^{-(n-s)}.$$
Hence, we get
\begin{align*}
K_{2,1}(z,z'')\lesssim&\int_{\frac{r''}{2}<r'<2r''} r^{-n-s}(r'/r)^{1-\frac n2+\nu'_0}d(z',z'')^{-(n-s)}\;d\mu(z')\\
\lesssim&r^{-\frac{n}{2}-1-s-\nu_0'}\int_{\frac{r''}{2}<r'<2r''}
r'^{1-\frac n2+\nu'_0}d(z',z'')^{-(n-s)}\;d\mu(z')\\
\lesssim&r^{-\frac{n}{2}-1-\nu_0'-s}r''^{-\frac{n}{2}+1+\nu_0'+s}.
\end{align*}

{\bf The estimate of $K_{2,3}$:} We further decompose
\begin{align*}
K_{2,3}(z,z'')=&\Big(\int_{2r''<r'<\frac{r}2}+\int_{\frac{r}2\leq r'\leq2r}+\int_{r'>2r}\Big)G(z,z')Q(z',z'')\;d\mu(z')\\
=&K_{2,31}(z,z'')+K_{2,32}(z,z'')+K_{2,33}(z,z'').
\end{align*}

{\bf The contribution of $K_{2,31}$:}
 In this region, we have $2r''<r'<\frac{r}2.$ And so
$$G(z,z')\lesssim r^{-n-s}(r'/r)^{1-\frac n2+\nu'_0}$$
and
$$Q(z',z'')\lesssim r'^{-n+s}(r''/r')^{1-\frac n2+\nu_0}.$$
Hence, we get
\begin{align*}
K_{2,31}(z,z'')\lesssim&\int_{2r''<r'<\frac{r}2} r^{-n-s}(r'/r)^{1-\frac n2+\nu'_0} r'^{-n+s}(r''/r')^{1-\frac n2+\nu_0}\;d\mu(z')\\
\lesssim&r^{-\frac{n}{2}-1-s-\nu_0'}r''^{1-\frac{n}2+\nu_0}\int_{2r''<r'<\frac{r}2}
r'^{-n+\nu'_0+s-\nu_0}\;d\mu(z')\\\lesssim&\begin{cases}r^{-\frac n2-\nu_0-1}r''^{1-\frac n2+\nu_0},\quad s-\nu_0+\nu_0'-1>0;\\
r^{-\frac{n}2-\nu_0'-1-s}r''^{-\frac{n}2+1+s+\nu_0'}\quad s-\nu_0+\nu_0'-1<0.
\end{cases}
\end{align*}

{\bf The contribution of $K_{2,32}$:}
 In this region, we have $2r''<\frac r2<r'<2r.$ And so
$$G(z,z')\lesssim r^{-1}d(z,z')^{-(n-1+s)}$$
and
$$Q(z',z'')\lesssim r'^{-n+s}(r''/r')^{1-\frac n2+\nu_0}.$$
Hence, we get
\begin{align*}
K_{2,32}(z,z'')\lesssim&\int_{\frac r2<r'<2r} r^{-1}d(z,z')^{-(n-1+s)} r'^{-n+s}(r''/r')^{1-\frac n2+\nu_0}\;d\mu(z')\\
\lesssim&r^{-\frac{n}{2}-2+s-\nu_0}r''^{1-\frac{n}2+\nu_0}\int_{\frac r2<r'<2r}
d(z,z')^{-(n-1+s)}\;d\mu(z')\\\lesssim&r^{-\frac{n}{2}-1-\nu_0}r''^{1-\frac{n}2+\nu_0}.\end{align*}

{\bf The contribution of $K_{2,33}$:}
 In this region, we have $2r''<2r<r'.$ And so
$$G(z,z')\lesssim r'^{-n}r^{-s}(r/r')^{s-\frac n2+\nu'_0}$$
and
$$Q(z',z'')\lesssim r'^{-n+s}(r''/r')^{1-\frac n2+\nu_0}.$$
Hence, we get
\begin{align*}
K_{2,33}(z,z'')\lesssim&\int_{2r<r'} r'^{-n}r^{-s}(r/r')^{s-\frac n2+\nu'_0} r'^{-n+s}(r''/r')^{1-\frac n2+\nu_0}\;d\mu(z')\\
\lesssim&r^{-\frac{n}{2}+\nu_0'}r''^{1-\frac{n}2+\nu_0}\int_{2r<r'}
r'^{-n-\nu'_0-\nu_0-1}\;d\mu(z')\\\lesssim&r^{-\frac{n}{2}-1-\nu_0}r''^{1-\frac{n}2+\nu_0}.\end{align*}
Overall, in the case $\frac{r}2>2r$, by using Lemma \ref{lem:tfpq}, we show $K(z,z'')$ is bounded as an operator on $L^p(X)$ provided
\begin{equation}
p>\frac{n}{\min\{n, \frac n2+\nu_0+1,\frac n2+1+\nu_0'+s\}}.
\end{equation}

{\bf Case 3: $\frac{r''}4\leq r\leq4r''$.} We decompose
\begin{align*}
K(z,z'')=&\Big(\int_{r'<\frac{r}{2}}+\int_{\frac{r}{2}\leq r'\leq2r}+\int_{r'>2r}\Big)G(z,z')Q(z',z'')\;d\mu(z')\\
=&K_{3,1}(z,z'')+K_{3,2}(z,z'')+K_{3,3}(z,z'').
\end{align*}

{\bf The estimate of $K_{3,1}$:} In this region, we have $r'<\frac{r}2\leq 2r''.$ If $r'\geq \frac{r''}2$, then one has $r\sim r'\sim r''$ which can be done as treating $K_{3,2}$.
 Hence we only consider $r'<\frac{r''}2$, and so
$$G(z,z')\lesssim r^{-n-s}(r'/r)^{1-\frac n2+\nu'_0}$$
and
$$Q(z',z'')\lesssim r''^{-n}r'^{s}(r'/r'')^{1-\frac n2+\nu_0-s}.$$
Hence, we get
\begin{align*}
K_{3,1}(z,z'')\lesssim&\int_{r'<\frac{r''}{2}} r^{-n-s}(r'/r)^{1-\frac n2+\nu'_0}r''^{-n}r'^{s}(r'/r'')^{1-\frac n2+\nu_0-s}\;d\mu(z')\\
\lesssim&r^{-\frac{n}{2}-1-s-\nu_0'}r''^{-\frac{n}{2}-1-\nu_0+s}\int_{r'<\frac{r''}{2}}
r'^{2-n+\nu_0'+\nu_0}\;d\mu(z')\\
\lesssim&r^{-\frac{n}{2}-1-\nu_0'-s}r''^{-\frac{n}{2}+1+\nu_0'+s}.
\end{align*}
When $r\sim r''$, it is easy to prove that
\begin{align*}
\int_{r\sim r''}K_{3,1}(z,z'') d\mu(z'')\lesssim1;   \quad \int_{r\sim r''}K_{3,1}(z,z'') d\mu(z)\lesssim1.
\end{align*}

{\bf The estimate of $K_{3,2}$:} In this region, we have $r'\sim r\sim r''.$ And so
$$G(z,z')\lesssim r^{-1}d(z,z')^{-(n-1+s)}$$
and
$$Q(z',z'')\lesssim d(z',z'')^{-(n-s)}.$$
Therefore, we prove
\begin{align*}
\int_{r\sim r''}K_{3,2}(z,z'') d\mu(z'')\lesssim&\int_{r\sim r''} \int_{r'\sim r} r^{-1}d(z,z')^{-(n-1+s)}d(z',z'')^{-(n-s)}\;d\mu(z')d\mu(z'')\\
\lesssim&r^{-1}\int_{r'\sim r} d(z,z')^{-(n-1+s)}\int_{r'\sim r''} d(z',z'')^{-(n-s)}d\mu(z'')\;d\mu(z')\\
\lesssim&1.
\end{align*}
Similarly, we can prove $\int_{r\sim r''}K_{3,2}(z,z'') d\mu(z)\lesssim 1$.

{\bf The estimate of $K_{3,3}$:} In this region, we have $r'>2r\geq \frac {r''}2.$ Similarly, we only consider $r'>2r''$. And so
$$G(z,z')\lesssim r'^{-n}r^{-s}(r/r')^{s-\frac n2+\nu'_0}$$
and
$$Q(z',z'')\lesssim r'^{-n+s}(r''/r')^{1-\frac n2+\nu_0}.$$
Hence, we get
\begin{align*}
K_{3,3}(z,z'')\lesssim&\int_{r'>2r}r'^{-n}r^{-s}(r/r')^{s-\frac n2+\nu'_0} r'^{-n+s}(r''/r')^{1-\frac n2+\nu_0}\;d\mu(z')\\
\lesssim&r^{-\frac{n}{2}+\nu_0'}r''^{1-\frac{n}{2}+\nu_0}\int_{r'>2r}
r'^{-n-1-\nu_0'-\nu_0}\;d\mu(z')\\
\lesssim&r^{-\frac{n}{2}-1-\nu_0}r''^{1-\frac{n}{2}+\nu_0}.
\end{align*}
Note that $r\sim r''$, it is easy to prove that
\begin{align*}
\int_{r\sim r''}K_{3,1}(z,z'') d\mu(z'')\lesssim1;   \quad \int_{r\sim r''}K_{3,1}(z,z'') d\mu(z)\lesssim1.
\end{align*}
To conclude, in the case that $r\sim r''$, by using Schur test lemma, we prove $K(z,z'')$ is bounded on $L^p(X)$ for all $1<p<\infty$.
Collecting all the cases, therefore we finish the proof of Proposition \ref{prop:mainest}.

\vspace{0.2cm}

\section{Strichartz estimates for wave equation with $\LL_0$}

In this section, we prove the Strichartz estimates for wave equation associated with $\LL_0$, i.e. without potential, that is, the result (i) of Theorem \ref{thm:Strichartz} when $V=0$.
The argument here is close to  \cite{HZ,Z2} but with necessary modifications.
For the sake of being self-contained and convenient, we sketch the main steps.
\subsection{Microlocalized propagator}

We begin to decompose the half-wave propagator by using the partition of unity $1=\sum\limits_{k\in\Z}\varphi(2^{-k}\lambda)$ as in \eqref{dp}.
Define
\begin{equation}\label{Uk}
\begin{split}
U_k(t) &= \int_0^\infty e^{it\lambda}
\varphi(2^{-k}\lambda) dE_{\sqrt{\LL_0}}(\lambda),\quad k\in\Z.
\end{split}
\end{equation}
We further microlocalize (in phase space) the half-wave propagators adapting to the partition of
unity operator
\begin{equation}\label{Uitj}
\begin{split}
U_{j,k}(t) = \int_0^\infty e^{it\lambda} \varphi(2^{-k}\lambda) Q_j(\lambda)
dE_{\sqrt{\LL_0}}(\lambda), \quad 0 \leq j \leq N,
\end{split}\end{equation}
where $Q_j(\lambda)$ is as in Proposition \ref{prop:localized spectral measure}.
Then the operator $U_{j,k}(t) U_{j,k}(s)^*$
is given
\begin{equation}
U_{j,k}(t) U_{j,k}(s)^* =  \int e^{i(t-s)\lambda} \varphi(2^{-k}\lambda)   Q_j(\lambda)
dE_{\sqrt{\LL_0}}(\lambda) Q_j(\lambda)^*.
\label{Uiti2}\end{equation}

\subsection{$L^2$-estimate and dispersive estimate}
In this subsection, we prove the two key estimates, i.e. the energy estimate and dispersive estimate. Before stating our result, we recall two results in \cite{HZ}.
The results can be  directly applied to our setting if we consider the problems on the region away from the cone tip, in which as mentioned in the introduction they almost are the same.
Recall that $Q_j$ with $j\geq1$ are micro-localized away from the cone tip.\vspace{0.2cm}

By using \cite[Lemma 8.2]{HZ} (see also \cite[Lemmas 5.3 and 5.4]{GH}), we can divide $(j,j')$, $1 \leq j,j' \leq N$ into three classes
$$
\{ 1, \dots, N \}^2 = J_{near} \cup J_{not-out} \cup J_{not-inc},
$$
so that
\begin{itemize}
\item if $(j,j') \in J_{near}$, then $Q_j(\lambda) \specl Q_{j'}(\lambda)^*$ satisfies the conclusions of Proposition~\ref{prop:localized spectral measure};

\item if $(j,j') \in J_{non-inc}$, then $Q_j(\lambda)$ is not incoming-related to $Q_{j'}(\lambda)$ in the sense that no point in the operator wavefront set (microlocal support)
of $Q_j(\lambda)$ is related to a point in the operator wavefront
set of $Q_{j'}(\lambda)$ by backward bicharacteristic flow;

\item if $(j,j') \in J_{non-out}$, then $Q_j(\lambda)$ is not outgoing-related to $Q_{j'}(\lambda)$ in the sense that no point in the operator wavefront set  of
$Q_j(\lambda)$ is related to a point in the operator wavefront set
of $Q_{j'}(\lambda)$ by forward bicharacteristic flow.
\end{itemize}

And we further exploit the not-incoming or not-outgoing property of
$Q_j(\lambda)$ with respect to $Q_{j'}(\lambda)$ to obtain the Schwartz kernel of  $Q_j(\lambda) \specl Q_{j'}(\lambda)^*$

\begin{lemma}\label{lem:sign}For $\lambda> 0$ and $(j,j') \in J_{non-out}$.  Then,
we can write the Schwartz kernel of $Q_j(\lambda) \specl Q_{j'}(\lambda)^*$ as a multiple of $|dg dg'|^{1/2} |d\lambda|$
as the sum of a finite number of terms of the form
\begin{gather}
 \int_{\R^k} e^{i\lambda r\Phi(y,y',\sigma,r,v)}\lambda^{n-1+k/2} r^{-(n-1)/2+k/2}a(\lambda,y,y',\sigma,r,v)dv \quad \mathrm{ or } \label{QiEQj-hi}\\
  \int_{\R^{k-1}} \int_0^\infty e^{i\lambda r\Phi(y,y',\sigma,r,v,s)}\lambda^{n-1+k/2} \big(\frac{1}{rs}\big)^{(n-1)/2 - k/2}
  s^{n-2} a(\lambda,y,y',\sigma,r,v,s) \, ds \, dv  \label{QiEQj-s}
  \end{gather}
in the region $\sigma = r'/r \leq 2$, $r \geq \delta$, or
\begin{gather}
  \int_{\R^k} e^{i\lambda \Phi(z, z', v)} \lambda^{n-1+k/2} a(\lambda, z, z', v) \, dv \label{QiEQj}
\end{gather}
in the region $r \leq \delta, r' \leq \delta$,
where in each case, $\Phi < - \epsilon < 0$ and $a$ is a smooth function compactly supported in the $v$ and $s$ variables (where present), such that $|(\lambda\partial_\lambda)^N a|\leq C_N$.
In each case, we may assume that $k \leq n-1$; if $k=0$ in \eqref{QiEQj-hi} or \eqref{QiEQj}, or $k=1$ in \eqref{QiEQj-s} then there is no variable $v$, and no $v$-integral. Again, the key point is that in each expression, the phase function is strictly negative.

If, instead, $Q_j$ is not incoming-related to $Q_{j'}$, then the same
conclusion holds with the reversed sign: the Schwartz kernel can be
written as a finite sum of terms with a strictly positive phase
function.
\end{lemma}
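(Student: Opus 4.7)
The plan is to derive the representation from the factorization $dE_{\sqrt{\LL_0}}(\lambda) = (2\pi)^{-1} P(\lambda) P(\lambda)^*$ where $P(\lambda)$ is the Poisson operator on the metric cone, combined with a parametrix construction for $Q_j(\lambda) P(\lambda)$ adapted to the three geometric regimes that appear in the statement: the conic-infinity regime with $r'/r \leq 2$ and $r \geq \delta$, the scattering regime which requires an extra $s$-variable, and the compact regime $r, r' \leq \delta$ near the cone tip. In each regime the Poisson kernel has a known oscillatory integral representation whose phase encodes the distance to the sphere at infinity (equivalently, the bicharacteristic flow-out of the boundary). Composing with the second Poisson factor produces an oscillatory integral whose phase is the difference of two such phases, and the microlocal support condition on $Q_j,Q_{j'}$ translates into a uniform sign condition on this difference.

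First, I would record the microlocal structure of $P(\lambda)$: away from the cone tip, $P(\lambda)$ is a Legendrian distribution associated to the outgoing Legendre submanifold, with phase function of the form $\lambda r \Phi_+(y,y',\sigma,v)$ (and an analogous $s$-parameterized version near the corner at infinity), while near the cone tip it can be written as a standard oscillatory integral in the $z,z'$ variables with amplitude symbolic in $\lambda$. This is essentially the content of Proposition \ref{prop:localized spectral measure} applied to $P(\lambda)P(\lambda)^*$; the three forms \eqref{QiEQj-hi}, \eqref{QiEQj-s}, \eqref{QiEQj} in the present lemma correspond exactly to parametrix representations in these three regions. Next, I would use the $TT^*$-type decomposition $Q_j\, dE\, Q_{j'}^* = (2\pi)^{-1} (Q_j P)(Q_{j'} P)^*$ and compose the oscillatory integrals: the resulting phase is $\lambda r[\Phi_+(y,y',\sigma,v) - \Phi_+(y,y'',\sigma',v')]$ in the first regime, and analogously in the other two.

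The key step, and the one that produces the sign assertion, is to exploit the geometric meaning of "non-outgoing-related" from Lemma 8.2 of \cite{HZ}. If $Q_j$ is not outgoing-related to $Q_{j'}$, then the microlocal support of $Q_j(\lambda)$ at frequency $\lambda$ is disjoint from the forward bicharacteristic image of the microlocal support of $Q_{j'}(\lambda)$. Translating this into the language of the Legendre phase, it means that on the support of the amplitude the two phases $\Phi_+(y,y',\sigma,v)$ and $\Phi_+(y,y'',\sigma',v')$ are separated by a definite amount with a fixed sign; the not-outgoing case gives $\Phi < -\epsilon < 0$, while the not-incoming case reverses the sign and yields $\Phi > \epsilon > 0$. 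The symbol bounds $|(\lambda\partial_\lambda)^N a|\leq C_N$ are inherited directly from the corresponding bounds on the amplitudes of $Q_j P$ and from the derivative estimates \eqref{bean}–\eqref{beanc} in Proposition \ref{prop:localized spectral measure}, since the composition does not produce growth in $\lambda$.

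The main technical obstacle will be the bookkeeping at the two transition regions: between the scattering regime (where the extra variable $s = r'/r$ appears, accounting for the factor $s^{n-2}$ coming from the Jacobian of the polar-type change of coordinates near conic infinity) and the interior regime $r\sim r'$, and between the interior and the compact regime $r,r'\leq \delta$ near the tip. In each transition a cut-off must be inserted and the resulting commutator terms must be absorbed into a residual term that is negligible; here one uses the rapid decay estimate \eqref{beans} and the fact that the not-outgoing property is an open condition, so the cutoffs can be chosen to preserve it. The bound $k \leq n-1$ is simply a parameter count: the fiber dimension of the Legendrian never exceeds $n-1$, with the convention that $k=0$ (or $k=1$ in \eqref{QiEQj-s}) corresponds to the case where the stationary phase has already eliminated all oscillatory variables except the one that records the phase sign.
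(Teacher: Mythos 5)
Your outline follows the same microlocal philosophy that underlies this lemma, but it has a genuine gap at precisely the point the lemma is about: the sign of the phase. You assert that the non-outgoing relation ``translates into a uniform sign condition'' on the difference of the two Poisson phases, but that translation \emph{is} the content of the statement, and nothing in your sketch proves it. In the paper this step is not re-derived at all: the proof observes that $(j,j')\in J_{non-out}$ forces $j,j'\ge 1$, so both cutoffs are microlocalized away from the conic singularity, and then the representation with strictly negative (resp.\ positive) phase is quoted from Lemmas 8.3 and 8.5 of \cite{HZ}, the exact scaling symmetry of the cone replacing the separate high/low-frequency statements needed there. Those lemmas are where the actual work happens: the explicit phase functions parametrizing the Legendrian structure of the spectral measure are analyzed region by region (this is also where the precise powers $\lambda^{n-1+k/2}$, $r^{-(n-1)/2+k/2}$ and the factor $s^{n-2}$ in \eqref{QiEQj-hi}--\eqref{QiEQj-s} come from), and the outgoing/incoming relation is shown there to pin down the sign uniformly, $\Phi<-\epsilon$. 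Your alternative route through $dE_{\sqrt{\LL_0}}(\lambda)=(2\pi)^{-1}P(\lambda)P(\lambda)^*$ does not bypass this: after composing the two Poisson kernels one still has an oscillatory integral over the boundary variable, and extracting the stated forms together with the bounds $|(\lambda\partial_\lambda)^N a|\le C_N$ and the uniform sign requires exactly the phase analysis you are implicitly appealing to rather than supplying.

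A second, smaller misstep: you propose to handle the region $r,r'\le\delta$ by a ``standard oscillatory integral'' parametrix ``near the cone tip.'' The tip is a metric singularity and no such parametrix is constructed in the paper; the point exploited in the proof is that non-outgoing and non-incoming pairs never involve $Q_0$, the only cutoff meeting the tip, so the singular region never has to be treated by this machinery (terms involving $Q_0$ are covered by the non-oscillatory bound \eqref{beanc} and the dispersive estimate \eqref{UiUjnear}). Your bookkeeping of transition regions and the inheritance of symbol estimates is fine as far as it goes, but without a proof (or a correct citation) of the sign determination, the proposal does not establish the lemma.
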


\begin{remark}
For $\sigma \geq 1/2$, the Schwartz kernel has a similar description, as follows immediately from the symmetry of the kernel under interchanging the left and right variables.
\end{remark}

\begin{proof} Note $(j,j')\in J_{non-out}$, thus $j,j'\geq1$.
Since $j,j'\geq1$ away from cone tip, this result is essentially proved in \cite[Lemma 8.3, Lemma 8.5]{HZ}.
Since our setting has scaling symmetry, we do not need to state the result in high and low frequency respectively. The key point is that the sign of the phase function
can be determined.
\end{proof}

The main results of this subsection are
the $L^2$-estimate and dispersive estimates.
\begin{proposition}\label{prop:Dispersive} Let $U_{j,k}(t)$ be defined in \eqref{Uitj}.
Then there exists a constant $C$ independent of $t, z, z'$ for all
$j,j'\geq 0, k\in\Z$ such that
\begin{equation}\label{energy}
\|U_{j,k}(t)\|_{L^2\rightarrow L^2}\leq C,\end{equation}
and the following dispersive estimates on $U_{j,k}(t) U_{j',k}(s)^*$ hold:

\begin{itemize}
\item If $(j,j') \in J_{near}$ or $(j,j')=(0,j'), (j,0)$, then for all $t \neq s$ we have
\begin{equation}
\big\|U_{j,k}(t)U^*_{j',k}(s)\big\|_{L^1\rightarrow L^\infty}\leq C
2^{k(n+1)/2}(2^{-k}+|t-s|)^{-(n-1)/2}, \label{UiUjnear}\end{equation}

\item If $(j,j')$ such that $Q_j$ is not outgoing related to
$Q_{j'}$, and $t<s$, then
\begin{equation}
\big\|U_{j,k}(t)U^*_{j',k}(s)\big\|_{L^1\rightarrow L^\infty}\leq C
2^{k(n+1)/2}(2^{-k}+|t-s|)^{-(n-1)/2}, \label{UiUj}\end{equation}

\item Similarly, if $(j,j')$ such that $Q_j$ is not incoming
related to $Q_{j'}$, and $s<t$, then
\begin{equation}
\big\|U_{j,k}(t)U^*_{j',k}(s)\big\|_{L^1\rightarrow L^\infty}\leq C
2^{k(n+1)/2}(2^{-k}+|t-s|)^{-(n-1)/2}. \label{UiUj2}\end{equation}
\end{itemize}

\end{proposition}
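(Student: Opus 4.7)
The proof rests on a $TT^*$ argument combined with the microlocalized spectral measure from Proposition \ref{prop:localized spectral measure} and the sign-of-phase information from Lemma \ref{lem:sign}. The $L^2$ bound is immediate: since the pseudodifferential cutoffs $Q_j(\lambda)$ are uniformly bounded on $L^2$, the spectral theorem yields
\[
\|U_{j,k}(t)f\|_{L^2}^2 = \Big\langle \int_0^\infty |\varphi(2^{-k}\lambda)|^2 Q_j(\lambda)^* Q_j(\lambda)\,dE_{\sqrt{\LL_0}}(\lambda) f,\,f\Big\rangle \lesssim \|f\|_{L^2}^2,
\]
uniformly in $t$, $k$, $j$.

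For the dispersive bound \eqref{UiUjnear} in the near case, I insert \eqref{beanQ} into the Schwartz kernel of $U_{j,k}(t)U_{j',k}^*(s)$; the $b$ and $c$ pieces are controlled trivially via \eqref{beans}-\eqref{beanc}, while the oscillatory $a_\pm$ pieces reduce to a one-dimensional $\lambda$-integral with phase $e^{i\lambda[(t-s)\pm d(z,z')]}$, amplitude $\lambda^{n-1}a_\pm$, and frequency localization $\lambda\sim 2^k$. The direct estimate using \eqref{bean} gives the bound $2^{kn}(1+2^k d(z,z'))^{-(n-1)/2}$, while $M$-fold integration by parts in $\lambda$ contributes an extra factor $(1+2^k|(t-s)\pm d(z,z')|)^{-M}$. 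A routine case analysis on the three regimes where $d(z,z')$ is small, comparable, or large relative to $|t-s|$ (for each sign) then converts these estimates into \eqref{UiUjnear}. The mixed cases with $j$ or $j'$ equal to $0$ follow by the same scheme from the non-oscillatory representation \eqref{beanQ'} using \eqref{beanc}, the analysis in fact being simpler because the kernel is smooth in $\lambda$.

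For the non-outgoing bound \eqref{UiUj} with $t<s$, I substitute each of the three normal forms \eqref{QiEQj-hi}, \eqref{QiEQj-s}, \eqref{QiEQj} from Lemma \ref{lem:sign} into the kernel and commute the compact $v$- (and $s$-) integrations with the $\lambda$-integration. The combined $\lambda$-phase is $e^{i\lambda[(t-s)+r\Phi]}$ (or $e^{i\lambda[(t-s)+\Phi]}$ near the cone tip), and crucially $\Phi<-\epsilon$ together with $t-s<0$ yields the uniform lower bound $|(t-s)+r\Phi|\geq \tfrac12(|t-s|+\epsilon r)$, so integration by parts in $\lambda$ is purely non-stationary and can be iterated arbitrarily many times. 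The main technical obstacle is the singular prefactor $r^{-(n-1)/2+\ell/2}$ present in \eqref{QiEQj-hi}-\eqref{QiEQj-s}, where $\ell\leq n-1$ is the dimension of the $v$-integration: I tame it by splitting the exponent $M=M_1+M_2$ in the integration-by-parts estimate and invoking $|(t-s)+r\Phi|^{-M}\leq(\epsilon r)^{-M_1}|t-s|^{-M_2}$, with the choices $M_1=\ell/2$ to absorb the $r^{\ell/2}$ factor and $M_2=(n-1)/2$ to generate the temporal decay. This is admissible since $\ell\leq n-1$, and produces $|K|\lesssim 2^{k(n+1)/2}|t-s|^{-(n-1)/2}$ in the range $|t-s|\geq 2^{-k}$; the complementary range $|t-s|\leq 2^{-k}$ is handled by the same splitting with $M_2=0$, delivering the crude bound $2^{kn}$. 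The cone-tip form \eqref{QiEQj} carries no singular $r$-prefactor and needs only the $|t-s|$-part of the splitting. The non-incoming bound \eqref{UiUj2} with $s<t$ follows identically, invoking the positive-phase variant at the end of Lemma \ref{lem:sign}.
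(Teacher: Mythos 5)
Your proposal is correct and follows essentially the same route as the paper: the $L^2$ bound via the spectral theorem and uniform $L^2$-boundedness of $Q_j(\lambda)$, the near and $j=0$ (or $j'=0$) cases by inserting the microlocalized spectral measure \eqref{beanQ}--\eqref{beanc} and running the stationary-phase/integration-by-parts case analysis in $\lambda$ (which the paper delegates to \cite{Z2}), and the non-outgoing/non-incoming cases by exploiting the sign-definite phase of Lemma \ref{lem:sign} so that the $\lambda$-integration is purely non-stationary. The only differences are cosmetic: the paper first rescales to $k=0$ and uses a dyadic decomposition in $(t-s)\lambda$ rather than tracking $2^k$ directly, and your concern about a ``singular prefactor'' is not really needed, since in the region of validity $r\ge\delta$ with $\ell\le n-1$ the factor $r^{-(n-1)/2+\ell/2}$ is already bounded (your splitting of the lower bound $|t-s|+\epsilon r$ handles it anyway, up to replacing the half-integer counts $M_1=\ell/2$, $M_2=(n-1)/2$ by integers).
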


\begin{remark} The dispersive inequalities \eqref{UiUj} and \eqref{UiUj2} are used to prove endpoint to endpoint inhomogeneous Strichartz estimate; see Section \ref{subsec:4.2}.
\end{remark}

\begin{proof}
The inequalities \eqref{energy} and \eqref{UiUjnear} are essentially proved \cite[Section 3]{Z2}. Indeed,
note that the operators $\varphi(2^{-k}\lambda)$ and $Q_j(\lambda)$ are bounded on $L^2$, thus the microlocalized propagator $U_{j,k}(t)$ is bounded from $L^2(X)$ to itself due to the spectral theory on Hilbert space.
From above result, if $(j,j') \in J_{near}$ or $(j,j')=(0,j'), (j,0)$, we have the expression of microlocalized spectral mearsue in Proposition \ref{prop:localized spectral measure} which is same as the one used in \cite{Z2}.
Then by the stationary phase argument, we have
 \begin{equation}\label{dispersive}
\begin{split}
\Big|\int_0^\infty e^{it\lambda}\varphi(2^{-k}\lambda) \big(Q_j(\lambda)
&dE_{\sqrt{\LL_0}}(\lambda)Q_{j'}^*(\lambda)\big)(z,z')
d\lambda\Big|\\&\leq C 2^{k(n+1)/2}(2^{-k}+|t|)^{-(n-1)/2}
\end{split}
\end{equation}
where  $\varphi\in C_c^\infty([1/2, 2])$ and takes value in
$[0,1]$. We refer the reader to \cite[Section 3]{Z2} for details.

We only prove \eqref{UiUj2} since the argument to prove \eqref{UiUj} is analogous. Assume that $Q_j$
is not incoming-related to $Q_{j'}$, and then consider \eqref{UiUj2}.
 By \cite[Lemma 5.3]{HZ}, $U_{j,k}(t) U_{j',k}(s)^*$ is given by
\begin{equation}
\int_0^\infty e^{i(t-s)\lambda} \tilde{\varphi}(2^{-k}\lambda)\big(Q_j(\lambda)
dE_{\sqrt{\mathbf{\LL_0}}}(\lambda)Q^*_{j'}(\lambda)\big)(z,z')d\lambda,\quad \tilde{\varphi}=\varphi^2.
\label{UiUjint}\end{equation} Then we need to show that for $s<t$ and $k\in\Z$
\begin{equation*}
\begin{split}
&\Big|\int_0^\infty e^{i(t-s)\lambda}\tilde{\varphi}(2^{-k}\lambda) \big(Q_j(\lambda)
dE_{\sqrt{\LL_0}}(\lambda)Q^*_{j'}(\lambda)\big)(z,z')
d\lambda\Big|\\&\leq C 2^{k(n+1)/2}(2^{-k}+|t-s|)^{-(n-1)/2}.
\end{split}
\end{equation*}
By scaling, it suffices to show $k=0$, that is
\begin{equation}\label{k=0}
\Big|\int_0^\infty e^{i(t-s)\lambda} \tilde{\varphi}(\lambda) \big(Q_j(\lambda)
dE_{\sqrt{\LL_0}}(\lambda)Q^*_{j'}(\lambda)\big)(z,z')
d\lambda\Big|\leq C (1+|t-s|)^{-(n-1)/2}.
\end{equation}
If $t-s<1$, since $\tilde{\varphi}$ is compactly supported, the estimate follows from the uniform boundedness of \eqref{QiEQj-hi}-\eqref{QiEQj}.
Now we consider $t-s \geq 1$. Let $\phi\in C_c^\infty([\frac12,2])$ be  such that
$\sum_m\phi(2^{-m}(t-s)\lambda)=1$, define
$$\phi_0((t-s)\lambda)=\sum_{m\leq0}\phi(2^{-m}(t-s)\lambda).$$  Plug the decomposition
$$
1 = \phi_0((t-s)\lambda) + \sum_{m \geq 1} \phi_m((t-s)\lambda), \quad \phi_m(\lambda) :=  \phi(2^{-m}\lambda)
$$
 into the integral  \eqref{k=0}. In addition, we substitute for  $Q_j(\lambda)
dE_{\sqrt{\LL_0}}(\lambda)Q^*_{j'}(\lambda)$ one of the
expressions in Lemma~\ref{lem:sign} to obtain
\begin{equation}
\begin{split}
&\Big|\int_0^\infty e^{i(t-s)\lambda} \tilde{\varphi}(\lambda) \phi_0((t-s)\lambda) \big(Q_j(\lambda)
dE_{\sqrt{\LL_0}}(\lambda)Q^*_{j'}(\lambda)\big)(z,z')
d\lambda\Big|\\&\leq \int_0^\infty \lambda^{n-1} \tilde{\varphi}(\lambda)\phi_0((t-s)\lambda) d\lambda\leq C |t-s|^{-n}.
\end{split}
\end{equation}
Hence it implies \eqref{k=0} since $|t-s|>1$.

For $m \geq 1$, we substitute again one of the expressions in Lemma~ \ref{lem:sign}. Since the other cases follow from the similar argument,
we only consider the expression  \eqref{QiEQj}.   Define $\lambdabar = (t-s)\lambda$, we obtain by scaling
\begin{equation*}\begin{gathered}
\int_0^\infty \int_{\R^{k}}e^{i(t-s)\lambda}
e^{i\lambda\Phi(z,z',v)}\lambda^{n-1+k/2}\tilde{\varphi}(\lambda)a(\lambda,z,z',v)\phi_m(
(t-s)\lambda) \, dv \, d\lambda
\\
= (t-s)^{-n - \frac{k}{2}}\int_0^\infty \int_{\R^{k}}
e^{i\big(\lambdabar+\frac{\lambdabar\Phi(z,z',v)}{t-s}\big)}\lambdabar^{n-1+k/2}
\tilde{\varphi}(\frac{\lambdabar}{t-s})a(\frac{\lambdabar}{t-s},y,y',\sigma,v) \phi_m(\lambdabar) \,
dv \, d\lambdabar. \end{gathered} \label{intt}
\end{equation*}
We observe that the overall exponential factor is invariant under the differential operator
$$
L = \frac{-i}{1+ \Phi/(t-s)}
\frac{\partial}{\partial \lambdabar}.
$$
Note that its adjoint is $L^t = -L$,
we apply $L^N$  to the exponential factors, and integrate by
parts $N$ times. Since $\Phi \geq 0$ according to Lemma~\ref{lem:sign}, and
since we have an estimate $|(\lambdabar \partial_{\lambdabar} )^N (\tilde{\varphi} a)| \leq
C_N$,  we gain a factor $\lambdabar^{-1}
\sim 2^{-m}$ each time, thus we estimate for $t-s>1$
\begin{equation*}\begin{split}
&(t-s)^{-n - \frac{k}{2}}\int_0^\infty \int_{\R^{k}}
e^{i\lambdabar\big(1+\frac{\Phi(z,z',v)}{t-s}\big)} L^N\Big(\lambdabar^{n-1+k/2}
\varphi(\frac{\lambdabar}{t-s})a(\frac{\lambdabar}{t-s},y,y',\sigma,v) \phi_m(\lambdabar) \Big)\,
dv \, d\lambdabar \\ &\lesssim (t-s)^{-n} 2^{-m(N-n - k/2)}
\end{split} \label{intt}
\end{equation*}
Hence we prove \eqref{k=0} by summing over
$m\geq0$, thus \eqref{UiUj2} follows.

\end{proof}

\subsection{Abstract Stirchartz estimate on Lorentz space}

To prove the Strichartz estimate, we sharpen the semiclassical version of Strichartz
estimates \cite[Proposition 4.1]{Z2} to Lorentz space $L^{r,2}$ by following abstract Keel-Tao's Strichartz estimates
theorem.

\begin{proposition}\label{prop:semi}
Let $(X,\mathcal{M},\mu)$ be a $\sigma$-finite measured space and
$U: \mathbb{R}\rightarrow B(L^2(X,\mathcal{M},\mu))$ be a weakly
measurable map satisfying, for some constants $C$, $\alpha\geq0$,
$\sigma, h>0$,
\begin{equation}\label{md}
\begin{split}
\|U(t)\|_{L^2\rightarrow L^2}&\leq C,\quad t\in \mathbb{R},\\
\|U(t)U(s)^*f\|_{L^\infty}&\leq
Ch^{-\alpha}(h+|t-s|)^{-\sigma}\|f\|_{L^1}.
\end{split}
\end{equation}
Then for every pair $q,\rr\in[2,\infty]$ such that $(q,\rr,\sigma)\neq
(2,\infty,1)$ and
\begin{equation*}
\frac{1}{q}+\frac{\sigma}{\rr}\leq\frac\sigma 2,\quad q\ge2,
\end{equation*}
there exists a constant $\tilde{C}$ only depending on $C$, $\sigma$,
$q$ and $r$ such that
\begin{equation}\label{s-stri}
\Big(\int_{\R}\|U(t) u_0\|_{L^{\rr,2}}^q dt\Big)^{\frac1q}\leq \tilde{C}
\Lambda(h)\|u_0\|_{L^2}
\end{equation}
where $\Lambda(h)=h^{-(\alpha+\sigma)(\frac12-\frac1\rr)+\frac1q}$.
\end{proposition}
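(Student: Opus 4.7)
The plan is to reduce the statement to the sharp abstract Strichartz theorem of Keel and Tao \cite{KT} by a time rescaling that absorbs the semiclassical parameter $h$. Define $\tilde U(\tau):=U(h\tau)$; then the two hypotheses (\ref{md}) take the form $\|\tilde U(\tau)\|_{L^2\to L^2}\leq C$ and $\|\tilde U(\tau)\tilde U(\tau')^*f\|_{L^\infty}\leq Ch^{-\alpha-\sigma}(1+|\tau-\tau'|)^{-\sigma}\|f\|_{L^1}\leq \widetilde C\,|\tau-\tau'|^{-\sigma}\|f\|_{L^1}$ with $\widetilde C:=Ch^{-\alpha-\sigma}$, which are precisely the hypotheses of Keel-Tao's abstract Strichartz theorem with dispersive constant $\widetilde C$.

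Applying \cite[Theorem 1.2]{KT} then yields, for every admissible pair $(q,r)$ with $q\geq 2$, $(q,r,\sigma)\neq(2,\infty,1)$ and $1/q+\sigma/r\leq\sigma/2$, the Lorentz-refined bound $\|\tilde U(\tau)u_0\|_{L^q_\tau L^{r,2}_z}\lesssim \widetilde C^{\,1/2-1/r}\|u_0\|_{L^2}$. Undoing the rescaling through $t=h\tau$ contributes a factor $h^{1/q}$ to the time-norm, and $\widetilde C^{\,1/2-1/r}=C^{1/2-1/r}\,h^{-(\alpha+\sigma)(1/2-1/r)}$, so the two factors combine to give exactly $\Lambda(h)=h^{-(\alpha+\sigma)(1/2-1/r)+1/q}$. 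Internally, the Keel-Tao argument has two layers: for non-endpoint pairs, Riesz-Thorin interpolation between the $L^2$-boundedness and the dispersive bound produces the mixed-norm kernel estimate on $U(t)U(s)^*$, and then $TT^*$ combined with Hardy-Littlewood-Sobolev in the time variable closes the estimate; for the endpoint $q=2$, $r=2\sigma/(\sigma-1)$ (permitted only when $\sigma>1$), Hardy-Littlewood-Sobolev just barely fails and one must deploy the delicate dyadic bilinear argument, decomposing the time-difference into dyadic shells $|\tau-\tau'|\sim 2^j$ and summing via an atomic decomposition in the time variable, which is precisely where the Lorentz norm $L^{r,2}_z$ emerges in place of $L^r_z$.

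The main obstacle is the endpoint case $q=2$: it is the only place where the argument is not a routine consequence of interpolation, and it is responsible both for the exclusion $(q,r,\sigma)\neq(2,\infty,1)$ and for the appearance of $L^{r,2}_z$ in the conclusion. The non-endpoint Lorentz refinement is comparatively elementary, following either by real interpolation between two nearby Lebesgue-admissible pairs and the identification in Proposition \ref{Lorentz'}, or by interpolating the endpoint estimate against the trivial energy bound $\|U(\cdot)u_0\|_{L^\infty_tL^2_z}\leq C\|u_0\|_{L^2}$, which is an immediate consequence of the first hypothesis in (\ref{md}).
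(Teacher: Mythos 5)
Your rescaling reduction is sound, and correctly bookkept, for the pairs \emph{on} the sharp line $\tfrac1q+\tfrac\sigma\rr=\tfrac\sigma2$: the factor $h^{1/q}$ from the change of variables times $\widetilde C^{\,1/2-1/\rr}=C^{1/2-1/\rr}h^{-(\alpha+\sigma)(1/2-1/\rr)}$ indeed gives $\Lambda(h)$, and this is essentially the paper's treatment of that case (the paper does not rescale but simply bounds $(h+|t-s|)^{-\sigma}\le |t-s|^{-\sigma}$ and invokes Keel--Tao, remarking that the interpolation argument of \cite[Section 6]{KT} yields the Lorentz refinement $L^{\rr,2}$). The genuine gap is the non-sharp pairs $\tfrac1q+\tfrac\sigma\rr<\tfrac\sigma2$, which the proposition explicitly includes and which are actually used later (e.g. $(q,\rr)=\big(\tfrac{n+2}{n-2},\tfrac{2(n+2)}{n-2}\big)$ in Section 7 lies strictly below the line). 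Keel--Tao's Theorem 1.2 is stated only for \emph{sharp} $\sigma$-admissible pairs, i.e. with equality; once you discard the truncation by writing $(1+|\tau-\tau'|)^{-\sigma}\le|\tau-\tau'|^{-\sigma}$, the statement you would need --- global-in-time Strichartz for strictly sub-admissible pairs from the untruncated decay alone --- is false in general (for the free Schr\"odinger evolution, scaling forces equality), so it cannot be a theorem you quote. Nor can it be recovered afterwards by interpolating against the energy bound $L^\infty_tL^2_z$: that interpolation changes the spatial exponent along with $q$, whereas a non-sharp pair has the same $\rr$ as a sharp pair but a strictly larger $q$ and, consistently, a strictly worse power of $h$ in $\Lambda(h)$. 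Your closing paragraph only addresses the Lorentz refinement at non-endpoint sharp pairs, not this case.

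The truncation $(h+|t-s|)^{-\sigma}$ is precisely the extra information that makes the sub-admissible estimates true, and the paper uses it directly: for $\tfrac1q+\tfrac\sigma\rr<\tfrac\sigma2$ it runs a $TT^*$ bilinear argument, interpolating the two hypotheses (as in \cite[Lemma 6.1]{KT}) to get
\begin{equation*}
|\langle U(s)^*f(s),U(t)^*g(t)\rangle|\lesssim h^{-\alpha(1-\frac2\rr)}(h+|t-s|)^{-\sigma(1-\frac2\rr)}\|f(s)\|_{L^{\rr',2}}\|g(t)\|_{L^{\rr',2}},
\end{equation*}
and then applying H\"older and Young in time, where the strict inequality guarantees $\|(h+|t|)^{-\sigma(1-2/\rr)}\|_{L^{q/2}_t}\sim h^{-\sigma(1-\frac2\rr)+\frac2q}$, producing exactly $\Lambda(h)^2$ for the bilinear form. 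You can carry out the same computation in your rescaled variables with the kernel $(1+|\tau-\tau'|)^{-\sigma(1-2/\rr)}$, but it must be done; it is not contained in Keel--Tao's theorem, and your proposal as written omits it.
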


\begin{proof} For convenience, we write down the proof by repeating the argument in \cite{Z2} but with minor modification of the interpolation. If
$(q,\rr,\sigma)\neq (2,\infty,1)$ is on the line $\frac1q+\frac\sigma
\rr=\frac\sigma 2$, we replace $(|t-s|+h)^{-\sigma}$ by
$|t-s|^{-\sigma}$ and then we closely follow Keel-Tao's argument
\cite[Sections 3-7]{KT} to show \eqref{s-stri}. We remark here that the alternative interpolation argument in \cite [Section 6]{KT} shows the inequalities sharpened to Lorentz space. So we only consider
$\frac1q+\frac\sigma \rr<\frac\sigma 2$. By the $TT^*$ argument, it
suffices to show
\begin{equation*}
\begin{split}
\Big|\iint\langle U(s)^*f(s), U(t)^*g(t) \rangle dsdt\Big|\lesssim
\Lambda(h)^2\|f\|_{L^{q'}_tL^{\rr',2}}\|g\|_{L^{q'}_tL^{\rr',2}}.
\end{split}
\end{equation*}
Using the bilinear interpolation of \eqref{md} in \cite[Lemma 6.1]{KT}, we have
\begin{equation*}
\begin{split}
\langle U(s)^*f(s), U(t)^*g(t) \rangle&\leq
Ch^{-\alpha(1-\frac2\rr)}(h+|t-s|)^{-\sigma(1-\frac2\rr)}\|f\|_{L^{\rr',2}}\|g\|_{L^{\rr',2}}.
\end{split}
\end{equation*}
Therefore, we see by H\"older's and Young's inequalities for
$\frac1q+\frac\sigma \rr<\frac\sigma 2$
\begin{equation*}
\begin{split}
\Big|\iint\langle U(s)^*f(s),& U(t)^*g(t) \rangle
dsdt\Big|\\&\lesssim
h^{-\alpha(1-\frac2\rr)}\iint(h+|t-s|)^{-\sigma(1-\frac2\rr)}\|f(t)\|_{L^{\rr',2}}\|g(s)\|_{L^{\rr',2}}dtds\\&
\lesssim
h^{-\alpha(1-\frac2\rr)}h^{-\sigma(1-\frac2\rr)+\frac2q}\|f\|_{L^{q'}_tL^{\rr',2}}\|g\|_{L^{q'}_tL^{\rr',2}}.
\end{split}
\end{equation*}
This proves \eqref{s-stri}.
\end{proof}

\subsection{Homogeneous Strichartz estimates} Now we show the homogeneous Strichartz estimate.
Let $u$ solve \begin{equation}\label{leq}
\partial_{t}^2u+\LL_0 u=0, \quad u(0)=u_0,
~\partial_tu(0)=u_1,
\end{equation}
then for $q,\rr\geq2$, the square function
estimates \eqref{squareL} and Minkowski's inequality show that
\begin{equation}\label{LP}
\|u\|_{L^q(\R;L^{\rr,2}(X))}\lesssim
\Big(\sum_{k\in\Z}\|u_k\|^2_{L^q(\R;L^{\rr,2}(X))}\Big)^{\frac12}
\end{equation}
where $u_k$ is defined by
\begin{equation}\label{loc}
u_k(t,\cdot)=\varphi(2^{-k}\sqrt{\LL_0})u(t,\cdot),
\end{equation}
where $\varphi$ is as in \eqref{dp}.
Applying the operator $\varphi(2^{-k}\sqrt{\LL_0})$ to the wave equation, we obtain
\begin{equation}\label{leq}
\partial_{t}^2u_k+\LL_0 u_k=0, \quad u_k(0)=f_k(z),
~\partial_tu_k(0)=g_k(z),
\end{equation}
where $f_k=\varphi(2^{-k}\sqrt{\LL_0})u_0$ and
$g_k=\varphi(2^{-k}\sqrt{\LL_0})u_1$. Let $U(t)=e^{it\sqrt{\LL_0}}$, then
we write
\begin{equation}\label{sleq}
\begin{split}
u_k(t,z)
=\frac{U(t)+U(-t)}2f_k+\frac{U(t)-U(-t)}{2i\sqrt{\LL_0}}g_k.
\end{split}
\end{equation}
For our purpose, we need the following
\begin{proposition}\label{lStrichartz} Let
$f=\varphi(2^{-k}\sqrt{\LL_0})f$ for $k\in\Z$ and $U(t)=e^{it\sqrt{\LL_0}}$, we have
\begin{equation}\label{lstri}
\|U(t)f\|_{L^q_tL^{\rr,2}_z(\mathbb{R}\times X)}\lesssim
2^{ks}\|f\|_{L^2(X)},
\end{equation}
where the admissible pair $(q,\rr)\in [2,\infty]^2$ satisfies
\eqref{adm} and $s=n(\frac12-\frac1\rr)-\frac1q$.
\end{proposition}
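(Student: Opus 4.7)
\textbf{Proof plan for Proposition \ref{lStrichartz}.}
The plan is to microlocalize the frequency-localized half-wave propagator via the phase-space partition of unity from Proposition~\ref{prop:localized spectral measure}, apply the abstract semiclassical Strichartz estimate of Proposition~\ref{prop:semi} to each microlocalized piece with semiclassical parameter $h=2^{-k}$, and then sum the finitely many pieces. Concretely, first I would write
\[
U(t)f \;=\; \sum_{j=0}^{N} U_{j,k}(t)\,f,
\]
using the decomposition \eqref{Uitj} that comes from $\mathrm{Id}=\sum_{j} Q_j(\lambda)$. Since $N$ is independent of $k$ and $\rr<\infty$, it suffices to establish the claimed bound for each $U_{j,k}(t)$ separately and then sum in $j$.

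Next I would feed each $U_{j,k}(t)$ into Proposition~\ref{prop:semi} with $h=2^{-k}$, $\sigma=(n-1)/2$, and $\alpha=(n+1)/2$. The two required hypotheses are verified from Proposition~\ref{prop:Dispersive}: the uniform $L^2\to L^2$ bound \eqref{energy} provides hypothesis one, and for the dispersive hypothesis I use the diagonal kernel $Q_j\,dE_{\sqrt{\LL_0}}\,Q_j^*$, for which the unrestricted estimate \eqref{UiUjnear} applies since $(j,j)$ falls into $J_{near}$ (or involves $Q_0$, which is handled by the same bound). Thus
\[
\|U_{j,k}(t)U_{j,k}(s)^\ast\|_{L^1\to L^\infty} \;\lesssim\; 2^{k(n+1)/2}\bigl(2^{-k}+|t-s|\bigr)^{-(n-1)/2},
\]
which is exactly $h^{-\alpha}(h+|t-s|)^{-\sigma}$ with the stated parameters. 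The non-outgoing/non-incoming dispersive estimates \eqref{UiUj}--\eqref{UiUj2} are not needed at this stage; they are reserved for the double-endpoint inhomogeneous bound of Section~5.

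Finally I would match the admissibility and scaling: the hypothesis $\frac{1}{q}+\frac{\sigma}{\rr}\leq\frac{\sigma}{2}$ with the exclusion $(q,\rr,\sigma)\neq(2,\infty,1)$ coincides with the wave admissibility \eqref{adm} and the exclusion $(q,\rr,n)\neq(2,\infty,3)$, while Proposition~\ref{prop:semi} produces the factor
\[
\Lambda(h)\;=\;h^{-(\alpha+\sigma)(\tfrac12-\tfrac1\rr)+\tfrac1q}\;=\;2^{k\bigl[n(\tfrac12-\tfrac1\rr)-\tfrac1q\bigr]}\;=\;2^{ks},
\]
which is exactly the claimed loss. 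Summing over $j=0,\dots,N$ completes the proof. No deep new obstacle arises here since the heavy lifting was done in Propositions~\ref{prop:localized spectral measure}, \ref{prop:Dispersive}, and \ref{prop:semi}; the one checkpoint to verify carefully is that on the diagonal $j=j'$ the clean two-sided dispersive estimate \eqref{UiUjnear} is indeed available (so we need not split into $t<s$ and $s<t$ cases using \eqref{UiUj}--\eqref{UiUj2}), and that the Lorentz refinement to $L^{\rr,2}$ is propagated from Proposition~\ref{prop:semi} whose proof sharpened the Keel--Tao interpolation to the Lorentz scale. The subsequent Littlewood--Paley reassembly on $L^{\rr,2}$ via Proposition~\ref{prop:square} and Minkowski's inequality then upgrades this frequency-localized bound to the full Strichartz estimate stated in Theorem~\ref{thm:Strichartz}(i).
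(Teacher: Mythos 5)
Your proposal is correct and follows essentially the same route as the paper: microlocalize via $\mathrm{Id}=\sum_j Q_j(\lambda)$, apply the Lorentz-refined abstract estimate of Proposition~\ref{prop:semi} with $h=2^{-k}$, $\alpha=(n+1)/2$, $\sigma=(n-1)/2$ using the diagonal bounds \eqref{energy} and \eqref{UiUjnear} from Proposition~\ref{prop:Dispersive}, and sum the finitely many $j$ (the paper only differs cosmetically, inserting an auxiliary $\widetilde\varphi$ to reduce the sum over $k'$ to $|k-k'|\lesssim 1$ rather than writing $U(t)f=\sum_j U_{j,k}(t)f$ directly). Your checkpoint about needing only the diagonal $(j,j)$ dispersive estimate, with \eqref{UiUj}--\eqref{UiUj2} reserved for the double-endpoint inhomogeneous bound, matches the paper's usage.
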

\begin{proof}Let $\alpha=(n+1)/2$, $\sigma=(n-1)/2$ and
$h=2^{-k}$, by Proposition \ref{prop:Dispersive}, we have the estimates \eqref{md}
for $U_{j,k}(t)$. Then it follows from Proposition \ref{prop:semi} that
\begin{equation*}
\|U_{j,k}(t)f\|_{L^q_t(\R:L^{\rr,2}(X))}\lesssim
2^{k[n(\frac12-\frac1\rr)-\frac1q]} \|f\|_{L^2(X)}.
\end{equation*}
On the other hand, we have
\begin{equation*}
U(t)=\sum_{j=0}^{N}\sum_{k\in\Z}U_{j,k}(t).
\end{equation*}
Let $\widetilde{\varphi} \in C_0^\infty(\R\setminus\{0\})$ take
values in $[0,1]$ such that $\widetilde{\varphi}\varphi=\varphi$, hence we can write
\begin{equation*}
U(t)f=\sum_{j}\sum_{k\in\mathbb{Z}}\int_0^\infty
e^{it\lambda}\varphi(2^{-k}\lambda)Q_j(\lambda)dE_{\sqrt{\LL_0}}(\lambda)
\widetilde{\varphi}(2^{-k}\sqrt{\LL_0})f.
\end{equation*}
Notice $f=\varphi(2^{-k}\sqrt{\LL_0})f$, then
$\widetilde{\varphi}(2^{-k'}\sqrt{\LL_0})f$ vanishes if
$|k-k'|\gg1$. Hence we obtain
\begin{equation*}
\|U(t)f\|_{L^q_t(\R:L^{\rr,2}(X))}\lesssim
2^{k[n(\frac12-\frac1\rr)-\frac1q]} \|f\|_{L^2(X)}.
\end{equation*}
Therefore, we prove this proposition.
\end{proof}
By \eqref{LP} and \eqref{sleq} and \eqref{lstri}, we have that
\begin{equation*}
\begin{split}
&\|u\|_{L^q(\R;L^{\rr,2}(X))}\\&\lesssim
\Big(\sum_{k\in\Z}\big(2^{2ks}\|\varphi(2^{-k}\sqrt{\LL_0})u_0\|^2_{L^2(X)}+2^{2k(s-1)}\|\varphi(2^{-k}\sqrt{\LL_0})u_1\|^2_{L^2(X)}\big)\Big)^{\frac12}.
\end{split}
\end{equation*}
By Littlewood-Paley theory again \eqref{squareL}, we prove
\begin{equation*}
\|u\|_{L^q(\R;L^{\rr,2}(X))}\lesssim
\|u_0\|_{\dot H^s(X)}+\|u_1\|_{\dot H^{s-1}(X)}.
\end{equation*}

\subsection{Inhomogeneous Strichartz estimates}\label{4.5}
In this subsection, we derive the inhomogeneous Strichartz estimate from the homogeneous Strichartz estimate by using Christ-Kiselev lemma \cite{CK}.  Recall the half-wave operator
$U(t)=e^{it\sqrt{\LL_0}}: L^2\rightarrow L^2$ and in last subsection we have just
proved that
\begin{equation}
\|U(t)u_0\|_{L^q_tL^{\rr,2}_z}\lesssim\|u_0\|_{\dot{H}^s}
\end{equation} holds for all $(q,\rr,s)$ satisfying \eqref{adm} and \eqref{scaling}.
Given $s\in\R$ and $(q,r)\in\Lambda_s$, define the operator ${\bf T}_s$ by
\begin{equation}\label{Ts}
\begin{split}
{\bf T}_s: L^2_z&\rightarrow L^q_tL^{\rr,2}_z,\quad f\mapsto \LL_0^{-\frac
s2}e^{it\sqrt{\LL_0}}f.
\end{split}
\end{equation}
By the dual of Lorentz space in Proposition \ref{prop:dual},  we have
\begin{equation}\label{Ts*}
\begin{split}
{\bf T}^*_{1-s}: L^{\tilde{q}'}_tL^{\tilde{\rr}',2}_z\rightarrow L^2,\quad
F(\tau,z)&\mapsto \int_{\R}\LL_0^{\frac
{s-1}2}e^{-i\tau\sqrt{\LL_0}}F(\tau)d\tau,
\end{split}
\end{equation}
where $1-s=n(\frac12-\frac1{\tilde{\rr}})-\frac1{\tilde{q}}$.
It shows that
\begin{equation*}
\Big\|\int_{\R}U(t)U^*(\tau)\LL_0^{-\frac12}F(\tau)d\tau\Big\|_{L^q_tL^{\rr,2}_z}
=\big\|{\bf T}_s{\bf T}^*_{1-s}F\big\|_{L^q_tL^{\rr,2}_z}\lesssim\|F\|_{L^{\tilde{q}'}_tL^{\tilde{\rr}',2}_z}.
\end{equation*}
Note that $s=n(\frac12-\frac1\rr)-\frac1q$ and
$1-s=n(\frac12-\frac1{\tilde{\rr}})-\frac1{\tilde{q}}$, thus $(q,\rr),
(\tilde{q},\tilde{\rr})$ satisfy \eqref{scaling}. By the
Christ-Kiselev lemma \cite{CK}, we thus obtain for $q>\tilde{q}'$,
\begin{equation}\label{non-inhomgeneous}
\begin{split}
\Big\|\int_{\tau<t}\frac{\sin{(t-\tau)\sqrt{\LL_0}}}
{\sqrt{\LL_0}}F(\tau)d\tau\Big\|_{L^q_tL^{\rr,2}_z}\lesssim\|F\|_{L^{\tilde{q}'}_t{L}^{\tilde{\rr}',2}_z}.
\end{split}
\end{equation}
Notice that for all $(q,\rr), (\tilde{q},\tilde{\rr})\in\Lambda_s$, one must have $q>\tilde{q}'$.\vspace{0.2cm}

Therefore, we conclude that:
\begin{proposition}\label{Str-L0} For any $s\in\R$, let $(q,\rr), (\tilde{q},\tilde{\rr})\in \Lambda_s$ and let $u$ be the solution to
\begin{equation}\label{leq}
\partial_{t}^2u+\LL_0 u=F, \quad u(0)=u_0,
~\partial_tu(0)=u_1,
\end{equation}
the following Strichartz estimates hold:
\begin{equation}\label{Str-L0-est}
\|u(t,z)\|_{L^q(\R;L^{\rr,2}(X))}\leq C\left(\|u_0\|_{\dot H^s(X)}+\|u_1\|_{\dot H^{s-1}(X)}+\|F\|_{L^{\tilde{q}'}(\R;L^{\tilde{\rr}',2}(X))}\right).
\end{equation}

\end{proposition}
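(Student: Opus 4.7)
The plan is to synthesize the frequency-localized homogeneous Strichartz bound (Proposition \ref{lStrichartz}) with the $TT^*$/Christ--Kiselev machinery developed in Subsection \ref{4.5}, assembling the pieces via Littlewood--Paley decomposition on Lorentz spaces.

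First I would split the solution $u$ via Duhamel's formula into the three terms
$$\cos(t\sqrt{\LL_0})\, u_0, \qquad \frac{\sin(t\sqrt{\LL_0})}{\sqrt{\LL_0}}\, u_1, \qquad \int_0^t \frac{\sin((t-\tau)\sqrt{\LL_0})}{\sqrt{\LL_0}}\, F(\tau)\, d\tau,$$
and rewrite them in terms of the half-wave propagators $U(\pm t)=e^{\pm it\sqrt{\LL_0}}$ exactly as in \eqref{sleq}. For the two data-driven terms I dyadically decompose via the spectral multiplier $\varphi(2^{-k}\sqrt{\LL_0})$ using the partition \eqref{dp}, apply Proposition \ref{lStrichartz} at each dyadic shell (gaining $2^{ks}$ for the $u_0$-piece and $2^{k(s-1)}$ for the $u_1$-piece after accounting for $\LL_0^{-1/2}$), and reassemble by means of the Lorentz-space Littlewood--Paley inequality \eqref{squareL} together with Minkowski's inequality in $\ell^2 \hookrightarrow L^q$ valid for $q\geq 2$. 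This recovers the data contribution $\|u_0\|_{\dot H^s} + \|u_1\|_{\dot H^{s-1}}$ on the right-hand side of \eqref{Str-L0-est}.

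For the inhomogeneous term I would follow the $TT^*$ route of Subsection \ref{4.5}. With $\mathbf{T}_s$ and $\mathbf{T}^*_{1-s}$ defined by \eqref{Ts} and \eqref{Ts*} (using the Lorentz duality from Proposition \ref{prop:dual}), the composition
$$\mathbf{T}_s \mathbf{T}^*_{1-s}F(t) = \int_{\R} U(t)U(\tau)^* \LL_0^{-1/2} F(\tau)\, d\tau$$
is bounded from $L^{\tilde q'}_t L^{\tilde r',2}_z$ to $L^q_t L^{r,2}_z$ because each factor is; the scaling identities $s=n(\tfrac12-\tfrac1r)-\tfrac1q$ and $1-s=n(\tfrac12-\tfrac1{\tilde r})-\tfrac1{\tilde q}$ that make the composition dimensionally consistent are precisely the conditions built into the definition of $\Lambda_s$. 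The retarded integral $\int_{\tau<t}$ appearing in Duhamel is then extracted from this untruncated bilinear estimate by the Christ--Kiselev lemma \cite{CK}, whose hypothesis $q > \tilde q'$ is verified for all non--double-endpoint pairs in $\Lambda_s$ (the genuine double endpoint $q=\tilde q'=2$ is deferred to Section 5).

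The principal bookkeeping obstacle is to carry the Lorentz-space refinement through every step: Proposition \ref{prop:semi} must be invoked in its Lorentz form so that the frequency-localized estimate lives in $L^q_t L^{r,2}_z$ rather than $L^q_t L^{r}_z$; the Littlewood--Paley square function must be used in the Lorentz version \eqref{squareL} (rather than the Lebesgue version \eqref{square}) so that the dyadic pieces reassemble without degradation of the second Lorentz exponent; and the duality in the $TT^*$ argument must appeal to Proposition \ref{prop:dual} so that the forcing term naturally sits in $L^{\tilde q'}_t L^{\tilde r',2}_z$. Once these alignments are checked, the conclusion \eqref{Str-L0-est} is a direct combination of the homogeneous and inhomogeneous pieces just described.
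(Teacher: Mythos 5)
Your proposal is correct and follows essentially the same route as the paper: the homogeneous part via the Lorentz Littlewood--Paley inequality \eqref{squareL}, Minkowski, and the frequency-localized estimate of Proposition \ref{lStrichartz}, and the inhomogeneous part via the factorization $\mathbf{T}_s\mathbf{T}^*_{1-s}$ with Lorentz duality and the Christ--Kiselev lemma, using that $q>\tilde q'$ holds automatically for pairs in $\Lambda_s$ (so the true double endpoint never arises here and is indeed handled separately in Section 5).
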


\begin{remark} This result concludes the full range set of global-in-time Strichartz estimates both in homogenous and inhomogeneous inequalities when $V=0$.
Hence, by embedding inequality of Lorentz space, we prove Theorem \ref{thm:Strichartz} when $V=0$.
\end{remark}

\section{Inhomogeneous Strichartz estimates with $q=\tilde{q}=2$} In the next section, we need the following result on the double endpoint  inhomogeneous Strichartz estimate.
\begin{proposition}\label{prop:inh} Let $\rr={2(n-1)}/{(n-3)}$ and $F=\varphi(2^{-k}\sqrt{\LL_0})F$, we have the following inequality
\begin{equation}\label{inh}
\begin{split}
\Big\|\int_{\tau<t}\frac{\sin{(t-\tau)\sqrt{\LL_0}}}
{\sqrt{\LL_0}}F(\tau)d\tau\Big\|_{L^2_tL^{\rr,2}_z}\lesssim 2^{k[2n(\frac12-\frac1\rr)-2]}\|F\|_{L^{2}_t{L}^{\rr',2}_z}.
\end{split}
\end{equation}
\end{proposition}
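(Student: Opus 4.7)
The strategy is to run a microlocalized Keel--Tao bilinear argument at the double endpoint $q=\tilde q=2$, where the Christ--Kiselev lemma used in the previous subsection is no longer available. Consequently the retarded restriction $\tau<t$ must be handled \emph{inside} the bilinear form, which is made possible by the time-direction--sensitive dispersive estimates \eqref{UiUj} and \eqref{UiUj2}.

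First, since $F=\varphi(2^{-k}\sqrt{\LL_0})F$ is frequency localized at $\lambda\sim 2^k$, the operator $\sqrt{\LL_0}^{-1}$ contributes a factor $2^{-k}$ in norm; combined with the gain $2^{2ks}$, where $s=(n+1)/(2(n-1))$ is the regularity index of the endpoint homogeneous estimate, the $TT^*$ formulation reproduces exactly the factor $2^{k[2n(1/2-1/r)-2]}=2^{2k/(n-1)}$ in \eqref{inh}. Writing
\[
\frac{\sin((t-\tau)\sqrt{\LL_0})}{\sqrt{\LL_0}}=\frac{e^{i(t-\tau)\sqrt{\LL_0}}-e^{-i(t-\tau)\sqrt{\LL_0}}}{2i\sqrt{\LL_0}}
\]
and expanding the half-wave operators via the microlocal partition $e^{it\sqrt{\LL_0}}=\sum_{j=0}^N U_{j,k}(t)$ from \eqref{Uitj}, it suffices to establish, for each pair $(j,j')$,
\[
\Bigl|\iint_{\tau<t}\langle U_{j,k}(t)U_{j',k}^*(\tau)F(\tau),G(t)\rangle\,dt\,d\tau\Bigr|\lesssim 2^{k[2n(1/2-1/r)-1]}\|F\|_{L^2_tL^{r',2}_z}\|G\|_{L^2_tL^{r',2}_z}.
\]

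For $(j,j')\in J_{near}$ or involving the index $0$, the two-sided dispersive estimate \eqref{UiUjnear} and the $L^2$-boundedness \eqref{energy} feed the hypotheses of the abstract endpoint Keel--Tao scheme (the framework of Proposition \ref{prop:semi} refined to the Lorentz target $L^{r,2}$): one dyadically decomposes $|t-\tau|\sim 2^m$, applies bilinear real interpolation between the $L^1\to L^\infty$ dispersive bound and the $L^2$ energy bound, and sums in $m\in\Z$ using that the endpoint $(2,2(n-1)/(n-3))$ lies on the critical line $2/q+(n-1)/r=(n-1)/2$. The passage to Lorentz norms uses the real-interpolation identification in Proposition \ref{Lorentz'}, exactly as in the proof of Proposition \ref{prop:semi}. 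For $(j,j')\in J_{non-inc}$ not already covered, the one-sided dispersive estimate \eqref{UiUj2} holds precisely in the range $\tau<t$ of our integral, so the same dyadic bilinear argument applies verbatim, without any Christ--Kiselev step. For the residual pairs lying only in $J_{non-out}$, the dispersive bound \eqref{UiUj} is valid only in the opposite time ordering; however, taking adjoints in the bilinear form (which interchanges $(t,j,G)\leftrightarrow(\tau,j',F)$) converts such a pair into one where $Q_{j'}$ is not incoming-related to $Q_j$ for the reversed variables, thereby reducing it to the previous case.

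The main obstacle is executing the Keel--Tao endpoint bilinear interpolation on the Lorentz target $L^{r,2}$ while respecting the truncation $\tau<t$: both the dyadic summation in $|t-\tau|$ and the real-interpolation upgrade have to be carried out on one-sided time intervals, and every pair in the microlocal partition must be matched with a dispersive estimate compatible with that one-sided range. This matching is exactly what the sign analysis in Lemma \ref{lem:sign} furnishes via the trichotomy $\{J_{near},J_{non-inc},J_{non-out}\}$, together with the duality reduction from $J_{non-out}$ to $J_{non-inc}$ described above.
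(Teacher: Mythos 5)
Your reduction to a per-pair bilinear estimate for the microlocalized operators $U_{j,k}(t)U_{j',k}^*(\tau)$, and your treatment of the pairs in $J_{near}$ (or involving the index $0$) and of the pairs where the dispersive bound is available on $\{\tau<t\}$, coincide with the paper's Lemma \ref{either}. The gap is in your treatment of the residual pairs for which the one-sided dispersive estimate holds only in the opposite ordering $t<\tau$: the ``taking adjoints'' reduction is circular. Taking adjoints in the pairing replaces $U_{j,k}(t)U_{j',k}^*(\tau)$ by $U_{j',k}(\tau)U_{j,k}^*(t)$ \emph{on the same region} $\{\tau<t\}$, and an operator and its adjoint have identical $L^1\to L^\infty$ norms, so no new dispersive information appears. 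Equivalently, ``$Q_j$ is not outgoing-related to $Q_{j'}$'' transposes into ``$Q_{j'}$ is not incoming-related to $Q_j$'', and the corresponding estimate \eqref{UiUj2} for the transposed pair again bounds the kernel only when its second time argument precedes its first, i.e.\ still only on $\{t<\tau\}$ for your integral. Thus for a pair lying only in $J_{non-out}$ you still have no $L^1\to L^\infty$ bound on the retarded region, and the dyadic Keel--Tao bilinear scheme cannot be run there; the duality swap $F\leftrightarrow G$ merely converts the retarded form for $(j,j')$ into the advanced form for $(j',j)$, whose dispersive input is exactly the one you lack.

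The missing idea, which is how the paper closes this case, is the subtraction trick: for every pair one also has the \emph{untruncated} bilinear estimate \eqref{R2} over all of $\R^2_{t,\tau}$, obtained by $TT^*$/duality from the already-established homogeneous endpoint Strichartz bounds for $U_{j,k}$ in Lorentz spaces (no time ordering, hence no dispersive estimate on $\{\tau<t\}$ is needed). For a pair in $J_{non-out}$ one proves the advanced estimate \eqref{bilinear:s>t} on $\{\tau>t\}$, where \eqref{UiUj} does apply, and then uses $\iint_{\tau<t}=\iint_{\R^2}-\iint_{\tau>t}$ to deduce the retarded bound \eqref{bilinear:s<t}. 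Without this step (or a substitute for it) the $J_{non-out}$ contributions to \eqref{TFG} are not controlled, so your proof is incomplete; the rest of your outline (the frequency-localization bookkeeping producing the factor $2^{k[2n(\frac12-\frac1\rr)-2]}$ after inserting $\LL_0^{-1/2}$, and the Lorentz refinement via real interpolation as in Proposition \ref{prop:semi}) matches the paper.
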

As a consequence, we have
\begin{corollary}\label{cor:inh} Let $\rr={2(n-1)}/{(n-3)}$, the following inequality holds
\begin{equation}\label{inh}
\begin{split}
\Big\|\LL_0^{-\frac1{n-1}}\int_{\tau<t}\frac{\sin{(t-\tau)\sqrt{\LL_0}}}
{\sqrt{\LL_0}}F(\tau)d\tau\Big\|_{L^2_tL^{\rr,2}_z}\lesssim \|F\|_{L^{2}_t{L}^{\rr',2}_z}.
\end{split}
\end{equation}
\end{corollary}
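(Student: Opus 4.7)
The plan is to reduce to Proposition \ref{prop:inh} via a Littlewood--Paley decomposition, using the exact cancellation between the frequency-dependent loss there and the smoothing supplied by $\LL_0^{-1/(n-1)}$. Specifically, when $\rr=2(n-1)/(n-3)$ one computes $\tfrac12-\tfrac1{\rr}=\tfrac{1}{n-1}$, hence $2n(\tfrac12-\tfrac1{\rr})-2=\tfrac{2n}{n-1}-2=\tfrac{2}{n-1}$, so the loss $2^{2k/(n-1)}$ in Proposition \ref{prop:inh} is exactly balanced by the factor $2^{-2k/(n-1)}$ contributed by $\LL_0^{-1/(n-1)}$ at frequency $2^k$.

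First I would decompose $F=\sum_{k\in\Z}F_k$ via the partition \eqref{dp}, setting $F_k=\varphi(2^{-k}\sqrt{\LL_0})F$, and write $T(t,\tau)=\sin((t-\tau)\sqrt{\LL_0})/\sqrt{\LL_0}$. Since $\LL_0^{-1/(n-1)}$ commutes with $T(t,\tau)$ and with $\varphi(2^{-k}\sqrt{\LL_0})$, the $k$-th output piece is frequency localised at $2^k$ and one can write
\begin{equation*}
\LL_0^{-\tfrac{1}{n-1}}\!\!\int_{\tau<t}\! T(t,\tau)F_k\,d\tau = \int_{\tau<t}\! T(t,\tau)\bigl(2^{-2k/(n-1)}\mu(2^{-k}\sqrt{\LL_0})\tilde F_k\bigr)d\tau,
\end{equation*}
where $\mu(\lambda)=\lambda^{-2/(n-1)}\varphi(\lambda)$ is a smooth bump supported in $[1/2,2]$, $\tilde\varphi$ is a slightly fattened version of $\varphi$ with $\tilde\varphi\mu=\mu$, and $\tilde F_k=\tilde\varphi(2^{-k}\sqrt{\LL_0})F$. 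By the Mikhlin--H\"ormander multiplier theorem (Lemma \ref{lemma:MH}), upgraded to $L^{\rr',2}$ through the real interpolation description in Proposition \ref{Lorentz'}, one has $\|\mu(2^{-k}\sqrt{\LL_0})\tilde F_k\|_{L^{\rr',2}_z}\lesssim\|\tilde F_k\|_{L^{\rr',2}_z}$ uniformly in $k$.

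Applying Proposition \ref{prop:inh} to the frequency-localised function $\LL_0^{-1/(n-1)}F_k$ then gives
\begin{equation*}
\Big\|\LL_0^{-\tfrac{1}{n-1}}\!\!\int_{\tau<t}\! T(t,\tau)F_k\,d\tau\Big\|_{L^2_tL^{\rr,2}_z}\lesssim 2^{2k/(n-1)}\cdot 2^{-2k/(n-1)}\|\tilde F_k\|_{L^2_tL^{\rr',2}_z}=\|\tilde F_k\|_{L^2_tL^{\rr',2}_z},
\end{equation*}
with the powers of $2^k$ cancelling exactly. To reassemble, I invoke the square function inequality on the Lorentz space $L^{\rr,2}_z$ in Proposition \ref{prop:square}, together with Minkowski in $L^{\rr,2}_z(\ell^2_k)\hookleftarrow\ell^2_k(L^{\rr,2}_z)$ (valid for $\rr\geq 2$) and in $L^2_t$, to bound
\begin{equation*}
\Big\|\LL_0^{-\tfrac{1}{n-1}}\!\!\int T(t,\tau)F\,d\tau\Big\|_{L^2_tL^{\rr,2}_z}^2\lesssim \sum_{k\in\Z}\|\tilde F_k\|_{L^2_tL^{\rr',2}_z}^2.
\end{equation*}
The right-hand sum is then controlled by $\|F\|_{L^2_tL^{\rr',2}_z}^2$ through the converse embedding $\ell^2_k(L^{\rr',2}_z)\hookrightarrow L^{\rr',2}_z(\ell^2_k)$, which is available since $\rr'\leq 2$, combined once more with Proposition \ref{prop:square}.

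The main subtle point is propagating the two Minkowski-type embeddings into the Lorentz setting with second index $2$. Both directions follow from the real interpolation description in Proposition \ref{Lorentz'} applied to the corresponding classical Lebesgue embeddings, so no new harmonic analysis beyond what is already set up in Section 2 is required.
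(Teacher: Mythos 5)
Your proposal is correct and is essentially the paper's own (very terse) proof: the paper simply invokes the Littlewood--Paley theory of Proposition \ref{prop:square}, which is exactly your combination of the frequency-localized estimate of Proposition \ref{prop:inh}, the exact cancellation $2^{2k/(n-1)}\cdot 2^{-2k/(n-1)}=1$ supplied by $\LL_0^{-1/(n-1)}$ on each dyadic block, and the Lorentz-space square-function and Minkowski-type bounds to resum. One cosmetic remark: in the final step the embedding you actually need, and which you correctly justify as being available since $\rr'\le 2$, is $L^{\rr',2}_z(\ell^2_k)\hookrightarrow \ell^2_k(L^{\rr',2}_z)$, so the arrow in that stated embedding is written in the reverse order.
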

\begin{proof}
This is a consequence of the Littlewood-Paley theory in Lemma \ref{prop:square}.
\end{proof}
\begin{remark}
This inhomogeneous inequality is not included in the above estimate\eqref{Str-L0-est} since if $q=\tilde{q}=2$, then at least, one of $(q,\rr), (\tilde{q},\tilde{\rr})$
is not in $\Lambda_s$. However, if we only consider the inhomogeneous
Strichartz estimate, we can obtain this endpoint estimate \eqref{inh} by following  the argument of \cite{KT} and \cite{HZ},
although at this moment we only have the microlocalized dispersive estimates \eqref{UiUjnear}-\eqref{UiUj2}.
For more inhomogeneous estimates, we refer the reader to \cite{Foschi, Vilela}
where the propagator satisfies the classical dispersive estimate.
\end{remark}

\begin{proof} Recall $U(t)=e^{it\sqrt{\LL_0}}$, then
$$\frac{\sin{(t-\tau)\sqrt{\LL_0}}}
{\sqrt{\LL_0}}=\LL_0^{-\frac12}(U(t)U(\tau)^*-U(-t)U(-\tau)^*)/2i.$$
Hence to show \eqref{inh},  it suffices to show the bilinear form estimate
\begin{equation}\label{TFG}
|T_{k}(F,G)|\leq 2^{2k[n(\frac12-\frac1\rr)-\frac12]} \|F\|_{L^{2}_tL^{\rr',2}_z}\|G\|_{L^{2}_tL^{\rr',2}_z},
\end{equation}
where $\rr=2(n-1)/(n-3)$ and $T_k(F,G)$ is the bilinear form
\begin{equation}
T_{k}(F,G)=\iint_{s<t}\langle U_{k}(t)U_{k}^*(\tau)F(\tau), G(t)\rangle_{L^2}~ d\tau dt
\end{equation}
where $U_k=\sum_{0\leq j\leq N} U_{j,k}$ defined in \eqref{Uk}.

On the other hand, we have proved that for all $(q,\rr)\in\Lambda_s$ with $s=n(\frac12-\frac1\rr)-\frac1q$
\begin{equation*}
\|U_{j,k}(t)f\|_{L^2_t(\R:L^{\rr,2}(X))}\lesssim
2^{k[n(\frac12-\frac1\rr)-\frac1q]} \|f\|_{L^2(X)}.
\end{equation*}
By duality, we have
\begin{equation*}
\Big\|\int_{\R}U_{j,k}(t)U_{j',k}^*(\tau)F(\tau)d\tau\Big\|_{L^q_tL^{\rr,2}_z}\lesssim 2^{2k[n(\frac12-\frac1\rr)-\frac1q]}
\|F\|_{L^{q'}_\tau L^{\rr',2}_z}, \forall 0\leq j,j'\leq N.
\end{equation*}
In particular $q=2, \rr=\frac{2(n-1)}{n-3}$, it follows that for all $0\leq j,j'\leq N$,
\begin{equation}\label{R2}
\iint_{\R^2}\langle U_{j,k}(t)U_{j',k}^*(\tau)F(\tau), G(t)\rangle_{L^2}~ d\tau dt\leq
C 2^{2k[n(\frac12-\frac1\rr)-\frac12]} \|F\|_{L^2_\tau L^{\rr',2}_z}\|G\|_{L^2_tL^{\rr',2}_z}.
\end{equation}

We need the following bilinear estimates

\begin{lemma}\label{either} Let $U_{j,k}(t)$ be defined as in \eqref{Uitj}, then for each pair $(j,j')\in \{ 0, 1, \dots, N \}^2 $
there exists a constant $C$ such that, for each $k$, either
\begin{equation}\label{bilinear:s<t}
\iint_{\tau<t}\langle U_{j,k}(t)U_{j',k}^*(\tau)F(\tau), G(t)\rangle_{L^2}~ d\tau dt\leq C 2^{2k[n(\frac12-\frac1\rr)-\frac12]}
\|F\|_{L^2_\tau L^{\rr',2}_z}\|G\|_{L^2_tL^{\rr',2}_z},
\end{equation}
or
\begin{equation}\label{bilinear:s>t}
\iint_{\tau>t}\langle U_{j,k}(t)U_{j',k}^*(\tau)F(\tau), G(t)\rangle_{L^2}~ d\tau dt\leq C 2^{2k[n(\frac12-\frac1\rr)-\frac12]}
\|F\|_{L^2_\tau L^{\rr',2}_z}\|G\|_{L^2_tL^{\rr',2}_z}.
\end{equation}
\end{lemma}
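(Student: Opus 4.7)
The plan is to combine the directional dispersive estimates from Proposition~\ref{prop:Dispersive} with Keel-Tao's double-endpoint bilinear argument \cite{KT}, executed on the appropriate half of the time plane and refined to Lorentz spaces. By Lemma~8.2 of \cite{HZ} (already invoked above), $\{1,\ldots,N\}^2$ partitions as $J_{near}\cup J_{non-inc}\cup J_{non-out}$, and pairs with $j=0$ or $j'=0$ satisfy the full dispersive bound \eqref{UiUjnear} and behave as in the near case. For $(j,j')\in J_{non-out}$, Proposition~\ref{prop:Dispersive} gives the dispersive estimate \eqref{UiUj} in $\{t<\tau\}$; for $(j,j')\in J_{non-inc}$ it gives \eqref{UiUj2} in $\{\tau<t\}$. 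Thus, to each pair is associated a half-plane where both the energy bound and the dispersive bound hold.

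On this half-plane, I would bilinearly interpolate between the $L^1\to L^\infty$ dispersive bound and the $L^2\to L^2$ bound (the first estimate of Proposition~\ref{prop:Dispersive}) exactly as in \cite[Lemma~6.1]{KT}, but using O'Neil's inequality (Proposition~\ref{Lorentz}) and Proposition~\ref{Lorentz'} to work at the Lorentz level. For $r=2(n-1)/(n-3)$ one has $1-2/r=2/(n-1)$ and the interpolated bound reads
$$|\langle U_{j,k}(t)U_{j',k}^*(\tau)F,G\rangle|\lesssim 2^{k(n+1)/(n-1)}(2^{-k}+|t-\tau|)^{-1}\|F\|_{L^{r',2}_z}\|G\|_{L^{r',2}_z},$$
valid on the half-plane. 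The exponential prefactor agrees with $2^{2k[n(1/2-1/r)-1/2]}$. Integrating in $(t,\tau)$ on $\{\tau<t\}$ (respectively $\{\tau>t\}$) then reduces the desired estimate to bounding
$$\iint_{\tau<t}(2^{-k}+|t-\tau|)^{-1}\|F(\tau)\|_{L^{r',2}_z}\|G(t)\|_{L^{r',2}_z}d\tau\,dt\lesssim \|F\|_{L^2_\tau L^{r',2}_z}\|G\|_{L^2_t L^{r',2}_z}.$$
This I would obtain via the Keel-Tao atomic bilinear argument \cite[Sections~5--7]{KT}: decompose the time gap dyadically $|t-\tau|\sim 2^j$, apply Schur's test on each shell using two competing bounds (the energy bound dominating for $2^j\lesssim 2^{-k}$ and the dispersive bound dominating for $2^j\gg 2^{-k}$), and absorb the resulting logarithmic divergence in $j$ through the real-interpolation characterization of $L^{r',2}$ (Proposition~\ref{Lorentz'}), applied to dyadic atomic decompositions of $F$ and $G$ at the level sets of their Lorentz norms.

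The main obstacle is that the convolution kernel $(2^{-k}+|t-\tau|)^{-1}$ lies only in weak $L^1$, so it is \emph{not} bounded on $L^2(\mathbb{R})$ and the desired bilinear inequality cannot survive on the full line. Closure requires simultaneously exploiting two ingredients: (i) the geometric restriction to a single half-region of $\{(t,\tau)\}$, which is precisely what the microlocal non-outgoing/non-incoming structure of $Q_j(\lambda)\,dE_{\sqrt{\LL_0}}(\lambda)\,Q_{j'}(\lambda)^*$ delivers through Lemma~\ref{lem:sign} and Proposition~\ref{prop:Dispersive}, and (ii) the Lorentz-space refinement $L^{r',2}$ in place of $L^{r'}$, which is why the square-function estimate \eqref{squareL} was proved at the Lorentz level in Proposition~\ref{prop:square}. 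The decisive step is to check that the half-region forced upon us by the phase-space classification coincides with the half-region on which the Keel-Tao atomic summation converges; the need to match these two half-regions is exactly why the conclusion takes the ``either/or'' form rather than asserting both \eqref{bilinear:s<t} and \eqref{bilinear:s>t} simultaneously for every pair.
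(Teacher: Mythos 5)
Your proposal follows essentially the same route as the paper: classify the pairs $(j,j')$ via $J_{near}\cup J_{non-inc}\cup J_{non-out}$ (with $j=0$ or $j'=0$ treated as the near case), use the energy bound together with the directional dispersive estimate of Proposition~\ref{prop:Dispersive} on the corresponding half-plane $\{\tau<t\}$ or $\{\tau>t\}$, and run the Keel--Tao endpoint bilinear (dyadic plus real-interpolation) argument there, with the Lorentz refinement as remarked in \cite[Sections 6 and 10]{KT}; your numerology ($\sigma(1-2/\rr)=1$, prefactor $2^{k(n+1)/(n-1)}=2^{2k[n(\frac12-\frac1\rr)-\frac12]}$) is consistent. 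This matches the paper's proof, which likewise reduces the lemma to the KT machinery applied on the half-plane where the relevant dispersive estimate is available, so no further changes are needed.
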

We postpone the proof for a moment. Hence for every pair $(j,j')$, we have by \eqref{bilinear:s<t} or subtracting \eqref{bilinear:s>t} from \eqref{R2}
\begin{equation*}
\iint_{\tau<t}\langle U_{j,k}(t)U_{j',k}^*(\tau)F(\tau), G(t)\rangle_{L^2}~ d\tau dt\leq C 2^{2k[n(\frac12-\frac1\rr)-\frac12]}
\|F\|_{L^2_\tau L^{\rr',2}_z}\|G\|_{L^2_tL^{\rr',2}_z}.
\end{equation*}
Finally by summing over all $j$ and $j'$, we obtain \eqref{TFG}. Once we prove Lemma \ref{either}, we complete the proof of Proposition \ref{prop:inh}.

\end{proof}

\begin{proof}[Proof of Lemma~\ref{either}] Without loss of generality, by scaling argument, we may assume $k=0$. In the case that $(j,j') \in J_{near}$ or $(j,j')=(j,0)$ or $(j,j')=(0,j')$,
we have the dispersive estimate \eqref{UiUjnear}. We
apply the argument of \cite[Sections 4--7]{KT} to obtain
\eqref{bilinear:s<t}. If $(j,j') \in J_{non-out}$, we
obtain \eqref{bilinear:s<t} adapting the argument in \cite{KT}
due to the dispersive estimate \eqref{UiUj2} when $\tau < t$.
Finally, in the case that $(j,j') \in J_{non-inc}$, we obtain
\eqref{bilinear:s>t} since we have the dispersive estimate
\eqref{UiUj} for $\tau> t$. We mention here that we have sharpened the inequality to the Lorentz norm by the interpolation as remarked in \cite[Section 6 and Section 10]{KT}.
\end{proof}

\section{Strichartz estimates for wave equation with $\LL_V$}\label{subsec:4.2}
In this section, we prove the Strichartz estimate for $\LL_V$ by using Proposition \ref{Str-L0} and establishing a local smoothing estimate.

\subsection{A local-smoothing estimate}

In this subsection, we prove a global-in-time local-smoothing estimate.
It worths pointing out that we directly prove the local smoothing estimate avoiding the usual method via resolvent estimate of $\LL_V$.

\begin{proposition}\label{prop:loc} Let $u$ be the solution of \eqref{equ:wave}, then there exists a constant $C$
independent of $(u_0,u_1)$ such that
\begin{equation}\label{local-s}
\begin{split}
\|r^{-\beta}u(t,z)\|_{L^2_t(\R;L^2(X))}\leq
C\left(\|u_0\|_{\dot H^{\beta-\frac12}(X)}+\|u_1\|_{\dot H^{\beta-\frac32}(X)}\right),
\end{split}
\end{equation}
where $z=(r,y)\in X$, $1/2<\beta<1+\nu_0$ with $\nu_0>0$ such that $\nu_0^2$ is the smallest eigenvalue of $\Delta_h+V_0(y)+(n-2)^2/4$.
\end{proposition}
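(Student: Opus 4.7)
\textbf{Proof plan for Proposition \ref{prop:loc}.}

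The strategy is to bypass resolvent estimates entirely and exploit the explicit separation-of-variables formula \eqref{s.exp} for the half-wave propagator. Writing
$$u(t,z)=\cos(t\sqrt{\LL_V})u_0+\frac{\sin(t\sqrt{\LL_V})}{\sqrt{\LL_V}}u_1,$$
it suffices by linearity to prove the estimate for $e^{\pm it\sqrt{\LL_V}}u_0$ with $u_0\in\dot H^{\beta-\frac12}$; the $u_1$ contribution follows identically after absorbing the extra $\rho^{-1}$ factor from $1/\sqrt{\LL_V}$, which accounts for the regularity shift to $\dot H^{\beta-\frac32}$. Expanding $u_0=\sum_{\nu,\ell}a_{\nu,\ell}(r)\varphi_{\nu,\ell}(y)$ as in \eqref{sep.v} and setting $b_{\nu,\ell}=\mathcal{H}_\nu a_{\nu,\ell}$, formula \eqref{s.exp} gives
$$e^{it\sqrt{\LL_V}}u_0=\sum_{\nu\in\chi_\infty}\sum_{\ell=1}^{d(\nu)}\varphi_{\nu,\ell}(y)F_{\nu,\ell}(t,r),\quad F_{\nu,\ell}(t,r)=\int_0^\infty(r\rho)^{-\frac{n-2}{2}}J_\nu(r\rho)e^{it\rho}b_{\nu,\ell}(\rho)\rho^{n-1}d\rho.$$

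Next I would use orthonormality of $\{\varphi_{\nu,\ell}\}$ on $Y$ to compute
$$\|r^{-\beta}u\|_{L^2_tL^2_z}^2=\sum_{\nu,\ell}\int_\R\int_0^\infty r^{-2\beta+n-1}|F_{\nu,\ell}(t,r)|^2\,dr\,dt.$$
For each fixed $r$, $F_{\nu,\ell}(t,r)$ is (up to constants) the Fourier transform in $\rho$ of $g_{\nu,\ell}(r,\rho):=(r\rho)^{-(n-2)/2}J_\nu(r\rho)b_{\nu,\ell}(\rho)\rho^{n-1}\mathbf{1}_{\rho>0}$. Plancherel in $t$ therefore yields
$$\int_\R|F_{\nu,\ell}(t,r)|^2\,dt=2\pi\int_0^\infty(r\rho)^{-(n-2)}|J_\nu(r\rho)|^2|b_{\nu,\ell}(\rho)|^2\rho^{2(n-1)}\,d\rho.$$
After inserting this and using Fubini, the substitution $u=r\rho$ separates the radial and spectral variables and produces
$$\|r^{-\beta}u\|_{L^2_tL^2_z}^2=2\pi\sum_{\nu,\ell}C_\nu(\beta)\int_0^\infty|b_{\nu,\ell}(\rho)|^2\rho^{n+2\beta-2}\,d\rho,\qquad C_\nu(\beta):=\int_0^\infty u^{1-2\beta}|J_\nu(u)|^2\,du.$$

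The key step is to control $C_\nu(\beta)$ uniformly in $\nu\in\chi_\infty$. By the Weber--Schafheitlin formula,
$$C_\nu(\beta)=\frac{\Gamma(2\beta-1)\,\Gamma(\nu-\beta+1)}{2^{2\beta-1}\Gamma(\beta)^2\,\Gamma(\nu+\beta)},$$
which is finite exactly when $\beta>\tfrac12$ (to handle the $u\to\infty$ contribution using the decay $|J_\nu(u)|\lesssim u^{-1/2}$) and $\beta<\nu+1$ (to handle the $u\to 0$ contribution using $|J_\nu(u)|\lesssim u^\nu$). Both conditions are guaranteed by the hypothesis $\tfrac12<\beta<1+\nu_0\leq 1+\nu$. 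Stirling's formula gives $\Gamma(\nu-\beta+1)/\Gamma(\nu+\beta)\sim\nu^{1-2\beta}$ as $\nu\to\infty$, so $C_\nu(\beta)$ is uniformly bounded for $\nu\geq\nu_0$. Comparing with the Plancherel-type identity
$$\|u_0\|_{\dot H^{\beta-1/2}}^2=\sum_{\nu,\ell}\int_0^\infty|b_{\nu,\ell}(\rho)|^2\rho^{n+2\beta-2}\,d\rho,$$
which follows from the spectral calculus \eqref{funct} together with the fact that $\mathcal{H}_\nu$ is an isometry on $L^2(r^{n-1}dr)$, we obtain the desired bound.

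The main obstacle is verifying that $C_\nu(\beta)$ stays bounded as $\nu$ ranges over the possibly unbounded set $\chi_\infty$; the conditions $\beta>\tfrac12$ and $\beta<1+\nu_0$ are precisely what is needed, which explains the sharp constraint on $\beta$ appearing in the statement. Once this uniformity is established the rest of the argument is a direct Plancherel-plus-change-of-variables computation with no loss.
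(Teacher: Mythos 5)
Your proposal is correct, and it follows the same overall strategy as the paper's proof (which likewise avoids resolvent estimates): expand via \eqref{s.exp}, use orthogonality of the $\varphi_{\nu,\ell}$ in $L^2(Y)$ and Plancherel in $t$, and reduce to a weighted $L^2$ bound on the Hankel-transform side, with $\|u_0\|_{\dot H^{\beta-1/2}}^2$ recovered from the isometry of $\mathcal{H}_\nu$. The difference is in the key radial step: the paper performs a dyadic decomposition in both $\rho$ and $r$ and bounds the resulting pieces $Q_{\nu,\ell}(R,M)$ as in \eqref{est:Q}, using only the crude small-argument bound \eqref{bessel-r} and the dyadic $L^2$ bound \eqref{est:b} of Lemma \ref{lem: J}, summing the geometric series in $R$ exactly under $1/2<\beta<1+\nu$; you instead separate variables exactly with the substitution $u=r\rho$ and evaluate $C_\nu(\beta)=\int_0^\infty u^{1-2\beta}J_\nu(u)^2\,du$ in closed form by Weber--Schafheitlin, controlling the Gamma ratio uniformly in $\nu$ via Stirling (indeed $\Gamma(\nu-\beta+1)/\Gamma(\nu+\beta)$ is decreasing in $\nu$ when $\beta>1/2$, so the supremum over $\chi_\infty$ sits at $\nu=\nu_0$ and is finite precisely because $\beta<1+\nu_0$). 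Your route is slightly sharper and makes the endpoint conditions $\beta>1/2$, $\beta<1+\nu_0$ completely transparent, at the price of invoking a special-function identity and its validity range $0<2\beta-1<2\nu+1$, which you do check; the paper's dyadic argument is more robust and needs no exact identity, only the two elementary Bessel bounds. One small caution: the heuristic $|J_\nu(u)|\lesssim u^{-1/2}$ is not uniform in $\nu$ (near the turning point $u\sim\nu$ one only has $\nu^{-1/3}$ decay), but since you use it only to justify convergence at fixed $\nu$ and delegate uniformity to the exact formula, this causes no gap.
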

\begin{remark} In \cite{BPST}, Burq et al. established the resolvent estimate and thus proved a same estimate, called Morawetz estimate,
in Euclidean space with $\beta=1$.
\end{remark}

\begin{proof}We modify the proof of the argument in our previous paper \cite{ZZ1} for Schr\"odinger.
A key observation is that the norms in the both sides of the local smoothing are based on
$L^2$-space which allows us to use orthogonality of eigenfunctions. Without loss of generality, we assume $u_1=0$. Since
\begin{equation}\label{sleq}
\begin{split}
u(t,z)
=\frac12\left(e^{it\sqrt{\LL_V}}+e^{-it\sqrt{\LL_V}}\right)u_0,
\end{split}
\end{equation}
we only consider the estimate of $e^{it\sqrt{\LL_V}}u_0$.
Recall $$
u_0(z)=\sum_{\nu\in\chi_\infty}\sum_{\ell=1}^{d(\nu)}a_{\nu,\ell}(r)\varphi_{\nu,\ell}(y), \quad  b_{\nu,\ell}(\rho)=(\mathcal{H}_{\nu}a_{\nu,\ell})(\rho).
$$
By \eqref{funct} with $F(\rho)=e^{it\rho}$, we will estimate
\begin{equation}
\begin{split}
e^{it\sqrt{\LL_V}}u_0&=\sum_{\nu\in\chi_\infty}\sum_{\ell=1}^{d(\nu)}\varphi_{\nu,\ell}(\theta)\int_0^\infty(r\rho)^{-\frac{n-2}2}J_{\nu}(r\rho)e^{
it\rho}b_{\nu,\ell}(\rho)\rho^{n-1}d\rho.
\end{split}
\end{equation}
By the Plancherel theorem with respect to time $t$, it suffices to estimate
\begin{equation*}
\begin{split}
\int_{X}\int_0^\infty\big|\sum_{\nu\in\chi_\infty}\sum_{\ell=1}^{d(\nu)}\varphi_{\nu,\ell}(\theta)(r\rho)^{-\frac{n-2}2}J_{\nu}(r\rho)
b_{\nu,\ell}(\rho)\rho^{n-1}\big|^2d\rho r^{-2\beta}d\mu(z)
\end{split}
\end{equation*}
Using the orthogonality, one has
\begin{equation*}
\begin{split}
\int_{Y} \big|\sum_{\nu\in\chi_\infty}\sum_{\ell=1}^{d(\nu)}\varphi_{\nu,\ell}(\theta)J_{\nu}(r\rho)b_{\nu,\ell}(\rho)
\big|^2 d\theta=\sum_{\nu\in\chi_\infty}\sum_{\ell=1}^{d(\nu)}\big|J_{\nu}(r\rho)b_{\nu,\ell}(\rho)
\big|^2
\end{split}
\end{equation*}
then we see that the above is equal to
\begin{equation*}
\begin{split}
\sum_{\nu\in\chi_\infty}\sum_{\ell=1}^{d(\nu)}\int_0^\infty\int_0^\infty\big|(r\rho)^{-\frac{n-2}2}J_{\nu}(r\rho)b_{\nu,\ell}(\rho)
\rho^{n-1}\big|^2 d\rho r^{n-1-2\beta}dr.
\end{split}
\end{equation*}
To estimate it, we make a dyadic decomposition into the integral. Let $\chi$ be a smoothing function supported in $[1,2]$, we see that the above is less than
\begin{equation}\label{scal-reduce}
\begin{split}
&\sum_{\nu\in\chi_\infty}\sum_{\ell=1}^{d(\nu)}\sum_{M\in2^{\Z}}\int_0^\infty\int_0^\infty\big|(r\rho)^{-\frac{n-2}2}J_{\nu}(r\rho)b_{\nu,\ell}(\rho)
\rho^{n-1}\chi(\frac{\rho}{M})\big|^2d\rho r^{n-1-2\beta}dr\\&\lesssim
\sum_{\nu\in\chi_\infty}\sum_{\ell=1}^{d(\nu)}\sum_{M\in2^{\Z}}\sum_{R\in2^{\Z}}M^{n-1+2\beta}R^{n-1-2\beta}\int_{R}^{2R}
\int_{0}^\infty\big|(r\rho)^{-\frac{n-2}2}J_{\nu}(r\rho)b_{\nu,\ell}(M\rho)\chi(\rho)
\big|^2 d\rho dr.
\end{split}
\end{equation}
Let
\begin{equation}\label{def:G}
\begin{split}
Q_{\nu,\ell}(R,M)=\int_{R}^{2R}\int_{0}^\infty\big|(r\rho)^{-\frac{n-2}2}J_{\nu}(r\rho)b_{\nu,\ell}(M\rho)\chi(\rho)
\big|^2 d\rho  dr.\end{split}
\end{equation}
Then we have the following inequality
\begin{equation}\label{est:Q}
Q_{\nu,\ell}(R,M) \lesssim
\begin{cases}
R^{2\nu-n+3}M^{-n}\|b_{\nu,\ell}(\rho)\chi(\frac{\rho}M)\rho^{\frac{n-1}2}\|^2_{L^2},~
R\lesssim 1;\\
R^{-(n-2)}M^{-n}\|b_{\nu,\ell}(\rho)\chi(\frac{\rho}M)\rho^{\frac{n-1}2}\|^2_{L^2},~
R\gg1.
\end{cases}
\end{equation}
We postpone the proof for a moment.  By \eqref{est:Q} we turn to estimate
\begin{equation*}
\begin{split}
&\sum_{\nu\in\chi_\infty}\sum_{\ell=1}^{d(\nu)}\sum_{M\in2^{\Z}}\int_0^\infty\int_0^\infty\big|(r\rho)^{-\frac{n-2}2}J_{\nu}(r\rho)b_{\nu,\ell}(\rho)
\rho^{n-1}\chi(\frac{\rho}{M})\big|^2 d\rho r^{n-1-2\beta} dr\\&\lesssim
\sum_{\nu\in\chi_\infty}\sum_{\ell=1}^{d(\nu)}\sum_{M\in2^{\Z}}\sum_{R\in2^{\Z}}M^{n-1+2\beta}R^{n-1-2\beta}Q_{\nu,\ell}(R,M)\\&\lesssim
\sum_{\nu\in\chi_\infty}\sum_{\ell=1}^{d(\nu)}\sum_{M\in2^{\Z}}\Big(\sum_{R\in2^{\Z}, R\lesssim 1}M^{n-1+2\beta}R^{n-1-2\beta}R^{2\nu-n+3}M^{-n}
\\&\quad\qquad+\sum_{R\in2^{\Z}, R\gg 1}M^{n-1+2\beta}R^{n-1-2\beta}R^{-(n-2)}M^{-n}\Big)\|b_{\nu,\ell}(\rho)\chi(\frac{\rho}M)\rho^{\frac{n-1}2}\|^2_{L^2}
\\&\lesssim
\sum_{\nu\in\chi_\infty}\sum_{\ell=1}^{d(\nu)}\sum_{M\in2^{\Z}}\Big(\sum_{R\in2^{\Z}, R\lesssim 1}M^{2\beta-1}R^{2(1+\nu-\beta)}
+\sum_{R\in2^{\Z}, R\gg 1}M^{2\beta-1}R^{1-2\beta}\Big)\|b_{\nu,\ell}(\rho)\chi(\frac{\rho}M)\rho^{\frac{n-1}2}\|^2_{L^2}.
\end{split}
\end{equation*}
Note that if $\frac12<\beta<1+\nu_0 $ the summations in $R$ converges  and further converges to $\|u_0\|^2_{\dot H^{\beta-\frac12}(X)}$. Hence we prove \eqref{local-s}.
Now we are left to prove \eqref{est:Q}. To this end, we break it into two
cases.\vspace{0.2cm}

$\bullet$ Case 1: $R\lesssim1$. Since $\rho\sim1$, thus
$r\rho\lesssim1$. By
\eqref{bessel-r}, we obtain
\begin{equation*}
\begin{split}
Q_{\nu,\ell}(R,M)&\lesssim\int_{R}^{2R}\int_{0}^\infty\Big| \frac{
(r\rho)^{\nu}(r\rho)^{-\frac{n-2}2}}{2^{\nu}\Gamma(\nu+\frac12)\Gamma(\frac12)}b_{\nu,\ell}(M\rho)\chi(\rho)\Big|^2 d\rho dr\\& \lesssim
R^{2\nu-n+3}M^{-n}\|b_{\nu,\ell}(\rho)\chi(\frac{\rho}M)\rho^{\frac{n-1}2}\|^2_{L^2}.
\end{split}
\end{equation*}

$\bullet$ Case 2: $R\gg1$. Since $\rho\sim1$, thus $r\rho\gg 1$. We
estimate by \eqref{est:b} in Lemma \ref{lem: J}
\begin{equation*}
\begin{split}
Q_{\nu,\ell}(R,M)&\lesssim
R^{-(n-2)}\int_{0}^\infty\big|b_{\nu,\ell}(M\rho)\chi(\rho)\big|^2\int_{R}^{2R}\big|J_{\nu}(
r\rho)\big|^2  dr d\rho\\& \lesssim R^{-(n-2)}\int_{0}^\infty\big|b_{\nu,\ell}(M\rho)\chi(\rho)\big|^2 d\rho\lesssim
R^{-(n-2)}M^{-n}\|b_{\nu,\ell}(\rho)\chi(\frac{\rho}M)\rho^{\frac{n-1}2}\|^2_{L^2}.
\end{split}
\end{equation*}
Thus we prove \eqref{est:Q}. Therefore, we prove the local smoothing estimate.

\end{proof}

\begin{remark}\label{rem:loc-s} By constructing the similar counterexample as in Subsection 6.3, we can see the restriction $\beta<1+\nu_0$ is necessary for \eqref{local-s}. However,

(i) if $V=0$, then $\beta<\frac n2$ is required since the positive square root of the smallest eigenvalue of $\Delta_h+(n-2)^2/4$ is greater than $(n-2)/2$;

(ii) if the initial data, say $u_0$, belongs to $\bigoplus_{\nu\in\chi_\infty,\nu>k} \mathcal{H}^{\nu}\cap\dot H^{\beta-\frac12}(X)$ where $k>\nu_0$, then one
can relax the restriction on $\beta$ to  $\beta<1+k$.

\end{remark}

\subsection{The proof of Strichartz estimates }
Let $v$ be as in Proposition \ref{Str-L0} with $F=0$ and suppose that $u$ solves the equation
\begin{equation*}
\partial_{t}^2u+\LL_V u=0, \quad u(0)=u_0,
~\partial_tu(0)=u_1,
\end{equation*}
we have by the Duhamel formula
\begin{equation}\label{duhamel}
\begin{split}
u(t,z)&=\frac{e^{it\sqrt{\LL_V}}+e^{-it\sqrt{\LL_V}}}2 u_0+\frac{e^{it\sqrt{\LL_V}}-e^{-it\sqrt{\LL_V}}}{2i\sqrt{\LL_V}}u_1\\&=v(t,z)+\int_0^t\frac{\sin{(t-\tau)\sqrt{\LL_0}}}
{\sqrt{\LL_0}}(V(z)u(\tau,z))d\tau.
\end{split}
\end{equation}
From the spectral theory on $L^2$, we have the Strichartz estimate for $(q,\rr)=(\infty, 2)$. By using the Sobolev inequality in Proposition \ref{P:sobolev} ,
we obtain
\begin{equation*}
\begin{split}
\|u(t,z)\|_{L^\infty(\R;L^{\rr}(X))}&\lesssim \|\LL^{\frac s2}_V u(t,z)\|_{L^\infty(\R;L^{2}(X))}\\
&\lesssim \|u_0\|_{\dot H^{s}(X)}+\|u_1\|_{\dot H^{s-1}(X)}\end{split}
\end{equation*}
where $s=n(1/2-1/\rr)<2$ and $2\leq \rr<n/\max\{\frac n2-1-\nu_0,  0\}$. Note that the restriction $s<2$ implies $\rr<2n/(n-4)$
which is a artificial restriction, thus we can get rid of this restriction by using an iterating argument
as in Corollary \ref{P:sobolev'} .

If $\nu_0>1/(n-1)$, then we have
\begin{equation}2\leq \rr< \begin{cases} \infty, \quad &n=3;\\
n/\max\{\frac n2-1-\nu_0,0\},\quad & n\geq4,
\end{cases}
\end{equation}
which is corresponding to $0<s<1+\nu_0$.
On the other hand, by Proposition \ref{Str-L0} with $s\in\R$ and H\"older's inequality in Proposition \ref{Lorentz}, we show that
\begin{equation*}
\begin{split}
&\|u(t,z)\|_{L^q(\R;L^{\rr}(X))}\\&\lesssim \|u_0\|_{\dot H^{s}(X)}+\|u_1\|_{\dot H^{s-1}(X)}+\Big\|\int_0^t\frac{\sin{(t-\tau)\sqrt{\LL_0}}}
{\sqrt{\LL_0}}(V(z)u(\tau,z))d\tau\Big\|_{_{L^q(\R;L^{\rr}(X))}}
\end{split}
\end{equation*}

Now our main task is to estimate
\begin{equation}\label{est:inh}
\begin{split}
\Big\|\int_0^t\frac{\sin{(t-\tau)\sqrt{\LL_0}}}
{\sqrt{\LL_0}}(V(z)u(\tau,z))d\tau\Big\|_{_{L^q(\R;L^{\rr}(X))}}.
\end{split}
\end{equation}
Note that if the set $\Lambda_s$ is not empty, we must have $s\geq0$. Indeed, if $(q,\rr)\in\Lambda_s$, then
\begin{equation}\label{s:index}
s=n\Big(\frac12-\frac1\rr\Big)-\frac1q\geq\frac12(n+1)\Big(\frac12-\frac1\rr\Big)\geq0.
\end{equation}
Therefore, without loss of generality,  we may assume
$s>0$. \vspace{0.2cm}

Now we argue Theorem \ref{thm:Strichartz} by considering the following four cases.\vspace{0.1cm}

{\bf Case I: $ 0<s<\frac12+\nu_0, q>2$. }
Let $\frac12<\beta<n/2$, by using Proposition \ref{prop:loc} and Remark \ref{rem:loc-s}, we define the operator
$$T: L^2(X)\to L^2(\R;L^2(X)), \quad T f= r^{-\beta}e^{it\sqrt{\LL_0}}\LL_0^{\frac12(\frac12-\beta)} f.$$
Thus from the proof of the local smoothing estimate, it follows that $T$ is a bounded operator.
By the duality, we obtain that for its adjoint $T^*$ $$T^*: L^2(\R;L^2(X))\to L^2, \quad
T^* F=\int_{\tau\in\R}\LL_0^{\frac12(\frac12-\beta)} e^{-i\tau\sqrt{\LL_0}}  r^{-\beta}  F(\tau)d\tau$$
which is also bounded. Define the operator
$$B: L^2(\R;L^2(X))\to L^q(\R;L^r(X)), \quad B F=\int_{\tau\in\R} \frac{e^{i(t-\tau)\sqrt{\LL_0}}}{\sqrt{\LL_0}} r^{-\beta}F(\tau)d\tau.$$
Hence by the Strichartz estimate with $s=\frac32-\beta$, one has
\begin{equation}\label{BF}
\begin{split}
&\|B F\|_{L^q(\R;L^\rr(X))}\\&=\big\| e^{i t\sqrt{\LL_0}}\int_{\tau\in\R}\frac{e^{-i\tau\sqrt{\LL_0}}}{\sqrt{\LL_0}} r^{-\beta} F(\tau)d\tau\big\|_{L^q(\R;L^\rr(X))}\\
&\lesssim \big\|\int_{\tau\in\R}\frac{e^{-i\tau\sqrt{\LL_0}}}{\sqrt{\LL_0}} r^{-\beta} F(\tau)d\tau\big\|_{\dot H^{\frac32-\beta}(X)}
=\|T^*F\|_{L^2}\lesssim \|F\|_{L^2(\R;L^2(X))}.
\end{split}
\end{equation}
Now we estimate \eqref{est:inh}. Note that
$$\sin(t-\tau)\sqrt{\LL_0}=\frac{1}{2i}\big(e^{i(t-\tau)\sqrt{\LL_0}}-e^{-i(t-\tau)\sqrt{\LL_0}}\big),$$
thus by \eqref{BF}, we have a minor modification of \eqref{est:inh}
\begin{equation*}
\begin{split}
&\Big\|\int_\R\frac{\sin{(t-\tau)\sqrt{\LL_0}}}
{\sqrt{\LL_0}}(V(z)u(\tau,z))d\tau\Big\|_{L^q(\R;L^{\rr}(X))}\\&\lesssim \|B(r^{\beta}V(z)u(\tau,z))\|_{L^q(\R;L^\rr(X))}\lesssim \|r^{\beta-2}u(\tau,z))\|_{L^2(\R;L^2(X))}
\\&\lesssim \|u_0\|_{\dot H^{\frac32-\beta}(X)}+\|u_1\|_{\dot H^{\frac12-\beta}(X)}
\end{split}
\end{equation*}
where we use the local smoothing estimate in Proposition \ref{prop:loc} again in the last inequality and we need $1-\nu_0<\beta<3/2$ such that $1/2<2-\beta<1+\nu_0$.
Therefore the above statement holds for all $\max\{1/2,1-\nu_0\}<\beta<3/2$.
By the Christ-Kiselev lemma \cite{CK}, thus we have shown that for $q>2$ and $(q,\rr)\in\Lambda_{s,\nu_0}=\Lambda_s$ with $s=\frac32-\beta$
\begin{equation}
\begin{split}
\eqref{est:inh}\lesssim \|u_0\|_{\dot H^{s}(X)}+\|u_1\|_{\dot H^{s-1}(X)}.
\end{split}
\end{equation}
 We remark that we have proved all  $(q,\rr)\in\Lambda_s$ with $q>2$ and $s$ such that $0<s<\min\{1,\frac12+\nu_0\}$.
 Now we relax the restriction to $s<\frac12+\nu_0$ when $\nu_0\geq 1/2$.
 For $1\leq s<\frac12+\nu_0$ and any $(q,\rr)\in\Lambda_s$, then there exists a pair $(q,\tilde{\rr})\in \Lambda_{\tilde{s}}$ with $\tilde{s}=1_-$ such that
 \begin{equation*}
\begin{split}
\| u(t,z)\|_{L^q(\R;L^{\rr}(X))}&\lesssim \|\LL_V^{(s-\tilde{s})/2} u(t,z)\|_{L^q(\R;L^{\tilde{\rr}}(X))}\\
&\lesssim \|u_0\|_{\dot H^{s}(X)}+\|u_1\|_{\dot H^{s-1}(X)}.
\end{split}
\end{equation*}
Indeed,  the Sobolev inequality of Corollary \ref{P:sobolev'} shows the first inequality and the above result implies the second one.

Therefore we have proved all  $(q,\rr)\in\Lambda_s$ and $s$ such that $0<s<\frac12+\nu_0$
except the endpoint admissible pair with $q=2$ when $s\geq s_0:=(n+1)/2(n-1)$ and $n\geq 4$.

 \vspace{0.2cm}

  {\bf Case II:} $0<\nu_0\leq \frac1{n-1}$. In this case, if $(q,\rr)\in\Lambda_{s,\nu_0}$, then $q>2$. Hence it suffices to fix the gap $\frac12+\nu_0\leq s<1+\nu_0$.
  To this end, we split the initial data into two parts: one is projected to $\mathcal{H}^\nu$ with $\nu\leq 1+\nu_0$ and the other is the remaining terms. Without loss of generality, we assume $u_1=0$ and
  divide $u_0=u_{0,l}+u_{0,h}$ where $u_{0,h}=u_0-u_{0,l}$ and
 \begin{equation}
 u_{0,l}=\sum_{\nu\in A}\sum_{\ell=1}^{d(\nu)}a_{\nu,\ell}(r)\varphi_{\nu,\ell}(y), \quad A=\{\nu\in\chi_\infty: \nu\leq 1+\nu_0\}.
 \end{equation}
 For the part involving $u_{0,h}$, we can repeat the argument of {\bf Case I}. In this case, as in Remark \ref{rem:loc-s},
 we can use Proposition \ref{prop:loc} with $1/2<2-\beta<2+\nu_0$.
 Thus we obtain the Strichartz estimate on $e^{it\sqrt{\LL_V}}u_{0,h}$ for $\Lambda_{s,\nu_0}$ with $s\in[\frac12+\nu_0,1+\nu_0)$. We remark here that the set $\Lambda_{s,\nu_0}$ is empty when $s\geq1+\nu_0$.

 Next we consider the Strichartz estimate for $e^{it\sqrt{\LL_V}}u_{0,l}$. We follow the argument of \cite{PST} which treated a radial case.
Recall
 \begin{equation}
\begin{split} e^{it\sqrt{\LL_V}}u_{0,l}&=
\sum_{\nu\in A}\sum_{\ell=1}^{d(\nu)}\varphi_{\nu,\ell}(y)\int_0^\infty(r\rho)^{-\frac{n-2}2}J_{\nu}(r\rho)e^{it\rho}\mathcal{H}_\nu(a_{\nu,\ell})\rho^{n-1}d\rho,\\
&=\sum_{\nu\in A}\sum_{\ell=1}^{d(\nu)}\varphi_{\nu,\ell}(y)\mathcal{H}_\nu[e^{it\rho}\mathcal{H}_\nu(a_{\nu,\ell})](r).
\end{split}
\end{equation}
Since $\nu\in A$, therefore there exists a constant $C_{\nu_0}$ depending on $\nu_0$ such that
 \begin{equation}
\begin{split} \|e^{it\sqrt{\LL_V}}u_{0,l}\|_{L^q(\R;L^\rr(X))}&\leq C_{\nu_0}
\sum_{\nu\in A}\sum_{\ell=1}^{d(\nu)}\left\|\mathcal{H}_\nu[e^{it\rho}\mathcal{H}_\nu(a_{\nu,\ell})](r)\right\|_{L^q(\R;L^\rr_{r^{n-1}dr})}.
\end{split}
\end{equation}
Let $\mu=(n-2)/2$ and recall $\mathcal{H}_\mu\mathcal{H}_\mu=Id$, then it suffices to estimate
 \begin{equation}
\begin{split}
\sum_{\nu\in A}\sum_{\ell=1}^{d(\nu)}\left\|(\mathcal{H}_\nu\mathcal{H}_\mu)\mathcal{H}_\mu[e^{it\rho}
\mathcal{H}_\mu(\mathcal{H}_\mu\mathcal{H}_\nu)(a_{\nu,\ell})](r)\right\|_{L^q(\R;L^\rr_{r^{n-1}dr})}.
\end{split}
\end{equation}
For our purpose, we recall \cite[Theorem 3.1]{PST} which claimed that the operator $\mathcal{K}^0_{\mu,\nu}:=\mathcal{H}_\mu\mathcal{H}_\nu$ is continuous on $L^p_{r^{n-1}dr}([0,\infty))$ if
$$\max\{((n-2)/2-\mu)/n,0\}<1/p<\min\{((n-2)/2+\nu+2)/n,1\}.$$
Notice $\mu=\frac{n-2}2$, on one hand, we have that both $\mathcal{K}^0_{\mu,\nu}$ and $\mathcal{K}^0_{\nu,\mu}$ are bounded in
$L^p_{r^{n-1}dr}([0,\infty))$ provided $\frac1p>\frac12-\frac{1+\nu}n$. One can check that $\frac1\rr>\frac12-\frac{1+\nu_0}n$
satisfies the condition since $\nu\geq \nu_0$. On the other hand,
$\mathcal{H}_\mu[e^{it\rho}\mathcal{H}_\mu]$ is a classical half-wave propagator in the radial case which has Strichartz estimate with $(q,\rr)\in\Lambda_s$.
In sum, for $(q,\rr)\in\Lambda_{s,\nu_0}$, we have
 \begin{equation}
\begin{split}
&\|e^{it\sqrt{\LL_V}}u_{0,l}\|_{L^q(\R;L^\rr(X))}\\&\leq C_{\nu_0}\sum_{\nu\in A}\sum_{\ell=1}^{d(\nu)}\left\|(\mathcal{H}_\nu\mathcal{H}_\mu)\mathcal{H}_\mu[e^{it\rho}\mathcal{H}_\mu(\mathcal{H}_\mu\mathcal{H}_\nu)(a_{\nu,\ell})](r)\right\|_{L^q(\R;L^\rr_{r^{n-1}dr})}\\
&\leq C_{\nu_0}\sum_{\nu\in A}\sum_{\ell=1}^{d(\nu)}\left\|(\mathcal{H}_\mu\mathcal{H}_\nu)(a_{\nu,\ell})](r)\right\|_{\dot H^s}\leq C_{\nu_0}\left(\sum_{\nu\in A}\sum_{\ell=1}^{d(\nu)}\left\|a_{\nu,\ell}(r)\right\|^2_{\dot H^s}\right)^{1/2}\leq C_{\nu_0}\|u_{0,l}\|_{\dot H^s}.
\end{split}
\end{equation}
In the second inequality, we use \cite[Theorem 3.8]{PST}.\vspace{0.2cm}

 {\bf Case III:} $\nu_0> \frac1{n-1}$, $q=2$ and $n\geq4$.
 In this case, we aim to prove
\begin{equation}\label{equ:en}
\begin{split}
\|u(t,z)\|_{L^2(\R;L^{\frac{2(n-1)}{n-3}}(X))}\lesssim \|u_0\|_{\dot H^{\frac{n+1}{2(n-1)}}(X)}+\|u_1\|_{\dot H^{\frac{n+1}{2(n-1)}-1}(X)}.
\end{split}
\end{equation}
Before proving this, we first prove
\begin{equation}\label{equ:ne}
\begin{split}
\|\LL_0^{-\frac1{n-1}}u(t,z)\|_{L^2(\R;L^{\frac{2(n-1)}{n-3}}(X))}
\lesssim \|u_0\|_{\dot H^{\frac{n-3}{2(n-1)}}(X)}+\|u_1\|_{\dot H^{\frac{n-3}{2(n-1)}-1}(X)}.
\end{split}
\end{equation}
Indeed, it follows from Corollary \ref{cor:inh} and Proposition \ref{prop:loc} with $\beta=(n-2)/(n-1)$ $(n\geq 4)$ that
\begin{equation*}
\begin{split}
&\Big\|\LL_0^{-\frac1{n-1}}\int_0^t\frac{\sin{(t-\tau)\sqrt{\LL_0}}}
{\sqrt{\LL_0}}(V(z)u(\tau,z))d\tau\Big\|_{L^2(\R;L^{\frac{2(n-1)}{n-3}}(X))}\\
&\lesssim \|V(z)u(\tau,z)\|_{L^{2}_t{L}^{\frac{2(n-1)}{n+1},2}_z}\lesssim \|r^{-\frac{n-2}{n-1}}u(\tau,z)\|_{L^{2}_t{L}^{2}_z}
\\&\lesssim \|u_0\|_{\dot H^{\frac{n-3}{2(n-1)}}(X)}+\|u_1\|_{\dot H^{\frac{n-3}{2(n-1)}-1}(X)}.
\end{split}
\end{equation*}
Hence this shows \eqref{equ:ne}. On the other hand, from Proposition \ref{P1}, we have shown that the operator
$$\LL_0^{\frac1{n-1}}\LL_V^{-\frac1{n-1}}:  L^{\frac{2(n-1)}{n+1}}(X)\to L^{\frac{2(n-1)}{n+1}}(X), \quad \nu_0>1/(n-1)$$
is bounded.
Note that the operators $\LL_V$ and $\LL_0$ are self-adjoint,  by dual argument, we see the boundedness of the operator
 $$\LL_V^{-\frac1{n-1}}\LL_0^{\frac1{n-1}}: L^{\frac{2(n-1)}{n-3}}(X)\to L^{\frac{2(n-1)}{n-3}}(X). $$
 Therefore we obtain
\begin{equation*}
\begin{split}
&\|u(t,z)\|_{L^2(\R;L^{\frac{2(n-1)}{n-3}}(X))}=\|\LL_V^{-\frac1{n-1}}\LL_0^{\frac1{n-1}}\LL_0^{-\frac1{n-1}}\LL_V^{\frac1{n-1}}u(t,z)\|_{L^2(\R;L^{\frac{2(n-1)}{n-3}}(X))}
\\&\lesssim \|\LL_V^{\frac1{n-1}} u_0\|_{\dot H^{\frac{n-3}{2(n-1)}}(X)}+\|\LL_V^{\frac1{n-1}} u_1\|_{\dot H^{\frac{n-3}{2(n-1)}-1}(X)}
\\&\lesssim \|u_0\|_{\dot H^{\frac{n+1}{2(n-1)}}(X)}+\|u_1\|_{\dot H^{\frac{n+1}{2(n-1)}-1}(X)}.
\end{split}
\end{equation*}
This gives \eqref{equ:en}.

Let $s_0=(n+1)/2(n-1)$
and apply the operator $\LL_V^{(s-s_0)/2}$ with $s_0\leq s<\frac12+\nu_0$
to the wave equation, thus by using the above Strichartz estimate, we obtain
\begin{equation*}
\begin{split}
\|\LL_V^{(s-s_0)/2} u(t,z)\|_{L^2(\R;L^{\frac{2(n-1)}{n-3}}(X))}\lesssim \|u_0\|_{\dot H^{s}(X)}+\|u_1\|_{\dot H^{s-1}(X)}.
\end{split}
\end{equation*}
Consider $(2,\frac{2n}{n-2s-1})\in \Lambda_s$  with $s_0\leq s<\frac12+\nu_0$. One can verify that $0\leq s-s_0<\min\{2, 1+\nu_0\}$ and
$\frac{2n}{n-2s-1}$ satisfies that \eqref{est:sobolev_hyp}.
By the Sobolev inequality in Corollary \ref{P:sobolev}, we show
\begin{equation*}
\begin{split}
\| u(t,z)\|_{L^2(\R;L^{\frac{2n}{n-2s-1}}(X))}&\lesssim \|\LL_V^{(s-s_0)/2} u(t,z)\|_{L^2(\R;L^{\frac{2(n-1)}{n-3}}(X))}\\
&\lesssim \|u_0\|_{\dot H^{s}(X)}+\|u_1\|_{\dot H^{s-1}(X)}.
\end{split}
\end{equation*}
In sum, under the condition $\nu_0>1/(n-1)$, we have proved the Strichartz estimate \eqref{stri} with $F=0$ for all  $(q,\rr)\in \Lambda_s$ with $s\in [0, 1/2+\nu_0)$.\vspace{0.2cm}

 {\bf Case VI:} $\nu_0> \frac1{n-1}$ and $(q,\rr)\in\Lambda_{s,\nu_0}$ with $s\in [1/2+\nu_0, 1+\nu_0)$. For $s\in [1/2+\nu_0, 1+\nu_0)$ and $(q,\rr)\in\Lambda_s\cap \{(q,\rr):\frac1\rr>\frac12-\frac{1+\nu_0}n\}$, as using the Sobolev inequality of Corollary \ref{P:sobolev'} as before,
 we have that there exists a pair $(q,\tilde{\rr})\in \Lambda_{\tilde{s}}$ with $\tilde{s}=(1/2+\nu_0)_-$ such that
 \begin{equation*}
\begin{split}
\| u(t,z)\|_{L^q(\R;L^{\rr}(X))}&\lesssim \|\LL_V^{(s-\tilde{s})/2} u(t,z)\|_{L^q(\R;L^{\tilde{\rr}}(X))}\\
&\lesssim \|u_0\|_{\dot H^{s}(X)}+\|u_1\|_{\dot H^{s-1}(X)}.
\end{split}
\end{equation*}
Thus we prove the homogeneous Strichartz estimate stated in Theorem \ref{thm:Strichartz}.
We show the inhomogeneous Strichartz estimate by using $TT^*$-method as in Subsection \ref{4.5}.  Therefore we complete the proof of the second conclusion in Theorem \ref{thm:Strichartz}.\vspace{0.2cm}

\subsection{The sharpness of the restriction \eqref{Ls}  }

In this subsection, we construct a counterexample to claim the restriction \eqref{Ls} is necessary for Theorem \ref{thm:Strichartz}.

\begin{proposition}[Counterexample]If $(q,\rr)\in \Lambda_s$ but  $(q,\rr)\notin \{(q,\rr):\frac1\rr>\frac12-\frac{1+\nu_0}n\}$. \vspace{0.2cm}
Then the Strichartz estimate possibly fails.

\end{proposition}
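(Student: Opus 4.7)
The plan is to build an explicit family of initial data that saturates the borderline behavior at the cone tip coming from the lowest angular mode. Specifically, for any $(q,\rr)\in\Lambda_s$ violating \eqref{Ls}, I will exhibit $(u_0,u_1)\in\dot H^s\times\dot H^{s-1}$ for which the solution has infinite $L^q_tL^\rr_z$ norm. Let $\varphi_{\nu_0}\in L^2(Y)$ be an $L^2$-normalized eigenfunction of $\widetilde\Delta_h$ with eigenvalue $\lambda_{\nu_0}=\nu_0^2-(n-2)^2/4$, pick a nonnegative bump $\chi\in\CC_c^\infty((1/2,2))$, and take
\begin{equation*}
u_1\equiv0,\qquad u_0(r,y)=\varphi_{\nu_0}(y)\,\mathcal{H}_{\nu_0}[\chi](r).
\end{equation*}
By the separation-of-variables formula \eqref{s.exp} (applied in the $\nu_0$-block), the solution is $u(t,r,y)=\varphi_{\nu_0}(y)\,\mathcal{H}_{\nu_0}[\cos(t\rho)\chi(\rho)](r)$, which stays entirely inside $\mathcal{H}^{\nu_0}$ for all $t$.

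Next I will verify that the initial data has finite Sobolev norm. Since $\LL_V^{s/2}$ acts on the $\nu_0$-block as $\mathcal{H}_{\nu_0}[\rho^s\,\cdot\,]\mathcal{H}_{\nu_0}$, the Hankel--Plancherel identity gives
\begin{equation*}
\|u_0\|_{\dot H^s}^2=\int_0^\infty \rho^{2s}|\chi(\rho)|^2\rho^{n-1}\,d\rho<\infty\qquad\text{for every }s\in\R,
\end{equation*}
so $(u_0,u_1)$ is an admissible test pair at any regularity level.

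The core of the argument will then be a lower bound on $u$ near the cone tip. For $r$ small and $\rho\in\mathrm{supp}\,\chi$, the small-argument expansion \eqref{Bessel} yields
\begin{equation*}
J_{\nu_0}(r\rho)=\frac{(r\rho)^{\nu_0}}{2^{\nu_0}\Gamma(\nu_0+1)}\bigl(1+O((r\rho)^2)\bigr).
\end{equation*}
Plugging this into the Hankel integral produces
\begin{equation*}
\mathcal{H}_{\nu_0}[\cos(t\rho)\chi(\rho)](r)=c_{\nu_0}\,r^{1-\frac n2+\nu_0}\,F(t)+O\bigl(r^{3-\frac n2+\nu_0}\bigr),
\end{equation*}
where $F(t)=\int_0^\infty \rho^{\nu_0+\frac n2}\cos(t\rho)\chi(\rho)\,d\rho$ is continuous and $F(0)>0$, hence $|F(t)|\geq c_0>0$ on some interval $|t|\leq t_0$. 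Choosing a positive-measure set $E\subset Y$ on which $|\varphi_{\nu_0}|\geq c$, for $r\leq r_0$ small one obtains $|u(t,r,y)|\gtrsim r^{1-n/2+\nu_0}$, and therefore
\begin{equation*}
\|u(t)\|_{L^\rr(X)}^\rr\gtrsim \int_0^{r_0} r^{\rr(1-\frac n2+\nu_0)+n-1}\,dr,
\end{equation*}
which diverges exactly when $\rr(1-\tfrac n2+\nu_0)+n\le 0$, i.e.\ when $1/\rr\le 1/2-(1+\nu_0)/n$. Since this holds uniformly on $|t|\le t_0$, we conclude $\|u\|_{L^q_tL^\rr_z}=\infty$, contradicting the Strichartz estimate.

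The only delicate point is making the leading-order Bessel asymptotic quantitative on the support of $\chi$ with a uniform error, but this is immediate from the integral representation in \eqref{Bessel}; the rest of the argument reduces to bookkeeping of the scaling exponent. I expect no genuine obstacles, and the proof confirms that the appearance of the exponent $1-n/2+\nu_0$ --- governed by the smallest eigenvalue of $\widetilde\Delta_h+(n-2)^2/4$ --- is the exact mechanism making \eqref{Ls} sharp.
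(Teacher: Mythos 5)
Your construction is correct and is essentially the paper's own argument: both take data built from $\mathcal{H}_{\nu_0}$ applied to a compactly supported frequency bump in the lowest mode, use the small-argument asymptotics $J_{\nu_0}(r\rho)\sim c(r\rho)^{\nu_0}$ to show the solution behaves like $r^{1-\frac n2+\nu_0}$ near the cone tip for a fixed short time interval, and conclude that the spatial $L^{\rr}$ norm diverges precisely when $1/\rr\le 1/2-(1+\nu_0)/n$. Your variant, which tensors with the eigenfunction $\varphi_{\nu_0}(y)$ explicitly and shows the norm is outright infinite by a pointwise lower bound on $r\le r_0$, is a mild streamlining of the paper's computation of the norm on $[\epsilon,1]$ followed by letting $\epsilon\to0$, and if anything is slightly more careful about placing the data in the $\nu_0$-block.
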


\begin{proof}  Assume $u_0=(\mathcal{H}_{\nu_0}\chi)(s)$ is independent of $y$, where $\chi\in\CC_c^\infty([1,2])$
is valued in $[0,1]$. Due to the compact support of  $\chi$ and the
unitarity of the Hankel transform $\mathcal{H}_{\nu_0}$ on $L^2$, we obtain $\|u_0\|_{\dot H^s}\leq C$. Now we conclude that
\begin{equation}
\begin{split} \|e^{it\sqrt{\mathcal{L}_V}}u_0\|_{L^q(\R;L^\rr(X))}=\infty\end{split}
\end{equation}
when $\frac1\rr\leq\frac12-\frac{1+\nu_0}n$. We write that
\begin{equation}
\begin{split} e^{it\sqrt{\mathcal{L}_V}}u_0&=\int_0^\infty(s\rho)^{-\frac{n-2}2}J_{\nu_0}(s\rho)e^{
it\rho}(\mathcal{H}_{\nu_0}u_0)(\rho)\rho^{n-1} d\rho\\
&=\int_0^\infty(s\rho)^{-\frac{n-2}2}J_{\nu_0}(s\rho)e^{
it\rho}\chi(\rho)\rho^{n-1}d\rho.
\end{split}
\end{equation}
We recall the behavior of  $J_\nu(r)$ as $r\to 0+$. For the complex number $\nu$ with $\mathrm{Re}(\nu)>-1/2$, see \cite[Section B.6]{G}, then we have that
\begin{equation}\label{Bessel1}
J_{\nu}(r)=\frac{r^\nu}{2^\nu\Gamma(\nu+1)}+S_\nu(r)
\end{equation}
where
\begin{equation}\label{Bessel2}
S_{\nu}(r)=\frac{(r/2)^{\nu}}{\Gamma\left(\nu+\frac12\right)\Gamma(1/2)}\int_{-1}^{1}(e^{isr}-1)(1-s^2)^{(2\nu-1)/2}ds
\end{equation}
satisfies
\begin{equation}\label{Bessel3}
|S_{\nu}(r)|\leq \frac{2^{-\mathrm{Re}\nu}r^{\mathrm{Re}\nu+1}}{(\mathrm{Re}\nu+1)|\Gamma(\nu+\frac12)|\Gamma(\frac12)}.\end{equation}
Now we compute for any $0<\epsilon\ll 1$
\begin{align*}
\|e^{it\sqrt{\mathcal{L}_V}}u_0\|_{L^q(\R;L^\rr)}
=&\left\|\int_0^\infty(s\rho)^{-\frac{n-2}2}J_{\nu_0}(s\rho)e^{
it\rho}\chi(\rho)\rho^{n-1}d\rho\right\|_{L^q(\R;L^\rr)}\\
\geq&\left\|\int_0^\infty(s\rho)^{-\frac{n-2}2}J_{\nu_0}(s\rho)e^{
it\rho}\chi(\rho)\rho^{n-1}d\rho\right\|_{L^q([0,1/2];L^\rr_{s^{n-1}ds}[\epsilon,1])}\\
\geq& c\left\|\int_0^\infty(s\rho)^{-\frac{n-2}2}(s\rho)^{\nu_0}e^{
it\rho}\chi(\rho)\rho^{n-1}d\rho\right\|_{L^q([0,1/2];L^\rr_{s^{n-1}ds}[\epsilon,1])}\\
&-\left\|\int_0^\infty(s\rho)^{-\frac{n-2}2}S_{\nu_0}(s\rho)e^{
it\rho}\chi(\rho)\rho^{n-1}d\rho\right\|_{L^q([0,1/2];L^\rr_{s^{n-1}ds}[\epsilon,1])}.
\end{align*}
We first observe that by \eqref{Bessel3}
\begin{equation}
\begin{split}
&\left\|\int_0^\infty(s\rho)^{-\frac{n-2}2}S_{\nu_0}(s\rho)e^{
it\rho}\chi(\rho)\rho^{n-1}d\rho\right\|_{L^q([0,1/2];L^\rr_{s^{n-1}ds}[\epsilon,1])}\\
\leq& C\left\|\int_0^\infty(s\rho)^{-\frac{n-2}2}(s\rho)^{\nu_0+1}\chi(\rho)\rho^{n-1}d\rho\right\|_{L^q([0,1/2];L^\rr_{s^{n-1}ds}[\epsilon,1])}\\
\leq& C\max\big\{\epsilon^{\nu_0+1-\frac{n-2}2+\frac n\rr},1\big\}.
\end{split}
\end{equation}
Next we estimate the lower boundedness
\begin{align*}
&\left\|\int_0^\infty(s\rho)^{-\frac{n-2}2}(s\rho)^{\nu_0}e^{
it\rho}\chi(\rho)\rho^{n-1}d\rho\right\|_{L^q([0,1/4];L^\rr_{s^{n-1}ds}[\epsilon,1])}\\
=&C_{\nu_0}\left(\int_0^{\frac14}\left( \int_{\epsilon}^1 \left|\int_0^\infty(s\rho)^{-\frac{n-2}2}(s\rho)^{\nu_0}e^{
it\rho}\chi(\rho)\rho^{n-1}d\rho\right|^\rr s^{n-1}ds\right)^{q/\rr}dt\right)^{1/q}\\
\geq&C_{\nu_0}\left(\int_0^{\frac14}\left|\int_0^\infty\rho^{-\frac{n-2}2}\rho^{\nu_0}e^{
it\rho}\chi(\rho)\rho^{n-1}d\rho\right|^{q}dt\right)^{1/q}\times \begin{cases}\epsilon^{\nu_0-\frac{n-2}2+\frac n\rr} \quad\text{if}\quad \frac1\rr<\frac12-\frac{\nu_0+1}{n}\\
\ln\epsilon \quad\text{if}\quad \frac1\rr=\frac12-\frac{\nu_0+1}{n}
\end{cases}\\
\geq& c\begin{cases}\epsilon^{\nu_0-\frac{n-2}2+\frac n\rr} \quad\text{if}\quad \frac1\rr<\frac12-\frac{\nu_0+1}{n};\\
\ln\epsilon \quad\text{if}\quad \frac1\rr=\frac12-\frac{\nu_0+1}{n}
\end{cases}
\end{align*}
where we have used the fact that $\cos(\rho t)\geq 1/2$ for $t\in [0, 1/4]$ and $\rho\in [1,2]$, and
\begin{equation}
\begin{split}
\left|\int_0^\infty\rho^{-\frac{n-2}2}\rho^{\nu_0}e^{
it\rho}\chi(\rho)\rho^{n-1}d\rho\right|\geq \frac12\int_0^\infty\rho^{-\frac{n-2}2}\rho^{\nu_0}\chi(\rho)\rho^{n-1}d\rho\geq c.
\end{split}
\end{equation}
Hence, we obtain if $\frac1\rr<\frac12-\frac{\nu_0+1}n$
\begin{equation}
\begin{split}
\|e^{it\sqrt{\mathcal{L}_V}}u_0\|_{L^q(\R;L^\rr)}&\geq c\epsilon^{\nu_0-\frac{n-2}2+\frac n\rr}-C\max\big\{\epsilon^{\nu_0+1-\frac{n-2}2+\frac n\rr},1\big\}
\\&\geq c\epsilon^{\nu_0-\frac{n-2}2+\frac n\rr}\to +\infty \quad \text{as}\quad \epsilon\to 0
\end{split}
\end{equation}
 And when $\frac1\rr=\frac12-\frac{\nu_0+1}n$, we get
\begin{align*}
\|e^{it\sqrt{\mathcal{L}_V}}u_0\|_{L^q(\R;L^\rr)}&\geq c\ln\epsilon-C\to +\infty \quad \text{as}\quad \epsilon\to 0.
\end{align*}

\end{proof}

\section{Applications: well-posedness and scattering theory }
In this section, we prove Theorem \ref{thm:NLW} by using the Strichartz estimates established in Theorem \ref{thm:Strichartz}.
We follow the standard Banach fixed point argument to prove this
result. For any small constant $\epsilon>0$, let $I=[0,T)$, there exists $T>0$
such that
\begin{align*}
B_I:=\big\{&u\in C(I,\dot{H}^1):~\|(u,\partial_t u)\|_{\dot{H}^1\times L^2}\leq 2C\|(u_0,u_1)\|_{\dot{H}^1\times L^2},
\|u\|_{L_t^qL_z^{\rr}(I\times X)}\leq 2C\epsilon\big\}.
\end{align*}
To this end, we consider the map
\begin{equation}\label{inte3}
\Phi(u(t))=\dot{K}(t)u_0+K(t)u_1+\int_{0}^tK(t-s)(|u|^{\frac{4}{n-2}}u(s))ds
\end{equation}
on the complete metric space $B_I$
with the metric
$d(u,v)=\big\|u-v\big\|_{L_t^{q}L_z^{\rr}(I\times X)}$ and where the pair $(q,\rr)$ is given by \eqref{q-r}. We can check that $(q,\rr)\in \Lambda_1$.
On the other hand, we observe that
if the initial data has small enough size $\delta$, then by Strichartz estimate
\begin{equation}\label{small}
\big\|\dot{K}(t)u_0+K(t)u_1\big\|_{L_t^{q}L_z^{\rr}(I\times X)}\leq C \epsilon
\end{equation}
holds for $T=\infty$; if not, the inequality holds for some small $T>0$ by the dominated convergence theorem.
We need to
prove that the operator $\Phi$ defined by \eqref{inte3}
is well-defined on $B_I$ and is a contraction map under the metric $d$
for $I$.\vspace{0.1cm}

Let $u\in B_I$ with $0<\epsilon\ll 1$.
We first consider $3\leq n\leq 6$. Then, we have by Strichartz estimate
\begin{align*}
&\|\Phi(u(t))\|_{L_t^{(n+2)/(n-2)}L_z^{2(n+2)/(n-2)}(I\times X)}\\ &\leq\big\|\dot{K}(t)u_0+K(t)u_1\big\|_{L_t^{(n+2)/(n-2)}L_z^{2(n+2)/(n-2)}(I\times X)}+C\big\||u|^{\frac4{n-2}}u\big\|_{L_t^1L_z^2(I\times X)}\\
&\leq C\epsilon+\|u\|_{L_t^{(n+2)/(n-2)}L_z^{2(n+2)/(n-2)}(I\times X)}^{\frac{n+2}{n-2}}
\leq 2 C\epsilon,
\end{align*}
and
\begin{align*}
\sup_{t\in I}\big\|(\Phi(u),\partial_t\Phi(u))\big\|_{\dot{H}^1\times L^2}\leq&C\big\|(u_0,u_1)\|_{\dot{H}^1\times L^2}+C\big\||u|^{\frac4{n-2}}u\big\|_{L_t^1L_z^2(I\times X)}\\
\leq&2C\big\|(u_0,u_1)\|_{\dot{H}^1\times L^2}.
\end{align*}
Next we consider the case $n\geq 7$. By using the Strichartz estimate again, we show
\begin{align*}
&\|\Phi(u(t))\|_{L_t^{2}L_z^{2n/(n-3)}(I\times X)}\\ &\leq\big\|\dot{K}(t)u_0+K(t)u_1\big\|_{L_t^{2}L_z^{2n/(n-3)}}+C\big\||u|^{\frac4{n-2}}u\big\|_{L_t^{2(n-2)/(n+2)}L_z^{2n(n-2)/(n-3)(n+2)}}\\
&\leq C\epsilon+\|u\|_{L_t^2 L_z^{2n/(n-3)}(I\times X)}^{\frac{n+2}{n-2}}
\leq 2C\epsilon,
\end{align*}
and
\begin{align*}
&\sup_{t\in I}\big\|(\Phi(u),\partial_t\Phi(u))\big\|_{\dot{H}^1\times L^2}\\ &\leq C\big\|(u_0,u_1)\|_{\dot{H}^1\times L^2}+C\big\||u|^{\frac4{n-2}}u\big\|_{L_t^{2(n-2)/(n+2)}L_z^{2n(n-2)/(n-3)(n+2)}(I\times X)}\\
&\leq2C\big\|(u_0,u_1)\|_{\dot{H}^1\times L^2}.
\end{align*}
Hence for $n\geq3$ we have $\Phi(u)\in B_I$. On the other hand,  for $\omega_1, \omega_2\in B_I$, by Strichartz
estimate and choosing $\epsilon$ sufficiently small, we obtain for $3\leq n\leq 6$
\begin{align*}
&d\big(\Phi(w_1),\Phi(w_2)\big)\\ &\leq C\big\||w_1|^{\frac{4}{n-2}}w_1-|w_2|^{\frac{4}{n-2}}w_2\big\|_{L_t^1L_z^2(I\times X)}\\
&\leq C\|w_1-w_2\|_{L_t^{(n+2)/(n-2)}L_z^{2(n+2)/(n-2)}}\big(\|w_1\|_{L_t^{(n+2)/(n-2)}L_z^{2(n+2)/(n-2)}}^{\frac{4}{n-2}}+\|w_2\|_{L_t^{(n+2)/(n-2)}L_z^{2(n+2)/(n-2)}}^{\frac{4}{n-2}}\big)\\
&\leq \tilde{C}\epsilon^{\frac{4}{n-2}}d(w_1,w_2)\leq\tfrac12d(w_1,w_2),
\end{align*}
and for $n\geq 7$
\begin{align*}
&d\big(\Phi(w_1),\Phi(w_2)\big)\\ &\leq C\big\||w_1|^{\frac{4}{n-2}}w_1-|w_2|^{\frac{4}{n-2}}w_2\big\|_{L_t^{2(n-2)/(n+2)}L_z^{2n(n-2)/(n-3)(n+2)}}\\
&\leq C\|w_1-w_2\|_{L_t^{2}L_z^{2n/(n-3)}}\big(\|w_1\|_{L_t^{2}L_z^{2n/(n-3)}}^{\frac{4}{n-2}}+\|w_2\|_{L_t^{2}L_z^{2n/(n-3)}}^{\frac{4}{n-2}}\big)\\
&\leq \tilde{C}\epsilon^{\frac{4}{n-2}}d(w_1,w_2)\leq\tfrac12d(w_1,w_2),
\end{align*}
The standard fixed point argument  gives a unique solution $u$ of
\eqref{equ:energy-critical} on $I\times X$ which satisfies the bound
\eqref{local-b}. Therefore if $\delta$ is small enough, we obtain the global solution; otherwise, we have the local existence.\vspace{0.1cm}

Next, we turn to show the scattering result.
We just prove that $u$ scatters at $+\infty$, the proof for the
scattering at $-\infty$ is similar.  Using Duhaml's formula, the
solution with initial data $(u(0),\dot{u}(0))=(u_0,u_1)\in \dot{H}^1\times
L^2$ of \eqref{equ:energy-critical} can be written as
\begin{equation}
{u(t)\choose\dot{u}(t)}=V_0(t){u_0 \choose
u_1}-\int^{t}_{0}V_0(t-s){0\choose F(u(s))}ds,
\end{equation}
where $V_0$ is defined by \eqref{v0tdefin}. Denote the scattering data
$(u_0^+,u_1^+)$ by
$${u_0^+\choose u_1^+}={u_0\choose u_1}-\int_0^\infty V_0(-s){0\choose F(u(s))}ds.$$
Then, by Strichartz estimate,  we can
obtain for $3\leq n\leq 6$
\begin{align*}
\Big\|{u\choose \dot{u}}-V_0(t){u_0^+\choose u_1^+}\Big\|_{\dot H^1\times L^2}&=\Big\|\int_t^\infty V_0(t-s){0\choose F(u(s))}ds\Big\|_{\dot H^1\times L^2}\\
&\lesssim\|(|u|^{\frac{4}{n-2}})u\|_{L_t^1L_z^2((t,\infty)\times X)}\\
&\lesssim\|u\|_{L_t^{(n+2)/(n-2)}L_z^{2(n+2)/(n-2)}((t,\infty)\times X)}^{\frac{4}{n-2}}\\
&\rightarrow 0,\quad \text{as}\quad t\rightarrow\infty.
\end{align*}
and for $n\geq7$
\begin{align*}
\Big\|{u\choose \dot{u}}-V_0(t){u_0^+\choose u_1^+}\Big\|_{\dot H^1\times L^2}&=\Big\|\int_t^\infty V_0(t-s){0\choose F(u(s))}ds\Big\|_{\dot H^1\times L^2}\\
&\lesssim\|(|u|^{\frac{4}{n-2}})u\|_{L_t^{2(n-2)/(n+2)}L_z^{2n(n-2)/(n-3)(n+2)}}\\
&\lesssim\|u\|_{L_t^{2}L_z^{2n/(n-3)}((t,\infty)\times X)}^{\frac{4}{n-2}}\\
&\rightarrow 0,\quad \text{as}\quad t\rightarrow\infty.
\end{align*}
Thus we prove that $u$ scatters.

\begin{center}

\end{center}
\end{document}